\newcommand{\mode}{0}
\definecolor{weblmcolor}{cmyk}{0.86,0.23,0.44,0.02}
\numberwithin{equation}{section}
\newtheorem{theorem}{Theorem}[section]
\newtheorem{proposition}[theorem]{Proposition}
\newtheorem{lemma}[theorem]{Lemma}
\newtheorem{corollary}[theorem]{Corollary}
\theoremstyle{definition}
\newtheorem{remark}[theorem]{Remark}
\newtheorem{example}[theorem]{Example}
\newtheorem{definition}[theorem]{Definition}
\newtheorem{hypothesis}{Hypothesis}
\newcommand{\sethypothesistag}[1]{
  \let\oldthehypothesis\thehypothesis
  \renewcommand{\thehypothesis}{#1}
  \g@addto@macro\endhypothesis{
    \addtocounter{hypothesis}{-1}
    \global\let\thehypothesis\oldthehypothesis}
}
 \pgfplotsset{
     tick align=outside,
     x grid style={white},
     xmajorgrids,
     y grid style={white},
     ymajorgrids,
     axis line style={white},
     axis background/.style={fill=white!92!black},
     legend style={draw=white, fill=white},
     legend cell align={left}
 }
\newcommand{\ee}{\mathrm{e}} 
\newcommand{\dd}{\, \mathrm{d}} 
\newcommand{\var}{\varepsilon}
\newcommand{\N}{{\mathbb N}}
\newcommand{\R}{{\mathbb R}}
\newcommand{\T}{{\mathbb T}}
\newcommand{\E}{{\mathbb E}}
\renewcommand{\S}{{\mathbb S}}
\newcommand{\sD}{\mathsf D}
\newcommand{\sV}{\mathsf V}
\newcommand{\sU}{\mathsf U}
\newcommand{\cR}{\mathcal R}
\newcommand{\cL}{\mathcal L}
\newcommand{\cT}{\mathcal T}
\newcommand{\cS}{\mathcal S}
\newcommand{\cA}{\mathcal A}
\newcommand{\cB}{\mathcal B}
\newcommand{\cC}{\mathcal C}
\newcommand{\cD}{\mathcal D}
\newcommand{\cG}{\mathcal G}
\newcommand{\cZ}{\mathcal Z}
\newcommand{\ip}[2]{\left\langle{#1},{#2}\right\rangle}
\newcommand{\vAvg}[1]{\langle{#1}\rangle} 
\newcommand{\gAvg}[1]{{\langle\!\langle}{#1}{\rangle\!\rangle}} 
\DeclareMathOperator{\range}{Range}
\DeclareMathOperator{\domain}{Domain}
\newcommand{\supp}{\operatorname{supp}} 
\newcommand{\indicator}{\mathbf{1}} 
\newcommand{\init}{\mathrm{in}} 
\newcommand{\sym}{\mathrm{\tiny sym}}
\newcommand{\anti}{\mathrm{\tiny anti}}
\newcommand{\strans}{\cS^{\textnormal{trans}}}
\newcommand{\sfull}{\cS^{\textnormal{full}}}
\newcommand{\hypref}[1]{\hyperref[#1]{\normalfont\textbf{(H\ref*{#1})}}}
\renewcommand{\vec}[1]{\mathbf{#1}}
\begin{document}

\ifnum\mode>0
  \begin{Frontmatter}
    \title[Quantitative Geometric Control in Linear Kinetic Theory]
    {Quantitative Geometric Control in Linear Kinetic Theory}
    \author[1]{Helge Dietert}\orcid{0000-0002-2071-9202}
    \author[2]{Frédéric Hérau}\orcid{0000-0001-9832-7105}
    \author[3]{Harsha Hutridurga}
    \author[4]{Clément Mouhot}\orcid{ORCID 0000-0002-4755-5404}
    \authormark{Helge Dietert \textit{et al}.}

    \address[1]{\orgname{Université Paris Cité and Sorbonne
        Université, CNRS}, \orgaddress{IMJ-PRG, F-75006 Paris,
        France}, \email{helge.dietert@imj-prg.fr}}

    \address[2]{\orgname{Laboratoire de Mathématiques Jean Leray,
        Nantes Université}, \orgaddress{2 rue de la Houssinière, BP
        92208 F-44322 Nantes Cedex 3, France},
      \email{frederic.herau@univ-nantes.fr}}

    \address[3]{\orgname{Department of Mathematics, Indian Institute
        of Technology Bombay}, \orgaddress{Powai, Mumbai 400076,
        India}, \email{hutri@math.iitb.ac.in}}

    \address[4]{\orgname{Department of Pure
        Mathematics and Mathematical Statistics, University of
        Cambridge}, \orgaddress{CMS, Wilberforce Road, CB3 0WA
        Cambridge, UK}, \email{cmouhot@dpmms.cam.ac.uk}}

    \received{** *** 2023}
    \keywords{Hypocoercivity; spectral gap; kinetic theory; linear
      Boltzmann; BGK operator; Fokker-Planck; hypoellipticity;
      controllability; Stokes inequality; Korn inequality; Bogovoskiǐ
      operator}
    \keywords[MSC Codes]{\codes[Primary]{35B40, 76P05, 82C40, 82C70}; \codes[Secondary]{93C20}}
    \abstract{  We consider general linear kinetic equations combining transport and a
      linear collision on the kinetic variable with a spatial weight that
      can vanish on part of the domain. The considered transport operators
      include external potential forces and boundary conditions, e.g.\
      specular, diffusive and Maxwell conditions. The considered collision
      operators include the linear relaxation (scattering) and the
      Fokker-Planck operators and the boundary conditions include
      specular, diffusive and Maxwell conditions. We prove quantitative
      estimates of exponential stabilisation (spectral gap) under a
      \emph{geometric control condition}. The argument is new and relies
      entirely on trajectories and weighted functional inequalities on the
      divergence operators. The latter functional inequalities are of
      independent interest and imply quantitatively weighted Stokes and
      Korn inequalities.  We finally show that uniform control conditions
      are not always necessary for the existence of a spectral gap when
      the equation is hypoelliptic, and prove weaker control conditions in
      this case.}
  \end{Frontmatter}
  \localtableofcontents
\else
  \title{Quantitative Geometric Control in Linear Kinetic Theory}

  \author[Dietert]{Helge Dietert} \address{Université Paris Cité and
    Sorbonne Université, CNRS\\ IMJ-PRG, F-75006 Paris, France.}
  \email{helge.dietert@imj-prg.fr}

  \author[Hérau]{Frédéric Hérau} \address{Laboratoire de Mathématiques
    Jean Leray, Nantes Université\\ 2 rue de la Houssinière, BP 92208
    F-44322 Nantes Cedex 3, France}
  \email{frederic.herau@univ-nantes.fr}

  \author[Hutridurga]{Harsha Hutridurga} \address{Department of
    Mathematics, Indian Institute of Technology Bombay, Powai,
    Mumbai 400076, India.}  \email{hutri@math.iitb.ac.in}

  \author[Mouhot]{Clément Mouhot} \address{Department of Pure
    Mathematics and Mathematical Statistics, University of
    Cambridge, Wilberforce Road, CB3 0WA Cambridge, UK}
  \email{c.mouhot@dpmms.cam.ac.uk}

  \begin{abstract}
    We consider general linear kinetic equations combining transport and a
    linear collision on the kinetic variable with a spatial weight that
    can vanish on part of the domain. The considered transport operators
    include external potential forces and boundary conditions, e.g.\
    specular, diffusive and Maxwell conditions. The considered collision
    operators include the linear relaxation (scattering) and the
    Fokker-Planck operators and the boundary conditions include
    specular, diffusive and Maxwell conditions. We prove quantitative
    estimates of exponential stabilisation (spectral gap) under a
    \emph{geometric control condition}. The argument is new and relies
    entirely on trajectories and weighted functional inequalities on the
    divergence operators. The latter functional inequalities are of
    independent interest and imply quantitatively weighted Stokes and
    Korn inequalities.  We finally show that uniform control conditions
    are not always necessary for the existence of a spectral gap when
    the equation is hypoelliptic, and prove weaker control conditions in
    this case.
  \end{abstract}
  \date{\today}
  \subjclass[2010]{Primary: 35B40, 76P05, 82C40, 82C70. Secondary: 93C20}
  \maketitle
  \tableofcontents
\fi

\section{Introduction}

\subsection{Summary}

This manuscript is part of a novel recent development in kinetic
theory, namely the control theory of hypocoercive structures (with or
without hypoellipticity). We consider a general class of linear
kinetic equations as prototypes of such structures. They combine a
degenerate relaxation (in $v$) with a first-order transport dynamics,
and with the addition of a further degeneracy of the relaxation along
the spatial variable: this corresponds to a \emph{thermalisation
  degeneracy}. A natural question in control theory, that of
\emph{exponential stabilisation}, is to determine conditions upon the
thermalising region under which a spectral gap can be obtained.

Our main contribution is to provide a quantitative exponential
stability estimate under a general control condition, and for a large
class of operators, boundary conditions and potential
confinements. Moreover, we obtain quantitative exponential stability
for a hypoelliptic equation in a critical case where standard control
conditions fail, by exploiting the hypoellipticity. Our results are
new for most concrete equations considered, apart from the linear
relaxation without boundary conditions. The general approach is also
new.

We first present our ideas in an abstract result, see the assumptions
in \cref{sec:abstract-assumptions} and the abstract
Theorem~\ref{theo:main}. Before the abstract discussion we present
informally four key examples in \cref{sec:intro:examples}. The
application to some concrete models is proved in \cref{cor:LB}. The
key new feature of our method is to follow trajectories and to use the
divergence inequality (i.e.\ constructing a non-unique inverse to the
divergence in $H^1_0$ with quantitative estimate). Even without
thermalisation degeneracy it leads to novel proofs of hypocoercivity
and covers cases with boundaries and confining potentials that were
not known before. The required divergence inequalities (see
\cref{theo:ineq}) are extensions (with weight and boundary conditions)
of key results of Bogovskii, Bourgain and Brézis. They are proved in
\cref{sec:divergence} and have interest per se.

\subsection{The problem at hand}

We consider the abstract kinetic equation
\begin{align}
  \label{eq:gen}
  \begin{dcases}
    \partial_t f + \cT f = \sigma \cL f
    &\text{in } \Omega \times \sV, \\
    \tilde \gamma_- f = \cR \gamma_+ f &\text{in }
    \partial \Omega \times \sV \text{ with } \vec{n} \cdot v \ge 0,
  \end{dcases}
\end{align}
for a time-dependent probability density $f=f(t,x,v) \ge 0$ on the
phase space $(x,v) \in \Omega \times \sV$, where $\Omega$ is a smooth
open set of $\R^d$ or $\Omega=\T^d$ and $\sV \subset \R^d$. The
transport operator
$\cT := v \cdot \nabla_x - \nabla_x \phi \cdot \nabla_v$ includes an
external potential $\phi : \R^d \to \R$; the linear dissipation
operator $\cL$ acts on $v$ only and models the thermalisation effect
due to collisions with a background medium; and the thermalisation
degeneracy function $\sigma=\sigma(x) \ge 0$ can vanish on part of
$\Omega$.

If $\Omega$ has a boundary, $\partial \Omega$ is assumed to be smooth
and we denote $\vec{n} : \partial \Omega \to \S^{d-1}$ the unit
outgoing normal vector and
$\Gamma_\pm := \{(x,v) \in \partial \Omega \times \sV \ : \pm
(\vec{n} \cdot v) \ge 0\}$. We then denote $\gamma_{\pm} \varphi$ the
trace of $\varphi$ at $\pm (\vec{n} \cdot v) \ge 0$, and
$\tilde \gamma_- \varphi(v) := \varphi (v-2(\vec{n}\cdot v)\vec{n})$
on $\Gamma_+$. Then the boundary conditions in~\eqref{eq:gen} are
defined by the operator (following the spirit of the framework in~\cite{MR432101})
\begin{align}
  \label{eq:boundary}
  (\cR \gamma_+ f)(x,v) := \int_{(\vec{n} \cdot v_*) \ge 0}
  \left( \gamma_+ f \right)(x,v_*)\, R(x,v,v_*)\,
  (\vec{n} \cdot v_*) \dd v_*,
\end{align}
for $(x,v) \in \Gamma_+$, with a measurable kernel
$R=R(x,v,v_*) \ge 0$ preserving mass:
\begin{equation}
  \label{eq:mass-conservation-bdd}
  \forall \, (x,v_*) \in \partial \Omega \times \sV \text{ with }
  (\vec{n} \cdot v_*) \ge 0, \quad
  \int_{(\vec{n} \cdot v) \ge 0} R(x,v,v_*)\,
  (\vec{n} \cdot v) \dd v \equiv 1.
\end{equation}

The question addressed in this paper is the relaxation to
equilibrium $f(t,x,v) \to f_\infty(x,v)$ for large time, and more
specifically whether the linear
evolution~\eqref{eq:gen}-\eqref{eq:boundary} has a spectral
gap.

\subsection{Previous results and contribution}

When $\sigma$ vanish on part of $\Omega$,
\cite{bernard-salvarani-2013-degenerate-linear-boltzmann} proved
exponential relaxation by non-constructive methods when $\cL$ is the
linear Boltzmann operator, $\Omega = \T^d$ is the flat torus,
$\phi=0$, $\sV$ is bounded, and under a \emph{geometric control
  condition} on $\sigma$ inspired from wave
equations~\cite{MR1178650}. The same authors obtained lower bound on
the rate of convergence for similar models in~\cite{MR3048598} when
the geometric control condition
fails. Later~\cite{han-kwan-leautaud-2015-geometric} extended this
non-constructive result to $\phi \not =0$ and unbounded velocities
$\sV = \R^d$. Finally the recent
preprint~\cite{evans-moyano-2019-preprint-quantitative-rates-convergence-equilibrium-degenerate}
gave the first constructive proof, in the same setting, by a
probabilistic approach based on Döblin's theorem. When there is no
thermalisation degeneracy ($\sigma=1$) but $\Omega$ has a boundary,
the paper~\cite{MR2679358} proved exponential relaxation in $L^2$ for
the related linearised Boltzmann equation with specular boundary
conditions by non-constructive compactness arguments, and the
paper~\cite{MR3562318} proved it for the linearised Boltzmann equation
with diffusive boundary conditions by constructive methods. The more
recent
preprint~\cite{bernou-carrapatoso-mischler-tristani-2021-preprint-hypocoercivity-bounded-domain}
finally proved it for specular, diffusive and Maxwell boundary
conditions by extending the constructive method
of~\cite{dolbeault-mouhot-schmeiser-2015-hypocoercivity} to
initial-boundary-value problems, which includes solving specific
difficulties.

Our method here recovers all these previous results (and others) in a
quantitative manner. We propose a novel quantitative approach based on
\emph{trajectories} which, apart from unifying and simplifying
previous works, gives the first quantitative estimates when a
thermalisation degeneracy is combined with a diffusive operator in
velocity, or when a thermalisation degeneracy is combined with
boundary conditions. In the terminology of control theory, we prove
quantitative and unconditional \emph{exponential stabilisation} for
linear kinetic equations and our estimates readily imply the
\emph{unique continuation property} for the set
$\omega:=\supp \sigma$. When $\cL$ is bounded, they also
straightforwardly imply the quantitative \emph{observability} of this
set (see in particular~\eqref{eq:following}); the estimates
established here are however stronger than the latter property. We
also initiate the study of how geometric control conditions and
hypoellipticity interplay (see \cref{ex:hypoelliptic-decay}).

\subsection{Setting and concrete examples}
\label{sec:intro:examples}

The main theorem handles the abstract
equations~\eqref{eq:gen}-\eqref{eq:boundary} in a general
form. Before stating the precise abstract conditions, we discuss the
following key examples in kinetic theory to motivate the general
theory and help the reader gain intuition.

Take for the velocity either \(v \in \sV := \R^d\) with the
equilibrium measure $M(v) = (2\pi)^{-d/2} \ee^{-|v|^2/2}$ or
\(v \in \sV := \S^{d-1}\) with the equilibrium measure
$M(v)=|\S^{d-1}|^{-1}$. Consider then the collision operator to be the
linear Boltzmann operator (also called scattering operator)
\begin{equation}
  \label{eq:LB}
  \cL f(v) := \int_{\sV} \Big[ k(v,v_*) f(v_*) -
  f(v) k(v_*,v) \Big] \dd v_*
\end{equation}
with $0 \le k \in C^0(\sV^2)$ so that $M$ is the only invariant
measure and $\cL$ has a spectral gap in \(L^2(M^{-1})\)
(see~\cref{ss:local-coerc-concrete} for a discussion of
sufficient conditions) or the linear Fokker-Planck operator
\begin{align}
  \label{eq:FP}
  \cL f(v) :=
  \begin{dcases}
    \Delta_{\mathrm{LB}} f
    &\text{if } \sV = \S^{d-1}\\
    \nabla_v \cdot \left( \nabla_v f + v f \right)
    &\text{if } \sV =\R^d
  \end{dcases}
\end{align}
where $\Delta_{\mathrm{LB}}$ denotes the \emph{Laplace-Beltrami
  operator} on $\S^{d-1}$.

We now discuss four paradigmatic examples of increasing
complexity:
\begin{example}[Periodic confinement]\label{ex:decay-torus}
  Assume that $\Omega = \T^d$ and let
  \[
    (x_0,v_0) \mapsto (X_t(x_0,v_0),V_t (x_0,v_0))
  \]
  be the characteristic map starting from $(x_0,v_0)$
  associated to the transport $\cT$. Suppose there exists a good set
  $\Sigma$ so that $\sigma \gtrsim \indicator_{\Sigma}$ and
  $\chi \in C^\infty(\Omega)$, $c,T>0$ with
  $\supp \chi \subset \Sigma$ so that
  \begin{equation}\label{eq:ex:decay-torus:gcc}
    \forall(x_0,v_0) \in \Omega \times \sV,\quad
    \int_0^T \chi(X_t(x_0,v_0)) \dd t \ge c.
  \end{equation}
  Then our method yields exponential convergence with quantitative
  estimate.
\end{example}

In other words, we prove exponential convergence if there exists a set
$\Sigma$ in which thermalisation occurs and which is exposed to all
configuration by the transport flow, i.e.\ for any initial point the
trajectory is thermalised a strictly positive amount in the set
\(\Sigma\) over some time interval \([0,T]\) (heuristically we can
think of \(\chi\) as \(\indicator_{\Sigma}\), omitting technical
regularity requirements on $\chi$). For (nearly) massless particles
like neutrons, the velocities can be modelled by $\sV = \S^{d-1}$ and
without external potential ($\phi \equiv 0$) the
condition~\eqref{eq:ex:decay-torus:gcc} reduces to whether straight
lines hit the good set $\Sigma$, see \cref{fig:torus-gcc}.

\begin{figure}[htb]
  \begin{center}
    \begin{tikzpicture}[scale=1.4]
      \draw[fill, color=black!10!white] (0,0) -- (3,0) -- (3,3) -- (0,3) -- cycle;
      \draw[green, pattern=north west lines, pattern color=green]
      (0,1) -- (0,2) -- (1,2) -- (1,3) -- (2,3) -- (2,2) -- (3,2) --
      (3,1) -- (2,1) -- (2,0) -- (1,0) -- (1,1) -- cycle;
      \draw[very thick] (0,0) -- (3,0) -- (3,3) -- (0,3) -- cycle;
      \draw (3,3) node[anchor=north west] {$\Omega$};
      \draw (1.5,1.5) node {$\sigma \equiv 1$};
      \draw (0.5,0.5) node {$\sigma \equiv 0$};
      \draw[blue,very thick,->] (0.5,2.3) -> (2.5,2.3);
    \end{tikzpicture}
    \hspace{3cm}
    \begin{tikzpicture}[scale=1.4]
      \draw[fill, color=black!10!white] (0,0) -- (3,0) -- (3,3) -- (0,3) -- cycle;
      \draw[green, pattern=north west lines, pattern color=green]
      (0,1) -- (0,2) -- (3,2) -- (3,1) -- cycle;
      \draw[very thick] (0,0) -- (3,0) -- (3,3) -- (0,3) -- cycle;
      \draw (3,3) node[anchor=north west] {$\Omega$};
      \draw (1.5,1.5) node {$\sigma \equiv 1$};
      \draw (0.5,0.5) node {$\sigma \equiv 0$};
      \draw[blue,very thick,->] (0.5,2.3) -> (2.5,2.3);
    \end{tikzpicture}
  \end{center}
  \caption{Illustration of \eqref{eq:ex:decay-torus:gcc} for a
    periodic domain with $\sV = \S^{d-1}$ (massless particles) and no
    external potential $\phi\equiv 0$. On the left, all lines hit and
    spend a controlled fraction of time in the thermalisation set
    $\Sigma = \supp \sigma$ on a time interval $[0,T]$, yielding
    exponential convergence. On the right, there is a set of
    configurations with zero measure whose trajectories never hit
    $\Sigma$, and around this set the time to hit $\Sigma$ can be
    arbitrarily large: the control condition is not satisfied, and one
    typically expects polynomial rate of convergence}
  \label{fig:torus-gcc}
\end{figure}
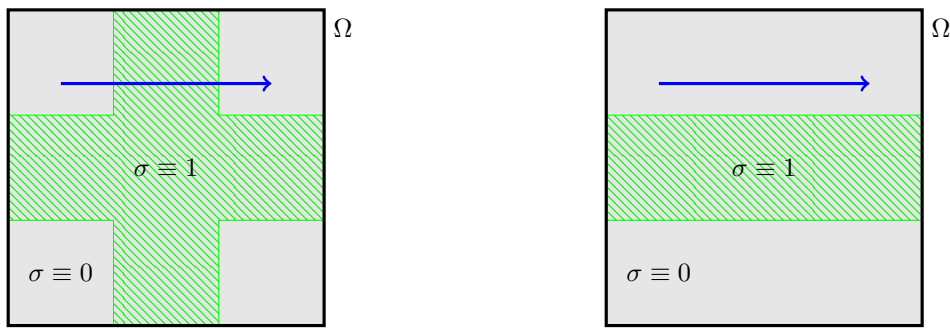

\begin{example}[Boundaries and potential]
  \label{ex:boundaries-potential-gamma2}
  In order to cover boundaries, we face two problems: (i) in the case
  of non-convex geometries the evolution may create discontinuities
  and (ii) with diffusive boundary conditions the transport flow
  $\strans_t$ associated with the transport operator $\cT$ and the
  boundary conditions $\cR$ ceases to admit deterministic
  characteristic trajectories. To overcome these issues, we
  rewrite~\eqref{eq:ex:decay-torus:gcc} as
  \begin{equation}
    \label{eq:gcc-trans}
    \forall(x_0,v_0) \in \Omega \times \sV,\quad
    \int_0^T \int_{\Omega \times \sV}
    (\strans_t\delta_{(x_0,v_0)})(x,v)\,
    \chi(x,v) \dd x \dd v \dd t \ge c >0
  \end{equation}
  and our method applies, once characteristic flow are adequately
  treated more abstractly, as long as the operator $\cL$ is bounded or
  the transport flow propagates enough regularity.

  To overcome the regularity issue when the latter is not satisfied,
  we introduce the full semigroup $\sfull_t$ associated to
  \eqref{eq:gen}. In the stochastic interpretation of the collision
  operator, we have a positive probability that the particle has
  undergone no collision (in the linear Boltzmann
  setting~\eqref{eq:LB}) or the change of velocity has been arbitrary
  small (in the Fokker-Planck setting~\eqref{eq:FP}). Hence, one
  expects in general that \eqref{eq:gcc-trans} implies the same
  condition with $\strans_t$ replaced by $\sfull_t$. Using $\sfull_t$,
  we can replace the regularity condition by a $\Gamma$ condition on
  the collision operator which we can justify for the Fokker-Planck
  operator and all linear Boltzmann operators satisfying
  reversibility.

  As for the boundary conditions, we impose a compatibility condition
  which includes all standard cases. In particular, it holds for the
  specular, diffusive and Maxwell boundary conditions, whose kernel
  in~\eqref{eq:boundary} is
  \begin{align}
    \label{eq:boundary-concrete}
    R(x,v,v_*) :=
    (1-\alpha(x)) \frac{\delta_{v=v_*}}{(\vec{n} \cdot v_*)}
    + \alpha(x) \sqrt{2\pi} M(v)
  \end{align}
  with an \emph{accommodation coefficient}
  $\alpha : \partial \Omega \to [0,1]$. Specular boundary
  conditions correspond to $\alpha=0$, while diffusive boundary
  conditions correspond to $\alpha=1$.

  Then \eqref{eq:gen} converges exponentially if there exists a set
  $\Sigma \subset \Omega$ with $\sigma \gtrsim \indicator_{\Sigma}$
  such that a weighted Poincaré inequality with potential $\phi$ holds
  on $\Sigma$ and the set \(\Sigma\) is exposed to all initial data in
  the following sense: there exists $\chi \in L^\infty(\Omega)$ with
  $\supp \chi \subset \Sigma$ such that
  \begin{equation}\label{eq:gcc-full}
    \forall(x_0,v_0) \in \Omega \times \sV,\quad
    \int_0^T \int_{\Omega \times \sV}
    (\sfull_t\delta_{(x_0,v_0)})(x,v)\,
    \chi(x,v) \dd x \dd v \dd t \ge c.
  \end{equation}

  As an illustration, consider the spherical vessel $\Omega = B(0,1)$
  in dimension~$3$ with diffusive boundary conditions and
  $\sigma = \indicator_{\bar B(0,1/2)}$
  in~\cref{fig:spherical-vessel}. In such case, it is proved
  in~\cite{MR2765738} (with optimal polynomial rate) that the
  transport flow with diffusive boundary conditions starting from
  initial data in $L^\infty(\Omega \times \sV ; \dd x \dd \mu)$
  relaxes polynomially towards equilibrium in
  $L^2(\Omega \times \sV ; \dd x \dd \mu)$. Our method then yields
  exponential convergence in spite of the fact that the uniform
  geometric control condition would fail for the specular transport
  flow; in other words our theorem genuinely uses the dissipativity at
  the boundary.

  \begin{figure}[htb]
    \begin{center}
    \begin{tikzpicture}[scale=1.2]
      \draw[fill, color=black!10!white] (0,0) circle (2);
      \draw[very thick] (0,0) circle (2);
      \draw[green, pattern=north west lines, pattern color=green] (0,0)
      circle (0.8);
      \draw (0,0) node {$\sigma \equiv 1$};
      \draw (0,1.4) node {$\sigma \equiv 0$};
      \draw (1.5,1.5) node[anchor=south west] {$\Omega$};
      \draw[blue,very thick,->] (-60:2) -- (0:2);
      \draw[blue,very thick,dotted,->] (0:2) -- ++ (120:1);
    \end{tikzpicture}
    \hspace{3cm}
    \begin{tikzpicture}[scale=1.2]
      \draw[fill, color=black!10!white] (0,0) circle (2);
      \draw[very thick] (0,0) circle (2);
      \draw[green, pattern=north west lines, pattern color=green] (0,0)
      circle (0.8);
      \draw (0,0) node {$\sigma \equiv 1$};
      \draw (0,1.4) node {$\sigma \equiv 0$};
      \draw (1.5,1.5) node[anchor=south west] {$\Omega$};
      \draw[blue,very thick,->] (-60:2) -- (0:2);
      \draw[blue,very thick,dotted,->] (0:2) -- ++ (180:1);
      \draw[blue,very thick,dotted,->] (0:2) -- ++ (130:1);
      \draw[blue,very thick,dotted,->] (0:2) -- ++ (230:1);
    \end{tikzpicture}
  \end{center}
  \caption{Illustration of a spherical vessel domain with degenerate
    diffusion weight. On the left we have specular boundary conditions
    (no exponential convergence) and on the right we have diffusive
    boundary conditions (exponential convergence)}
  \label{fig:spherical-vessel}
\end{figure}
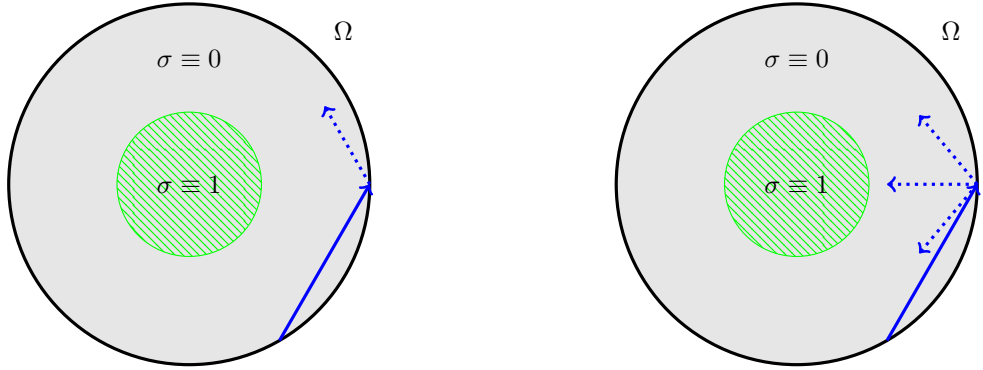
\end{example}

\begin{remark}
  Note that although we only require the Poincaré inequality on the
  potentially smaller set $\Sigma$, and not $\Omega$, the condition
  \eqref{eq:gcc-full} requires that the whole domain $\Omega$ be
  connected in finite time to $\Sigma$.
\end{remark}

\begin{example}[Harmonic potential with thermalisation only around
  zero]\label{ex:weight-gain}
  Consider the harmonic potential \(\phi(x)=x^2/2\) in one dimension
  \(\Omega = \R\) and a thermal degeneracy given by
  \(\sigma = \indicator_{|x| \le 1}\). The transport control
  condition~\eqref{eq:ex:decay-torus:gcc} then fails for $(x_0,v_0)$
  with $r_0 = \sqrt{x_0^2+v_0^2}$ large since the time the
  trajectories spend in $\{|x| \le 1\}$ is proportional to
  $r_0^{-1}$. Since the speed with which such bad trajectories
  cross the good region $\supp \sigma$ is high, one can however
  recover exponential convergence if the local coercivity estimate of
  the collision operator gains a weight $|v|$ at large velocities, see
  \cref{fig:gaining-weight}. For instance~\eqref{eq:gen} converges
  exponentially when $\cL$ is given Fokker-Planck operator over
  \(\sV = \R\).
  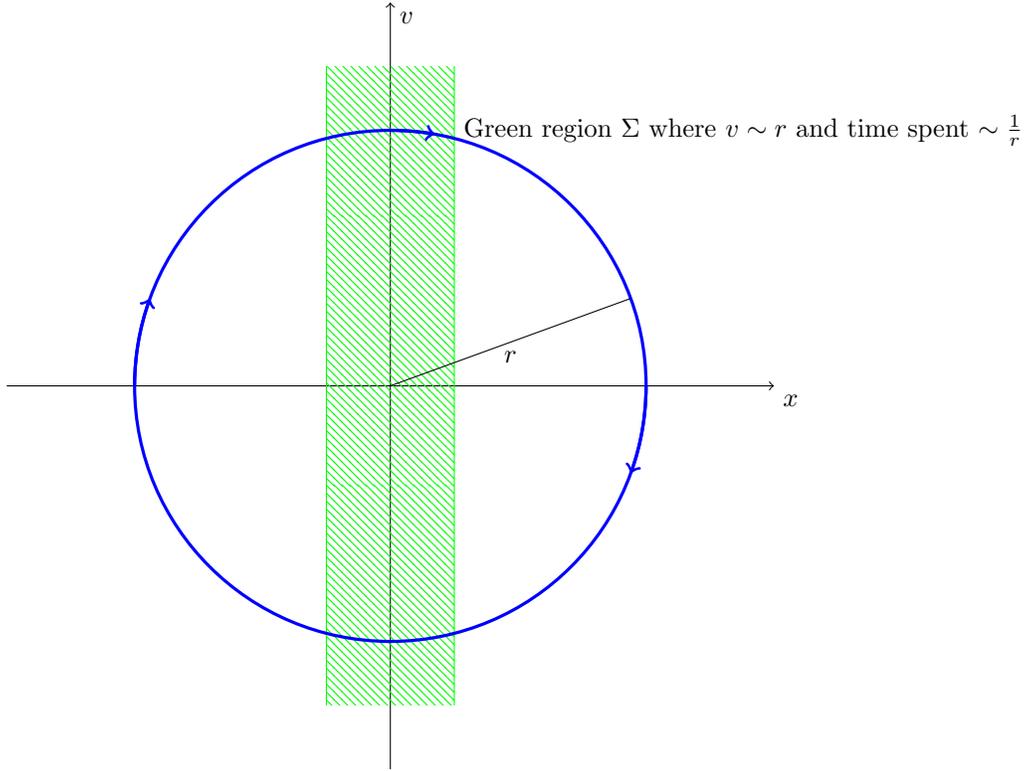
\begin{figure}
    \centering
    \begin{tikzpicture}[scale=1.7]
      \draw[->] (-3,0) -- (3,0) node[anchor=north west] {$x$};
      \draw[->] (0,-3) -- (0,3) node[anchor=north west] {$v$};
      \fill[green, pattern=north west lines, pattern color=green]
      (-0.5,-2.5) -- (0.5,-2.5) -- (0.5,2.5) -- (-0.5,2.5) -- cycle;
      \draw[green] (0.5,-2.5) -- (0.5,2.5);
      \draw[green] (-0.5,-2.5) -- (-0.5,2.5);
      \draw[very thick,blue,->] (0,0) circle (2);
      \draw[very thick,blue,->] (0:2) arc(0:-20:2);
      \draw[very thick,blue,->] (180:2) arc(-180:-200:2);
      \draw[very thick,blue,->] (-270:2) arc(-270:-280:2);
      \draw (0,0) -- node[anchor=north] {$r$} (20:2);
      \draw (0.5,2) node[anchor=west] {Green region $\Sigma$ where $v \sim r$ and time spent $\sim \frac 1 r$};
    \end{tikzpicture}
    \caption{Phase space in a harmonic potential, where the
      trajectories are circles. For $\sigma = \indicator_{|x| \le 1}$,
      the evolution decays exponentially if the collision gains the
      weight \((1+|v|)\)}
    \label{fig:gaining-weight}
  \end{figure}
\end{example}

\begin{example}[Hypoellipticity with control condition failing at a
  point]\label{ex:hypoelliptic-decay}
  In the case of the linear Boltzmann equation with deterministic
  transport flow, the geometric control condition~\eqref{eq:gcc-trans}
  is not only sufficient but \emph{necessary} to the exponential
  relaxation, see~\cite{han-kwan-leautaud-2015-geometric}: this
  follows from considering initial data whose supports concentrate on
  trajectories where the uniform geometric control
  condition~\eqref{eq:gcc-trans} fails. However, for regularising
  collision operators like the Fokker-Planck operator~\eqref{eq:FP},
  it can be relaxed to some extent, as first noticed
  in~\cite{dietert-thesis-2017}. The argument
  from~\cite{han-kwan-leautaud-2015-geometric} then fails because of
  \emph{hypoellipticity}. Let us consider the following prototypical
  example, described in \cref{fig:hypo}.

  \begin{figure}
    \centering
    \begin{tikzpicture}[scale=1.7]
      \draw[->] (-3,0) -- (3,0) node[anchor=north west] {$x$};
      \draw[->] (0,-3) -- (0,3) node[anchor=north west] {$v$};
      \draw[very thick,violet,->] (0,0) circle (1/2);
      \draw[very thick,violet,->] (0:1/2) arc(0:-20:1/2);
      \draw[very thick,violet,->] (180:1/2) arc(-180:-200:1/2);
      \draw[very thick,violet,->] (-270:1/2) arc(-270:-280:1/2);
      \draw[very thick,blue,->] (0,0) circle (1.3);
      \draw[very thick,blue,->] (0:1.3) arc(0:-20:1.3);
      \draw[very thick,blue,->] (180:1.3) arc(-180:-200:1.3);
      \draw[very thick,blue,->] (-270:1.3) arc(-270:-280:1.3);
      \draw[very thick,cyan,->] (0,0) circle (2);
      \draw[very thick,cyan,->] (0:2) arc(0:-20:2);
      \draw[very thick,cyan,->] (180:2) arc(-180:-200:2);
      \draw[very thick,cyan,->] (-270:2) arc(-270:-280:2);
      \draw[red] (0,0) -- node[anchor=north] {$r$} (20:2);
      \fill[domain=-3:3,samples=50,pattern=north west lines, pattern color=green]
      plot ({\x},{0.1*\x*\x-4})
      -- (3,-4) -- (-3,-4) -- cycle;
      \draw[green, very thick,domain=-3:3]
      plot ({\x},{0.1*\x*\x-4});
      \draw (-3,-4) -- (3,-4);
      \draw (3,-4+0.1*9) node[anchor=south] {Thermalisation
        \(\sigma\) depending on \(x\)};
      \draw (0.5,2) node[anchor=west] {Circular trajectories with $2\pi$ period};
      \draw (1.9,0.8) node[anchor=west] {Thermalisation accross a};
      \draw (2.2,0.5) node[anchor=west] {period decreases as $r^2$};
    \end{tikzpicture}
    \caption{The prototypical example of hypoellipticity overcoming
      some level of degeneracy in the control condition}
    \label{fig:hypo}
  \end{figure}
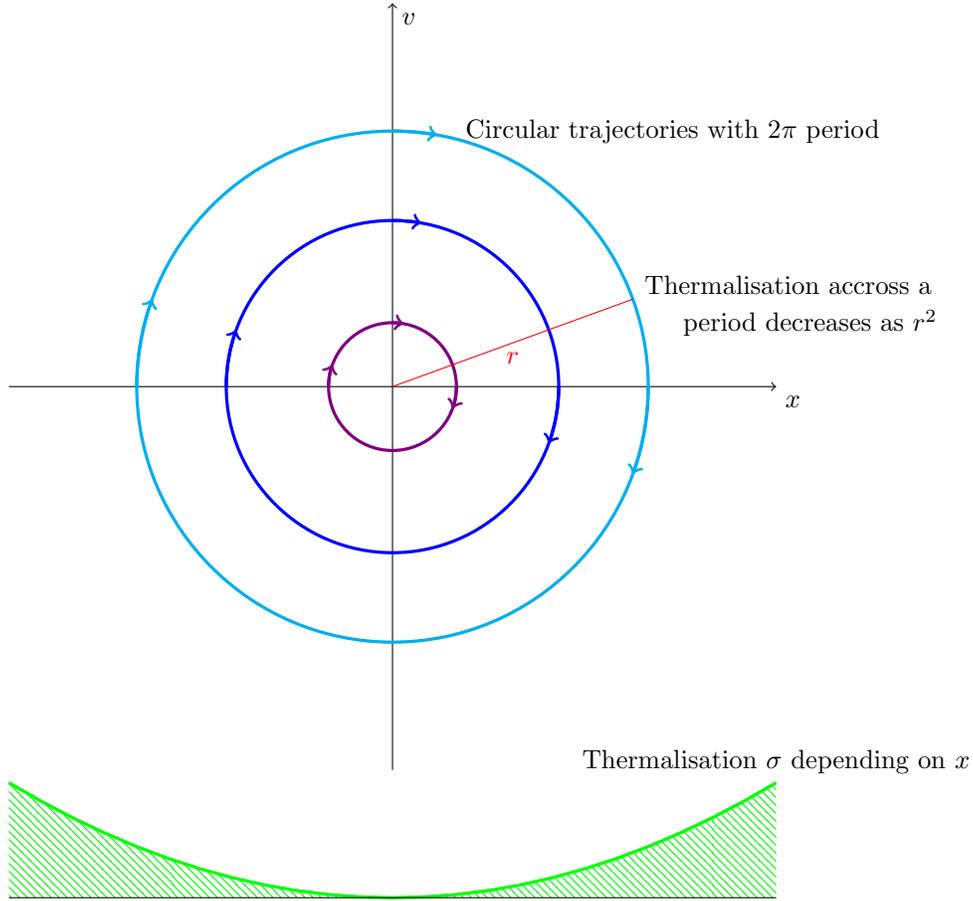

  Consider \(\Omega=\sV=\R\) in dimension $d=1$, with \(\cL\) given by
  the Fokker-Planck operator~\eqref{eq:FP}, with the harmonic
  potential \(\phi(x)=x^2/2\) and with the thermalisation degeneracy
  \(\sigma(x) = \min(x^2,1)\) around \(x=0\). Around $x=0$, the
  spatial degeneracy $\sigma(x)$ behaves as $x^2$ and the
  characteristics describe circle with period $2 \pi$ around
  $(x,v) =(0,0)$, so that the geometric control
  conditions~\eqref{eq:gcc-trans} and~\eqref{eq:gcc-full} both fail
  around $(0,0)$. However, thermalisation is effective apart from the
  latter point, and the stationary probability measure
  $\ee^{-\phi(x)} M(v)$ is still unique. We then prove the existence
  of a spectral gap thanks to \emph{corrector} $\mathfrak C$ in the
  control condition, and~\eqref{eq:gen} relaxes exponentially.
\end{example}

\subsection{Abstract assumptions}
\label{sec:abstract-assumptions}

\begin{hypothesis}[Geometric constraints]
  \label{h:geom}
  The domain $\Omega$ is either a smooth and open subset of $\R^d$, or
  $\Omega = \T^d$, and the velocity space $\sV \subset \R^d$ is even
  ($-\sV=\sV$) and spans $\R^d$. The thermalisation degeneracy
  function $\sigma$ is non-negative and bounded, and the potential
  $\phi \in C^2(\Omega)$ with $\int_\Omega \ee^{-\phi} =1$.
\end{hypothesis}

\begin{remark}
  There are no further assumptions on the velocity space $\sV$. In
  particular, a discrete velocity space $\sV$ is possible, as used in
  lattice Boltzmann methods and in simplified models. The assumption
  that $\sV$ is even is only necessary for handling boundary
  data. Otherwise, it suffices to have two bases of $\R^d$ in $\sV$
  which differ by at least one element, in order to be able to
  construct the functions $\varphi_0,\varphi_1,\dots,\varphi_d$ in
  \cref{subsec:am}. Note also that the boundedness of $\sigma$ on
  $\Omega$ can be relaxed to boundedness merely on the control set
  $\Sigma$ introduced below in \hypref{h:tcc}.
\end{remark}

\begin{hypothesis}[Equilibria and semigroup]
  \label{h:eq}
  There is a positive $M \in L^1(\sV)$ with mass $1$ so that
  $f_\infty(x,v) := \ee^{-\phi(x)} M(v)$ achieves equilibrium:
  $\cL f_\infty=\cT f_\infty=0$ and
  $\cR \gamma_+ f_\infty= \gamma_+ f_\infty$ on $\Gamma_+$.
\end{hypothesis}

There are no further assumptions on the profile $M$, in
particular non-Gaussian equilibria are possible as long as the
equilibrium condition is satisfied separately by $\cT$ and
$\cL$.

\begin{hypothesis}[Microscopic coercivity]
  \label{h:local}
  $\cL$ and $\cL^*$ are real closable operator with dense domains
  including $C_0^\infty(\sV)$ in $L^2_v(M^{-1})$, with
  $\range(\cL) \, \bot \, 1$ (preservation of mass), and so that
  $\cL + \cL^*$ has a spectral gap in the velocity space with a weight
  $w \in L^1_v(M)$ satisfying \(w \ge 1\), i.e.\ there is
  $\lambda_1 >0$ such that for any real-valued $g \in \domain(\cL)$
  \begin{align}
    \label{eq:loc-coerc}
    \int_{\sV} g(v) \cL g(v) \, \frac{\mathrm{d}v}{M(v)}\le - \lambda_1
    \int_{\sV} \left[ g(v) - \left( \int_{\sV} g(v) \dd
    v \right) M(v) \right]^2 \, \frac{w(v)\, \mathrm{d}v}{M(v)}.
  \end{align}
\end{hypothesis}

\begin{remark}
  Whether the previous spectral gap condition holds for the operator
  $\cL$ is a standard question discussed in
  \cref{ss:local-coerc-concrete}. Note that $\cL$ and $w$ could depend
  on $x$, provided conditions and estimates on them are made uniform
  in $x$, i.e.\ degeneracy is entirely captured by $\sigma$. The
  theory could also cover cases with a degenerate spectral gap, i.e.\
  \(w\) goes to zero towards infinity, which is then compensated by a
  suitable growth in \(\sigma\).
\end{remark}

\begin{hypothesis}[Boundary compatibility]
  \label{h:bdd}
  The operator $\cR$ is real and bounded in
  $L^2(\Gamma_+, \mathrm{d}\nu)$ and is contractive.  There is $C_r>0$
  such that for any $g \in L^2(\Gamma_+, \mathrm{d} \nu)$ and
  $\varphi \in L^\infty(\Gamma_+)$ it holds that
  \begin{equation}
    \label{eq:boundary-bound}
    \int_{\Gamma_+} \varphi \left[
      f_\infty \cR\left(f_\infty^{-1} g^2 \right)
       - \left( \cR g \right)^2 \right] \dd \nu \le C_r \,
    \| \varphi \|_{L^\infty(\Gamma_+)}
    \int_{\Gamma_+} \left[
      g^2 - \left( \cR g\right)^2 \right] \dd \nu
  \end{equation}
  where we denote by $\mathrm{d} S$ the surface measure of
  $\partial \Omega$ and
  ${\rm d} \nu := (\vec{n} \cdot v) f_{\infty}^{-1}(x,v) \dd S \dd v$
  on $\Gamma_+$.
\end{hypothesis}

\begin{remark}
  The condition~\eqref{eq:boundary-bound} means that the boundary
  terms coming from estimates of the squared solution with $L^\infty$
  test function can be controlled by the entropy production at the
  boundary. It is satisfied in all standard cases, as discussed in
  \cref{sec:boundary-compatibility}, and in particular \(\cR\)
  correspond to Maxwell boundary
  conditions~\eqref{eq:boundary-concrete} with an arbitrary
  accomodation coefficient \(\alpha : \partial \Omega \to
  [0,1]\). Note also that the left hand side
  of~\eqref{eq:boundary-bound} rewrites
  $\int_{\Gamma_+} [ \cR^T(\varphi) g^2 - \varphi \cR(g)^2 ] \dd \nu$
  with $\cR^T \varphi := f_\infty^{-1} \cR^*(f_\infty \varphi)$ (the
  $L^2$ adjoint), and that with such reformulation, $\cR^T$ could be
  replaced, in this assumption and the next one, by another operator
  $\cR'$ provided it satisfies, for any
  $g \in L^2(\Gamma_+, \mathrm{d} \nu)$ and
  $\varphi \in L^\infty(\Gamma_+)$,
  \begin{equation}
    \label{eq:boundary-bound'}
    \begin{dcases}
      \int_{\Gamma_+} \left[
      \cR'(\varphi)g^2
       - \varphi \left( \cR g \right)^2 \right] \dd \nu \le C_r \,
    \| \varphi \|_{L^\infty(\Gamma_+)}
    \int_{\Gamma_+} \left[
      g^2 - \left( \cR g\right)^2 \right] \dd \nu \\
      \left| \int_{\Gamma_+} \left[ \cR'(\varphi) f_\infty g - \varphi
          f_\infty \cR g \right] \dd \nu \right| \le C_r \, \| \varphi
      \|_{L^\infty(\Gamma_+)} \left(\int_{\Gamma_+} \left[ g^2 -
          \left( \cR g\right)^2 \right] \dd \nu \right)^{\frac12}.
  \end{dcases}
\end{equation}
\end{remark}

\begin{hypothesis}[Transport control condition -- abstract]
  \label{h:tcc}
  Let $\cB$ be either zero or \(\sigma \cL^T\) where
  $\sigma \cL^T \eta :=\sigma f_\infty^{-1} \cL^*(f_\infty \eta)$.
  There is a connected domain $\Sigma \subset \Omega$ so that
  $\inf_{x \in \Sigma} \sigma(x) > 0$, and there is
  $0 \le \chi \in L^\infty(\Omega \times \sV)$ with
  $\supp \chi \subset \Sigma \times \sV$ and an operator $\mathfrak C$
  in $v$ so that the solution \(\varphi\) to the evolution problem
  \begin{align}
    \label{eq:tcc-decay}
    \begin{dcases}
      \partial_t \varphi - \cT \varphi -\cB \varphi = - \mathfrak C \varphi - \chi w \varphi
      &\text{in } \Omega \times \sV, \\
      \gamma_+ \varphi = \cR^T \tilde\gamma_- \varphi
      &\text{on } \Gamma_+ \\
       \varphi_{|t=0} = \varphi_\init =1 & \text{on } \{ t =0 \}
    \end{dcases}
  \end{align}
  is bounded and converges to zero in $L^\infty$, and so that the
  operators $\cB$ and $\mathfrak C$ satisfy (for smooth $f$)
  \begin{equation}
    \label{eq:diff-bound-2}
    \begin{dcases}
      \int_{\Omega \times \sV}
      [\cB^*(f^2) - 2 f \sigma \cL f]\, (1-\varphi_t)
      \dd \mu
      \lesssim \| f \|_{L^2(\Omega\times\sV,\mathrm{d}\mu)} \,
      \left| \int_{\Omega \times \sV} \sigma f \cL f \dd
        \mu\right|^{\frac12}
      +
      \left| \int_{\Omega \times \sV} \sigma f \cL f \dd
        \mu\right|
      ,\\
      \left| \int_{\Omega \times \sV}
        [\cB^*(f f_\infty) - f_{\infty} \sigma \cL f]\, (1-\varphi_t)
        \dd \mu  \right|
      \lesssim \left| \int_{\Omega \times \sV} \sigma f \cL f \dd \mu \right|^{\frac12},
    \end{dcases}
  \end{equation}
  \begin{equation}
    \label{eq:creation-bound}
    \begin{dcases}
      \int_{\Omega \times \sV}
      \mathfrak C^*(f^2) \, \varphi_t
      \dd \mu
      \lesssim \| f \|_{L^2(\Omega\times\sV,\mathrm{d}\mu)}
      \, \left| \int_{\Omega \times \sV} \sigma f \cL f \dd \mu\right|^{\frac12}
      + \left| \int_{\Omega \times \sV} \sigma f \cL f \dd \mu\right|,\\
      \left| \int_{\Omega \times \sV}
      \mathfrak C^*(f f_\infty)\, \varphi_t \dd \mu \right|
      \lesssim \left| \int_{\Omega \times \sV} \sigma f \cL f \dd \mu\right|^{\frac12},
    \end{dcases}
  \end{equation}
  where we denote $\mathrm{d} \mu :=f_\infty(x,v)^{-1} \dd x \dd v$.
\end{hypothesis}

\begin{remark}
  In order to capture many cases, and in particular
  \cref{ex:hypoelliptic-decay} in the hypoelliptic case,
  \hypref{h:tcc} is formulated generally but we provide below easier
  conditions for the main practical cases apart from
  \cref{ex:hypoelliptic-decay}. In practice, note that the operator
  $\mathfrak C$ is only used in order to overcome degeneracy for
  hypoelliptic equations and otherwise one can take $\mathfrak
  C=0$. Regarding the operator $\cB$, one can take \(\cB=0\) if
  regularity is propagated by the transport semigroup or if $\cL$ is
  bounded. Otherwise we can drop the latter assumption by taking
  $\cB=\sigma \cL$, at the price of a $\Gamma$ condition (see
  below). Note that we could allow in \hypref{h:tcc} a more general
  \(\cB\) provided that $\cT+\cB$ combined with the boundary
  conditions associated with $\cR^T$ (or more generally $\cR'$)
  generates a contractive semigroup in $L^\infty$ admitting $1$ as
  equilibrium, and whose adjoint admits $f_\infty^2$ as equilibrium.
\end{remark}

\sethypothesistag{\ref*{h:tcc}'}
\begin{hypothesis}[Transport control condition -- practical]
  \phantomsection
  \label{h:simple-tcc}

  Assume the weight \(w=1\) in \hypref{h:local}. Then \hypref{h:tcc} is
  satisfied in the following cases:

  \begin{description}
  \item[Case 1.]  Consider the transport semigroup \(\strans_t\)
    created by $\cT$ and $\cR$ and assume that the collision operator
    \(\cL\) is bounded. Then assume a connected domain
    $\Sigma \subset \Omega$ so that
    $\inf_{x \in \Sigma} \sigma(x) > 0$ and there is a non-negative
    $\chi \in L^\infty(\Omega \times \sV)$ with
    $\supp \chi \subset \Sigma \times \sV$, and a time $T>0$ and
    constant $c>0$ such that
    \begin{equation*}
      \forall \, (x_0,v_0) \in \Omega \times \sV \qquad
      \int_0^T \int_{\Sigma \times \sV}
      (\strans_t \delta_{x_0,v_0})\, \chi(x) \dd x \dd v \dd t
      \ge c.
    \end{equation*}

  \item[Case 1'.]  Consider the transport semigroup \(\strans_t\)
    created by $\cT$ and $\cR$ and assume \(\strans_t\) propagates
    regularity in \(W^{1,\infty}\) and that the collision operator
    \(\cL\) is the Fokker-Planck operator. Then assume a connected
    domain $\Sigma \subset \Omega$ so that
    $\inf_{x \in \Sigma} \sigma(x) > 0$ and there is a non-negative
    $\chi \in W^{1,\infty}(\Omega \times \sV)$ with
    $\supp \chi \subset \Sigma \times \sV$, and a time $T>0$ and
    constant $c>0$ such that
    \begin{equation*}
      \forall \, (x_0,v_0) \in \Omega \times \sV \qquad
      \int_0^T \int_{\Sigma \times \sV}
      (\strans_t \delta_{x_0,v_0})\, \chi(x) \dd x \dd v \dd t
      \ge c.
    \end{equation*}

  \item[Case 2.] Assume \eqref{eq:gen} generates a bounded semigroup
    \(\sfull_t\) in \(L^1(\Omega\times\sV)\) and $\cL$ satisfies the
    condition
    \begin{equation}
      \label{eq:gamma-2-condition-l}
      f_\infty \cL\left(\frac{f^2}{f_\infty}\right)
      - 2 f \cL f \ge 0.
    \end{equation}
    Then assume a connected domain $\Sigma \subset \Omega$ so that
    $\inf_{x \in \Sigma} \sigma(x) > 0$, and there is a non-negative
    $\chi \in L^\infty(\Omega \times \sV)$ with
    $\supp \chi \subset \Sigma \times \sV$ and a time $T>0$ and constant
    $c>0$ such that
    \begin{equation}
      \label{eq:gcc}
      \forall \, (x_0,v_0) \in \Omega \times \sV \qquad
      \int_0^T \int_{\Sigma \times \sV}
      (\sfull_t \delta_{x_0,v_0})\, \chi(x) \dd x \dd v \dd t
      \ge c.
    \end{equation}
  \end{description}
\end{hypothesis}

The condition \eqref{eq:gamma-2-condition-l} corresponds to the
positivity of the \(\Gamma\) function in the Bakry-Émery calculus,
that follows from reversibility (see
Subsection~\ref{sec:gamma-2-verification}):
\begin{proposition}[$\Gamma$ condition]\label{thm:gamma-2-verification}
  Assume that $\cL$ is either the Fokker-Planck operator or the
  linear Boltzmann operator with a reversible kernel, then $\cL$
  satisfies \eqref{eq:gamma-2-condition-l}.
\end{proposition}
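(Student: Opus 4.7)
The plan is to treat the two cases separately by a direct pointwise algebraic computation. Since $\cL$ acts only in $v$ and $f_\infty(x,v) = \ee^{-\phi(x)} M(v)$, the factor $\ee^{-\phi(x)}$ factors out of $\cL$ and cancels on both sides, so it suffices to establish
\[
  M(v) \cL\!\left(\frac{f^2}{M}\right)(v) - 2 f(v) \cL f(v) \ge 0
\]
pointwise in $v$, treating $f = f(v)$ as a function of velocity only.

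For the Fokker-Planck case, I would exploit the divergence form
\(
\cL f = \nabla_v \cdot \bigl( M \nabla_v (f/M) \bigr),
\)
valid both on $\R^d$ (with Gaussian $M$) and on $\S^{d-1}$ (with uniform $M$ and $\nabla_v$ the spherical gradient, where the identity degenerates to $\cL f = \Delta_{\mathrm{LB}} f$). Writing $h := f/M$ and applying the product rule to $\nabla_v(h^2) = 2 h \nabla_v h$, I get
\[
M \cL\!\left(\tfrac{f^2}{M}\right) = M \nabla_v \cdot \bigl( 2 h M \nabla_v h \bigr) = 2 M^2 |\nabla_v h|^2 + 2 M h \, \nabla_v \cdot (M \nabla_v h) = 2 M^2 |\nabla_v h|^2 + 2 f \cL f,
\]
so that the desired difference equals $2 M^2 |\nabla_v(f/M)|^2 \ge 0$.

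For the linear Boltzmann case, reversibility (detailed balance) gives $k(v,v_*) M(v_*) = k(v_*,v) M(v)$, so the kernel $K(v,v_*) := k(v,v_*) M(v_*) = M(v) k(v_*,v)$ is symmetric in $(v,v_*)$. Factoring $M$ and again setting $h := f/M$, the operator rewrites
\[
\cL f(v) = \int_{\sV} K(v,v_*) \bigl[ h(v_*) - h(v) \bigr] \dd v_*.
\]
Substituting $f \rightsquigarrow f^2/M$ amounts to replacing $h$ by $h^2$ in this formula, so
\[
M(v)\cL\!\left(\tfrac{f^2}{M}\right)(v) - 2 f(v) \cL f(v) = M(v) \int_{\sV} K(v,v_*) \bigl[ h(v_*)^2 - h(v)^2 - 2 h(v) (h(v_*) - h(v)) \bigr] \dd v_*,
\]
and expanding the bracket gives $(h(v_*) - h(v))^2 \ge 0$, yielding the claim.

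There is no serious obstacle: the identity is purely algebraic in each case, reflecting the fact that the quantity on the left is (up to a positive factor) the local carré-du-champ of $\cL$ with respect to the reversible measure $M \dd v$. The only minor care needed is to check the divergence form of Fokker-Planck on the sphere, which follows from $M$ being constant there.
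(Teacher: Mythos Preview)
Your proof is correct. For the Fokker--Planck case the paper simply calls the inequality ``classical'' and gives no details, so your explicit carr\'e-du-champ computation is a welcome addition. For the linear Boltzmann case your argument is in the same spirit as the paper's (a pointwise algebraic identity exploiting reversibility) but is cleaner: by passing to $h=f/M$ and the symmetric kernel $K(v,v_*)=k(v,v_*)M(v_*)$ at the outset, you obtain the difference directly as the perfect square $M(v)\int K(v,v_*)\,(h(v_*)-h(v))^2\dd v_*$. The paper instead keeps the asymmetric form, completes the square in $f(v)$, and is left with a cross term $-\bigl[\int k f_*\bigr]^2/\int k(v_*,v)\,\dd v_* + M\int k\, f_*^2/M_*$ which it bounds via Cauchy--Schwarz together with reversibility. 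Your route makes the carr\'e-du-champ interpretation transparent and avoids the auxiliary inequality.
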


\begin{remark}
  The assumption that $\Sigma$ is connected could be relaxed: our
  method works provided the different connected components of $\Sigma$
  are connected through the transport semigroup. For a simple setting,
  this is discussed in
  \cite{dietert-herau-hutridurga-mouhot-2022-trajectorial}.
\end{remark}

For the functional inequalities capturing the macroscopic coercivity
on \(\Sigma\), we introduce a notion of regularity for
domain-potential pairs, represented in \cref{fig:reg-dom}:
\begin{figure}
  \centering
  \begin{tikzpicture}[xscale=1.4]
    \draw[domain=0:7,samples=150,fill=black!10!white,very thick]
    plot ({7-\x},{2-0.3*sin(50*(7-\x)*(7-\x))})
    -- (0,1)
    -- (0,0)
    -- plot (\x,{0.3*sin(50*\x*\x)});
    \draw (1.2,1) node {Domain \(\sU\)};
    \draw[->] (3,1) -- (2.8,1);
    \draw (3,1) node[anchor=west]{Confining potential \(\Phi\)};
    \draw[->] (1,-1) -- (6,-1);
    \draw (3.5,-1) node[anchor=north]{Curvature does not grow
      faster than \(|\nabla \Phi|\) as \(|x| \to \infty\)};
  \end{tikzpicture}
  \caption{Representation of a regular domain-potential pair
    $(\sU,\Phi)$}
  \label{fig:reg-dom}
\end{figure}
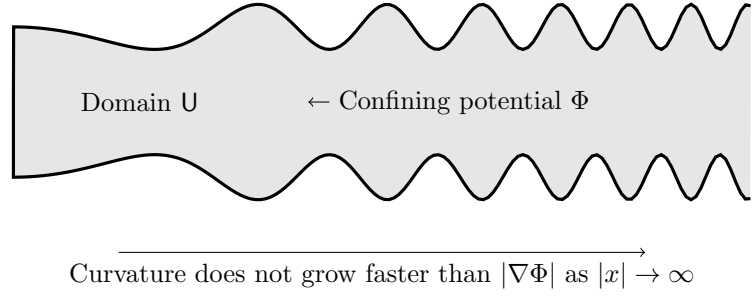
\begin{definition}
  \label{def:weighted-reg}
  Given $\epsilon >0$, a domain $\sU \subset \R^n$, $n \ge 1$, and a
  potential $\Phi \in C^2(\sU)$, we say that this domain-potential
  pair $(\sU,\Phi)$ satisfies the \emph{$\epsilon$-regularity
    condition} if $|\nabla^2 \Phi| \lesssim 1 + |\nabla \Phi|$ and for
  every boundary point $x \in \partial \sU$ there exists an isometry
  $T:\R^n \to \R^n$ with $T(x) = 0$ and $f \in C^1(\R^{n-1})$ with
  $\| \nabla f \|_\infty \le 1/8$ such that, denoting $B(x,r)$ the
  ball of radius $r$ around $x$ and
  $\lfloor \nabla \Phi \rceil := (1+|\nabla \Phi|^2)^{1/2}$,
  \begin{equation*}
    T\left(B(x, \epsilon \lfloor \nabla \Phi \rceil^{-1}) \cap U\right)
    = T\left(B(x, \epsilon \lfloor \nabla \Phi \rceil^{-1})\right)
    \cap
    \left\{ (\xi',\xi_n) \in \R^{n-1} \times \R : \xi_n > f(\xi') \right\}.
  \end{equation*}
\end{definition}

\begin{remark}
  When the boundary $\partial \sU$ is bounded and at least $C^1$ this
  is automatically satisfied.
\end{remark}

\begin{hypothesis}[Macroscopic coercivity]
  \label{h:coe}
  Given the set $\Sigma \subset \Omega$ from \hypref{h:tcc}, the pair
  $(\Sigma,\phi)$ is $\epsilon$-regular for some $\epsilon >0$, and
  the potential $\phi$ satisfies a Poincaré-Wirtinger inequality on
  $\Sigma$, i.e.\ there is $\lambda_2 > 0$ so that
  \begin{align}
    \label{eq:x-coerc}
    \forall \, \rho \in H^1(\Sigma;\ee^{\phi}), \quad
    \int_{\Sigma} \left| \nabla_x \rho + \rho \nabla_x \phi  \right|^2
    \ee^{\phi} \dd x \ge \lambda_2
    \int_{\Sigma} \left| \rho - \left( \int_\Sigma \rho \right)
    \ee^{-\phi} \right|^2\, \lfloor \nabla \phi \rceil^2\,
   \ee^{\phi} \dd x.
  \end{align}
\end{hypothesis}

\begin{remark}
  Such inequality, with or without the additional weight
  $\lfloor \nabla \phi \rceil^2$ on the right hand side
  of~\eqref{eq:x-coerc}, is a classical concentration estimate. We
  discuss sufficient conditions in \cref{sec:poincare}.
\end{remark}

\subsection{Main results}

For considering the evolution problem~\eqref{eq:gen} we need a notion
of solution that admits suitable traces. This is provided in
\cref{sec:traces}: given initial data
$f_{\init} \in L^2(\Omega \times \sV, \dd \mu)$, there is a unique
solution
$f=f(t,x,v) \in C^0([0,+\infty);L^2(\Omega \times \sV, \dd \mu))$
to~\eqref{eq:gen} that admits traces
$\gamma f \in L^2([0,+\infty) \times \partial \Omega; \mathcal H(\sV,
(\vec{n} \cdot v)^2 \dd \mu))$, where $\mathcal H$ is $L^2$ for the
bounded linear Boltzmann and $H^{-1}$ for the Fokker-Planck
operator. Under the abstract condition of the previous subsection, we
establish a spectral gap in the following theorem; however note that
when $\cL$ is given by the Fokker-Planck operator (and is unbounded)
and one uses the full semigroup $\sfull_t$ for the control condition,
then to fully justify the a priori estimates of the proof, it would be
necessary to approximate the operator by bounded operators, so that
the trilinear boundary terms are all well-defined (note that integrals
involved are however perfectly controlled from above by the a priori
estimates).

\begin{theorem}[Quantitative relaxation to equilibrium]
  \label{theo:main}
  Assume
  \hypref{h:geom}-\hypref{h:eq}-\hypref{h:local}-\hypref{h:bdd}-\hypref{h:tcc}-\hypref{h:coe}. There
  are $C > 1$, $\Lambda >0$ such that given any
  $f_{\init} \in L^2(\Omega \times \sV, \dd \mu)$, any
  $f=f(t,x,v) \in C^0([0,+\infty);L^2(\Omega \times \sV, \dd
  \mu))$ admitting traces
  $\gamma f \in L^2([0,+\infty) \times \partial \Omega;
  H^{-1}(\sV, (\vec{n} \cdot v)^2 \dd \mu))$ and solution
  to~\eqref{eq:gen} satisfies
  \begin{align}
    \label{eq:relax}
    \left\| f_t - \left( \int_{\Omega \times \sV}
    f_{\init}\right) f_\infty  \right\|_{L^2(\Omega \times \sV,
    \dd \mu)}
    \le C \ee^{-\Lambda t}
    \left\| f_{\init} - \left( \int_{\Omega
    \times \sV} f_{\init} \right) f_\infty \right\|_{L^2(\Omega
    \times \sV, \dd \mu)}.
  \end{align}
  The constants $C$ and $\Lambda$ can be computed from the
  constants in the assumptions.
\end{theorem}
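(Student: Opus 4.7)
The plan follows the hypocoercivity philosophy, but replaces the usual algebraic corrector of Dolbeault--Mouhot--Schmeiser with the \emph{trajectory-based multiplier} $\varphi_t$ supplied by \hypref{h:tcc}.

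\emph{Step 1 (Reduction and modified energy).} Mass conservation --- obtained by combining the mass-preservation of $\cL$ with the boundary normalisation \eqref{eq:mass-conservation-bdd} --- keeps $\int f_t \dd x \dd v$ constant. Subtracting $(\int f_{\init}) f_\infty$ reduces the theorem to proving exponential decay of $\|f_t\|_{L^2(\dd\mu)}$ for zero-mass data. I build the trajectory-weighted Lyapunov functional
\[
  \mathcal E(t) := \int_{\Omega\times\sV} f_t^2\,\bigl(A - \varphi_t\bigr)\dd\mu, \qquad A > 1,
\]
with $\varphi_t$ from \eqref{eq:tcc-decay}. A maximum-principle argument based on the damping $-\chi w\varphi$ and the contractivity of $\cR^T$ yields $0 \le \varphi_t \le 1$, so $\mathcal E(t)$ is comparable to $\|f_t\|_{L^2(\dd\mu)}^2$ with ratio between $A-1$ and $A$.

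\emph{Step 2 (Differential inequality).} I differentiate $\mathcal E$ and use both evolution equations. The $\cT$-contributions cancel under integration by parts up to boundary fluxes, which are exactly what \hypref{h:bdd} is designed for: the compatibility inequality \eqref{eq:boundary-bound}, tested against $\varphi_t \in L^\infty$, dominates the trilinear boundary term by the entropy production at $\partial \Omega$, itself absorbed in the $L^2$-balance of $f$ via the contractivity of $\cR$. The $\cB$- and $\mathfrak C$-contributions are precisely what the two pairs \eqref{eq:diff-bound-2}--\eqref{eq:creation-bound} are tailored to absorb: they say that the deviations of $\cB^*(f^2)$ from $2f\sigma\cL f$ and of $\mathfrak C^*(f^2)$ from $0$, tested against the respective weights $1-\varphi_t$ and $\varphi_t$, are dominated by $\|f\|_{L^2(\dd\mu)}\cdot |\!\int \sigma f\cL f \,\dd\mu|^{1/2}$, hence absorbable by Young's inequality. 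What remains is the clean bound
\[
  \mathcal E'(t) \le -c_1 \int \sigma w\,(f - \rho M)^2 \dd\mu - c_2 \int f^2\,\chi w\,\varphi_t \dd\mu,
\]
where $\rho := \int_\sV f\,\dd v$; the first dissipation term comes from \hypref{h:local}, the second is the trajectory observation produced by the $-\chi w\varphi$ term of \eqref{eq:tcc-decay} acting on the weight.

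\emph{Step 3 (Closing the estimate via macroscopic coercivity).} I now have to dominate the right-hand side above by $-\Lambda \mathcal E(t)$. Decompose orthogonally $f = (f - \rho M) + \rho M$ in $L^2(\dd\mu)$, so that $\|f\|^2_{L^2(\dd\mu)} = \|f-\rho M\|^2_{L^2(\dd\mu)} + \int_\Omega \rho^2 \ee^\phi\,\dd x$. The microscopic piece is controlled on $\Sigma$ by the first dissipation term, and propagation from $\Sigma$ to the rest of $\Omega$ is provided by the second: \eqref{eq:gcc-full} forces $\varphi_t$ to drop strictly below $1$ along every configuration within a finite time, so a time-integrated comparison transfers $L^2$-mass from $\Omega\setminus\Sigma$ onto $\Sigma\times\sV$. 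For the macroscopic piece I invoke the weighted Poincar\'e--Wirtinger inequality \hypref{h:coe} on $\Sigma$, whose right-hand side controls $\rho$ in terms of $\int_\Sigma|\nabla_x\rho + \rho\nabla_x\phi|^2 \ee^\phi\dd x$; this gradient is reconstructed from the current $j := \int_\sV v f\,\dd v = \int_\sV v(f-\rho M)\,\dd v$ via the continuity equation $\partial_t \rho + \diver_x j = 0$ together with a weighted Bogovskii-type divergence-inversion inequality --- the very class of functional inequalities that the paper announces and develops. Combining yields $\mathcal E'(t) \le -\Lambda\mathcal E(t)$ for an effectively computable $\Lambda > 0$, and Gr\"onwall delivers $\mathcal E(t) \le \ee^{-\Lambda t} \mathcal E(0)$, hence \eqref{eq:relax} with $C = \sqrt{A/(A-1)}$.

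The main obstacle is Step 3: converting the local-in-$x$ trajectory observation supported on $\Sigma$ into a global coercivity estimate on the whole of $\Omega$, which is precisely where the weighted divergence/Bogovskii/Stokes inequalities highlighted in the abstract enter. A secondary technical issue, in the Fokker--Planck case with $\cB = \sigma\cL^T$ and unbounded $\cL$, is the need to justify the trilinear boundary terms by approximating $\cL$ by bounded operators, as already flagged in the paragraph preceding the statement.
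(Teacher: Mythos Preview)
Your proposal has a real gap in Step~3, and a sign error in Step~2 that is symptomatic of it. With $\mathcal E(t)=\int f_t^2(A-\varphi_t)\dd\mu$, the weight $A-\varphi_t$ is \emph{increasing} (since $\varphi_t$ decays from $1$ to $0$), so the term coming from $-\chi w\varphi$ in \eqref{eq:tcc-decay} enters $\mathcal E'(t)$ with a \emph{positive} sign: you get $+\int f_t^2\chi w\varphi_t\dd\mu$, not the dissipation $-c_2\int f_t^2\chi w\varphi_t\dd\mu$ you claim. Swapping to $A+\varphi_t$ fixes the sign but then the observation term, supported on $\Sigma$ and multiplied by the vanishing $\varphi_t$, cannot dominate $\|f_t\|_{L^2(\dd\mu)}^2$ for all $t$, so no inequality $\mathcal E'(t)\le-\Lambda\mathcal E(t)$ can hold. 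More fundamentally, your reconstruction of $\nabla_x\rho+\rho\nabla_x\phi$ from the current via the continuity equation does not work at a fixed time: $\partial_t\rho+\diver_x j=0$ relates the \emph{time derivative} of $\rho$ to $\diver_x j$, not the spatial gradient of $\rho$ to anything controlled by dissipation.

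The paper does not attempt a pointwise differential inequality. It proves instead the time-integrated estimate $\int_0^T\cD(f_t)\dd t\ge\eta\|f_{\init}\|^2_{L^2(\dd\mu)}$ for some $\eta\in(0,1)$ and fixed $T$, and deduces exponential decay by iterating on blocks of length $T$. The trajectory step (Section~3.3) uses the Duhamel-type identity for $\int f_s^2\,\cG_{t-s}(\psi_t+\tilde\psi_t)\dd\mu$ to produce the observation bound $\tfrac34\|f_{\init}\|^2-\int_0^T\!\!\int f_t^2\psi_t\dd\mu\dd t\lesssim\int_0^T\cD(f_t)\dd t$; then the macroscopic piece $\int_0^T\!\!\int_\Sigma\langle f_t\rangle^2\langle\psi_t\rangle_M\ee^\phi\dd x\dd t$ is handled by applying the divergence inequality of \cref{theo:ineq} not on $\Sigma$ but on the \emph{space-time} cylinder $\Sigma_T=(0,T)\times\Sigma$, writing $\partial_i(\langle f\rangle\ee^\phi)=K_i+\sum_{j=0}^d\partial_j J_{ij}$ (with $\partial_0=\partial_t$) where all $K_i,J_{ij}$ are controlled by the \emph{time-integrated} dissipation. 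This use of time as an extra coordinate is exactly what absorbs the $\partial_t$-terms you cannot handle pointwise, and it is the step you are missing.
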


This covers the following common cases.
\begin{corollary}[Application to concrete equations]
  \label{cor:LB}
  We list the possible settings:

  Regarding the \textbf{geometric constraints}:
  \begin{itemize}
  \item The spatial domain is either $\Omega = \T^d$ or a smooth
    open subset $\Omega \subset \R^n$.\
  \item The velocity space $\sV$ and potential $\phi$ are either
    $(\S^{d-1},0)$ or $(\R^d,\phi)$ with $\phi \in C^2(\Omega)$.
  \item There is a connected open subset $\Sigma \subset \Omega$ so
    that $(\Sigma,\phi)$ is $\epsilon$-regular for some $\epsilon>0$,
    and $\sigma \gtrsim 1$ and
    $\frac{|\nabla \phi|^2}{2} - \Delta \phi \xrightarrow[]{|x| \to
      \infty} \infty$ on $\Sigma$ and
    $\vec{n} \cdot \nabla \phi \ge 0$ on $\partial \Sigma$.
  \end{itemize}

  Regarding the \textbf{collision process}:
  \begin{itemize}
  \item The operator $\cL$ is the linear Boltzmann
    operator~\eqref{eq:LB} or the Fokker-Planck
    operator~\eqref{eq:FP}.
  \item There is unique equilibrium measure $M=M(v)$ for $\cL$.
  \item The operator $\cL$ is symmetric non-negative and satisfies a
    weighted spectral gap inequality~\eqref{eq:loc-coerc} with weight
    $1 \le w \in L^1(M)$.
  \end{itemize}

  Regarding the \textbf{transport control conditions}:
  \begin{itemize}
  \item When the transport flow is \textbf{deterministic}, e.g. when
    there is no boundaries or for specular or bounce-back boundary
    conditions in~\eqref{eq:boundary}, there are characteristics
    $(X_t(x,v),V_t(x,v))$ of $\strans_t$. Assume that there is
    $\chi \in C^\infty_c(\Sigma)$ and $T>0$ such that
    \begin{align}
      \label{eq:ugcc-deterministic}
      \forall \, (x,v) \in \Omega \times \sV, \quad \int_0^T
      \chi\left(X_{t}(x,v)\right) w(V_{t}(x,v)) \dd  t \ge 1.
    \end{align}
    Additionally, assume that \(\cL\) is bounded or that \(\cL\) is
    the Fokker-Planck operator, \(\strans_t\) propagates
    \(W^{1,\infty}\) regularity and the weight \(w\) is such that
    \(|\nabla w| \lesssim w\) and
    \begin{equation}
      \label{eq:ugcc-deterministic-upper}
      \int_0^T \indicator_{\supp \chi}\left(X_{t}(x,v)\right)
      w(V_{t}(x,v)) \dd t \lesssim 1.
    \end{equation}
  \item Assume general Maxwell conditions~\eqref{eq:boundary-concrete}
    with $\alpha : \partial\Omega \to [0,1]$. Assume that \(\cL\) is a
    scattering operator satisfying detailed-balance (including
    Fokker-Planck). Then assume that there is
    $\chi \in C^\infty_c(\Sigma)$, \(c>0\) and $T>0$ such that
    \begin{equation}
      \label{eq:decay-bd-hyp}
      \forall \, (x_0,v_0) \in \Omega \times \sV \qquad
      \int_0^T \int_{\Sigma \times \sV}
      (\sfull_t \delta_{x_0,v_0})\, \chi(x) \dd x \dd v \dd t
      \ge c.
    \end{equation}
  \end{itemize}

  Then \cref{theo:main} applies and implies exponential
  relaxation to equilibrium for solutions in $L^2_{x,v}(\mu)$
  with quantitative estimate on the rate.
\end{corollary}

\begin{remark}
  The condition \eqref{eq:ugcc-deterministic} for a deterministic flow
  covers \cref{ex:weight-gain} with the weight
  \(w(v) = \sqrt{1+|v|^2}\). For this application, we only use the
  necessary gain from the weight so that we also have the upper bound
  \eqref{eq:ugcc-deterministic-upper}.
\end{remark}

The proof of the abstract \cref{theo:main} is given in
\cref{sec:abstract} but makes use of quantitative \emph{divergence
  inequalities} proved in \cref{sec:divergence}. The proof of
\cref{cor:LB} is given in \cref{sec:concrete}.

The trajectorial approach can also be used to show convergence without
a rate for a non-uniform control condition, which is discussed in
\cref{sec:non-unif}.

\subsection{Geometric control conditions and hypoellipticity}
\label{sec:intro-hypo}

We now come back to \cref{ex:hypoelliptic-decay} in which we relax the
control condition when the collision operator \(\cL\) is regularising,
i.e.\ \eqref{eq:gen} is hypoelliptic. To achieve such a relaxation, we
use the additional \emph{corrector} operator \(\mathfrak C\) in
\hypref{h:tcc}.  In the case of the Fokker-Planck operator with the
Hörmander-type structure $\sigma \cL = - \cA^*\cA$, a natural choice
is $\mathfrak C = f_\infty^{-1} \cA^*(f_\infty a \cdot)$ for a
well-chosen bounded function $a \in L^\infty(\Omega\times \sV)$ that
satisfies \eqref{eq:creation-bound} and ensures the decay of the
solution to~\eqref{eq:tcc-decay}.  In the simple but illuminating
\cref{ex:hypoelliptic-decay}, we deduce the existence of a spectral
gap with quantitative estimates, in spite of the fact that the
transport control condition is \emph{not} satisfied (and neither are
more standard uniform geometric control conditions); moreover the
corresponding stochastic process suggests that the quadratic
degeneracy of $\sigma$ is critical for the existence of a spectral
gap. Note that we also give an alternative proof using the commutator
method developed in~\cite{villani-2009-hypocoercivity}, see
\cref{sec:counter}.
\begin{theorem}
  \label{thm:w-example}
  Consider $\Omega=\sV=\R$, the potential $\phi(x) = x^2/2$, a
  thermalisation degeneracy function $\sigma \in L^\infty$ that
  satisfies $\sigma(x) \gtrsim \min(1,x^2)$, and the Fokker-Planck
  collision operator~\eqref{eq:FP}. Then the semigroup is relaxing
  exponentially to equilibrium.
\end{theorem}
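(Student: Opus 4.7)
The plan is to apply the abstract \cref{theo:main} with $\cB = \sigma\cL$ and a carefully chosen hypoelliptic corrector $\mathfrak C$ in \hypref{h:tcc}; the simpler \hypref{h:simple-tcc} does not apply, because on trajectories of small radius in phase space the degeneracy $\sigma(x) \sim x^2$ balances exactly the period-$2\pi$ return time, so no uniform lower bound \eqref{eq:gcc} holds. The hypoellipticity encoded in the bracket $[\partial_v, \cT] = \partial_x$ must be exploited via $\mathfrak C$ to compensate for this failure.

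The routine hypotheses are straightforward: \hypref{h:geom}, \hypref{h:eq} and \hypref{h:bdd} hold with $f_\infty(x,v) = (2\pi)^{-1} \ee^{-(x^2+v^2)/2}$ and no boundary; \hypref{h:local} reduces to the classical Gaussian Poincaré inequality in $L^2(M^{-1}\dd v)$ with weight $w \equiv 1$ and gap $\lambda_1 = 1$; and \cref{thm:gamma-2-verification} provides \eqref{eq:gamma-2-condition-l}, making $\cB = \sigma\cL$ admissible. For \hypref{h:coe}, choose the control set $\Sigma := \{x > 1/2\}$, so that $\inf_\Sigma \sigma > 0$ by hypothesis and the weighted Poincaré-Wirtinger inequality \eqref{eq:x-coerc} with weight $1+x^2$ is the classical one-dimensional Gaussian estimate on a half-line (\cref{sec:poincare}); the pair $(\Sigma,\phi)$ is $\epsilon$-regular for some $\epsilon>0$.

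For \hypref{h:tcc} itself, factor $\sigma\cL = -\cA^*\cA$ in $L^2(\mu)$ with $\cA f := \sqrt{\sigma(x)}\,\sqrt{f_\infty}\,\partial_v(f/f_\infty)$, so that $-\ip{f}{\sigma\cL f}_\mu = \|\cA f\|_{L^2}^2$. Introduce
\begin{equation*}
  \mathfrak C\varphi := f_\infty^{-1}\cA^*\bigl(f_\infty\, a(x,v)\,\varphi\bigr),\qquad
  a(x,v) := \theta(x^2+v^2)\,v,
\end{equation*}
for a smooth $\theta$ equal to $1$ on $[0,1]$ and vanishing outside $[0,4]$, and choose $\chi \in C^\infty_c(\R^2)$ supported in $\{x \ge 1\}$ and equal to $1$ on $\{2\le x\le 3,\ |v|\le 2\}$. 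The bounds \eqref{eq:diff-bound-2} on $\cB = \sigma\cL$ follow directly from $|\ip{f}{\sigma\cL f}_\mu|=\|\cA f\|_{L^2}^2$ together with $|1-\varphi_t|\le 1$, while \eqref{eq:creation-bound} reduces, after integration by parts in $v$, to Cauchy-Schwarz estimates of the form
\begin{equation*}
  \left| \int f\, (\partial_v a)\, \sqrt{\sigma}\,\partial_v (f/f_\infty)\, f_\infty \dd x \dd v \right|
  \lesssim \|f\|_{L^2(\mu)}\|\cA f\|_{L^2},
\end{equation*}
since $\partial_v a$ is bounded with compact support on which $\sqrt\sigma$ is bounded.

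The main obstacle is to show $\|\varphi(t,\cdot,\cdot)\|_{L^\infty} \to 0$ for the dual problem \eqref{eq:tcc-decay} started from $\varphi_\init \equiv 1$. On circles of radius $r\ge 2$ the sink $-\chi\,\varphi$ acts during a positive fraction of each period $2\pi$ and produces uniform exponential decay by an averaging argument along the flow of $\cT+\cB$. For circles of radius $r\le 1$ the support of $\chi$ is never visited and the required decay must come from $\mathfrak C$ alone: unpacking its definition, $\mathfrak C$ contributes a drift proportional to $\sigma(x)\,v\,\partial_v$ plus a bounded zeroth-order term on the support of $\theta$, and the commutator $[\sigma\, v\,\partial_v,\cT]$ produces an $\partial_x$-type term which, combined with the confinement $\phi=x^2/2$, supplies the missing dissipation along small circles at a rate proportional to $r^2$ per period. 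The quadratic degeneracy $\sigma(x) \sim x^2$ is precisely what allows this balance to close over the period $2\pi$, yielding a uniform contraction per period and hence exponential $L^\infty$ decay. Alternatively, one may bypass the abstract construction via the commutator hypocoercivity method of Villani (see \cref{sec:counter}): with $g := f/f_\infty$, the modified energy $\|g\|^2_{L^2(f_\infty)}+\epsilon\|\partial_v g\|^2_{L^2(f_\infty)}+2\epsilon^{3/2}\ip{\partial_v g}{\partial_x g}_{L^2(f_\infty)}+\epsilon^2\|\partial_x g\|^2_{L^2(f_\infty)}$ has dissipation bounded below (using $[\partial_v,\cT]=\partial_x$ and $\sigma \gtrsim x^2$) by a full Gaussian-weighted Poincaré form, giving directly the exponential decay with quantitative rate.
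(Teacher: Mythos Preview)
Your proposal has a genuine gap in the construction of the corrector $\mathfrak C$. With your choice $a(x,v)=\theta(x^2+v^2)\,v$ and your $\cA$ carrying the factor $\sqrt{\sigma}$, the operator $\mathfrak C$ acquires a prefactor $\sqrt{\sigma}\sim |x|$ near the origin, so on a circle of radius $r$ the zeroth-order source in \eqref{eq:tcc-decay} is $O(r)$ (and you in fact write ``at a rate proportional to $r^2$ per period''). Since the harmonic-oscillator period is the constant $2\pi$, this means the contraction per period vanishes as $r\to 0$ and $\|\varphi_t\|_{L^\infty}$ does \emph{not} decay uniformly. The commutator heuristic you invoke (``$[\sigma v\partial_v,\cT]$ produces a $\partial_x$-type term'') is the wrong tool here: equation \eqref{eq:tcc-decay} is a scalar transport/absorption equation for $\varphi$, and its $L^\infty$ decay is governed by the sign and size of the zeroth-order coefficient along characteristics, not by bracket algebra on a Sobolev functional. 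The paper instead takes $\cB=0$ (not $\sigma\cL$) and the very specific clip $a=-f_\infty^{-1}\gamma(x)\gamma(v)\,x\max(-1,\min(v/x,1))$, whose $v$-derivative is essentially $\indicator_{|v|<|x|}$, producing a zeroth-order absorption of order one, \emph{independent of $r$}, on a positive fraction of each small circle; the resulting drift is $O(\epsilon)$ and only perturbs the circular trajectories. That is the idea you are missing.

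Your alternative via the one-commutator Villani functional also does not close. With dissipation $\sigma\|\partial_v g\|^2_{L^2(f_\infty)}$ (the $\sigma$ is not removable), differentiating $\|\partial_v g\|^2$ and the cross term $\langle\partial_v g,\partial_x g\rangle$ produces error terms of size $\|\partial_v g\|^2$ \emph{without} the weight $\sigma$, which cannot be absorbed when $\sigma(x)\sim x^2$ vanishes. This is precisely why the paper's commutator proof in \cref{sec:counter} requires \emph{three} iterated commutators $\cC_1,\cC_2,\cC_3$ built from $\cA=\kappa\partial_v$ (with $\kappa\sim x$), together with a weighted Poincar\'e inequality of the form $\|h\|^2\lesssim\|(x^2+v^2)^{1/2}\nabla h\|^2$; the single bracket $[\partial_v,\cT]=\partial_x$ is not enough to recover coercivity at $x=0$.
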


\subsection{Functional inequalities}
\label{sec:intro-fct-in}

In the good set \(\Sigma\) of \hypref{h:tcc}, we perform a standard
micro-macro decomposition and note that the dissipation gives some
control on the macroscopic density. This was already implicitly
understood in \cite{hoermander-1967-hypoelliptic} and more recently
formulated in the kinetic setting in
\cite{albritton-armstrong-mourrat-novack-2019-variational-fokker-planck}. This
approach has also been used to show hypocoercivity in much more
restricted cases
\cite{cao-lu-wang-2019-l-langevin,brigati-2021-time-fokker-planck}.

A key ingredient is the use of a suitable divergence inequality (also
known as Bogovskiǐ operator to invert the divergence), that we extend
to cases combining weight and boundaries. This is of independent
interest, and it is proved in \cref{sec:divergence}.
\begin{theorem}
  [Divergence, Poincaré-Lions, Korn and Stokes inequalities]
  \label{theo:ineq}
  Consider $n \ge 1$ and $\sU \subset \R^n$ open and consider a
  $C^2$ potential $\Phi : \sU \to \R$ so that $(\sU,\Phi)$ is
  $\epsilon$-regular for some $\epsilon>0$, and
  $\ee^{-\Phi(z)} \dd z$ satisfies the Poincaré-Wirtinger
  inequality on $\sU$:
  \begin{equation*}
    \forall \, \rho \in H^1(\sU ; \ee^\Phi) \text{ with }
    \int_\sU \rho =0, \quad \int_{\sU} \left| \nabla \rho + \rho
      \nabla \Phi  \right|^2 \ee^{\Phi} \dd z \gtrsim
    \int_{\sU} \rho^2\,
    \lfloor \nabla \Phi \rceil^2\,
    \ee^{\Phi} \dd z.
  \end{equation*}
  Then the following inequalities hold with quantitative
  estimates:
  \begin{enumerate}
  \item {\bf Divergence inequality.}  There is $C_\sD > 0$ and a
    linear map $\sD$ mapping any $g \in L^2(\sU;\ee^\Phi)$ with
    $\int_{\sU} g =0$ to a $\vec{F} : \sU \to \R^n$ in
    $H^1(\sU;\ee^\Phi)$ that satisfies
    \begin{align}
      \label{eq:divergence}
      \left\{
      \begin{aligned}
        &\nabla \cdot \vec{F} = g \text{ in } \sU, \\[2mm]
        &\vec{F} = 0 \text{ on } \partial \sU, \\[2mm]
        &\| \vec{F} \|_{L^2(\sU; \lfloor \nabla \Phi \rceil^2
          \ee^\Phi)} + \| \nabla \vec{F} \|_{L^2(\sU; \ee^\Phi)}
        \le C_{\sD} \| g \|_{L^2(\sU ; \ee^\Phi)}.
      \end{aligned}
          \right.
    \end{align}
  \item {\bf Poincaré-Lions inequality.} There is
    $C_{\text{\tiny \emph{PL}}}>0$ so that for any
    $h \in L^2(\sU; \ee^\Phi)$ one has
    \begin{align}
      \label{eq:poincare-lions}
      \left\| h - \left( \int_\sU h \right)\ee^{-\Phi}
      \right\|_{L^2(\sU ; e^\Phi)} \le C_{\text{\tiny
      \emph{PL}}}
      \left\| \nabla h + h \nabla \Phi
      \right\|_{(H^1_0(\sU;\ee^\Phi))'}
    \end{align}
    where $(H^1_0(\sU;\ee^\Phi))'$ is the standard dual space.
  \item {\bf Stokes inequality.} There is
    $C_{\text{\tiny \emph{S}}}>0$ so that for any
    $\vec{s} : \sU \to \R^n$ in $L^2(\sU;\ee^\Phi)$ with
    $\int_\Sigma \vec{s} = 0$, the unique solution
    $(\vec{u},p) \in H^1(\sU;\ee^\Phi) \times L^2(\sU;\ee^\Phi)$ with
    $\int_\sU p =0$ to
    \begin{equation}
      \label{eq:stokes}
      \begin{dcases}
        - \nabla \cdot \left( \nabla + \nabla \Phi  \right)
        \vec{u} + \left( \nabla + \nabla \Phi \right) p =
        \vec{s} \quad \text{in } \sU, \\[2mm]
        \nabla \cdot \vec{u} = 0 \quad \text{in } \sU, \\[2mm]
        \vec{u} = 0 \quad \text{on } \partial \sU,
      \end{dcases}
    \end{equation}
    satisfies
    \begin{align}
      \label{eq:stokes-estim}
      \left\| \vec{u} - \left( \int_\sU \vec{u} \right)\ee^{-\Phi}
      \right\|_{L^2(\sU;\lfloor \nabla \Phi \rceil^2 \ee^\Phi)} +
      \left\| \left( \nabla + \nabla \Phi \cdot \right) \vec{u}
      \right\|_{L^2(\sU;\ee^\Phi)}
      + \left\| p \right\|_{L^2(\sU;\ee^\Phi)}
      \le C_{\text{\tiny \emph{S}}} \left\| \vec{s}
      \right\|_{L^2(\sU;\ee^\Phi)}.
    \end{align}
  \item {\bf Korn inequalities.} There is
    $C_{\text{\tiny \emph{K}}}>0$ so that for any
    $\vec{u} : \sU \to \R^n$ in $H^1(\sU;\ee^\Phi)$ satisfying
    \begin{align}
      \label{eq:korn-average-conditions}
      \forall \, i,j=1,\dots,n, \quad
      \int_\sU \left( \partial_{z_i} + \partial_{z_i} \Phi
      \right) \vec{u}_j  = \int_\sU \left( \partial_{z_j} +
      \partial_{z_j} \Phi \right) \vec{u}_i
    \end{align}
    it holds
    \begin{align}
      \label{eq:korn-statement}
      \left\| \left( \nabla + \nabla \Phi \cdot \right) \vec{u}
      \right\|_{L^2(\sU ; \ee^\Phi)}  \le C_{\text{\tiny
      \emph{K}}}
      \left\| \left( \nabla + \nabla \Phi \cdot
      \right)^{\sym} \vec{u}
      \right\|_{L^2(\sU;\ee^\Phi)}
    \end{align}
    with
    \begin{align*}
      \left\| \left( \nabla + \nabla \Phi \cdot
      \right)^{\sym} \vec{u}
      \right\|_{L^2(\sU;\ee^\phi)} ^2 := \sum_{i,j =1,\dots,n}
      \left\| \frac{\left( \partial _{z_i} + \partial_{z_i} \Phi
      \right) \vec{u}_j + \left( \partial_{z_j} + \partial_{z_j}
      \Phi \right) \vec{u}_i}{2} \right\|_{L^2(\sU;\ee^\Phi)} ^2.
    \end{align*}

  \end{enumerate}
\end{theorem}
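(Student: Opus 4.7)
The four inequalities form a hierarchy: the weighted divergence inequality (1) is the fundamental object; Poincaré-Lions (2) is its dual; Stokes (3) follows by a Lax-Milgram argument on divergence-free fields together with a de Rham / Poincaré-Lions recovery of the pressure; Korn (4) follows from Poincaré-Lions applied to the antisymmetric part of the weighted gradient via a commutator identity. I would prove them in this order.

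For the divergence inequality, the plan is a Bogovskiǐ-type construction adapted to the weight and to the possibly unbounded geometry. First I would cover $\sU$ by a locally finite family of balls $B_{i}$ of radius comparable to $\epsilon \lfloor \nabla \Phi \rceil^{-1}$, so that interior balls are compactly contained in $\sU$ while boundary balls lie in charts where $\partial \sU$ is a Lipschitz graph of slope at most $1/8$ --- this is exactly what the $\epsilon$-regularity condition provides. On each ball the hypothesis $|\nabla^{2} \Phi| \lesssim 1 + |\nabla \Phi|$ forces $\Phi$ to oscillate by $O(1)$, so $\ee^{\Phi}$ is comparable to a constant $c_{i}$ and the classical unweighted Bogovskiǐ construction produces a local solution $\vec{F}_{i} \in H^{1}_{0}(B_{i})$ of $\nabla \cdot \vec{F}_{i} = g_{i}$ for any $g_{i}$ supported in $B_{i}$ with $\int g_{i}=0$; the scaling of the balls produces exactly the weight $\lfloor \nabla \Phi \rceil^{2}$ on the $L^{2}$-norm side of~\eqref{eq:divergence}. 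The main obstacle --- and the heart of the argument --- is the global redistribution of the local averages $\bar g_{i} := \int_{B_{i}} g$: this reduces to a discrete divergence problem on the combinatorial adjacency graph of the covering, and the weighted Poincaré-Wirtinger inequality in the hypothesis is precisely what gives the associated graph Laplacian a spectral gap compatible with the $\lfloor \nabla \Phi \rceil^{-1}$-uniform bound.

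Poincaré-Lions then follows by duality: given $h \in L^{2}(\sU ; \ee^{\Phi})$ with $\int h = 0$, the divergence inequality produces $\vec{F} \in H^{1}_{0}(\sU ; \ee^{\Phi})$ with $\nabla \cdot \vec{F} = -h$ and $\| \vec{F} \|_{H^{1}_{0}(\ee^{\Phi})} \lesssim \| h \|_{L^{2}(\ee^{\Phi})}$, and integration by parts gives
\begin{equation*}
  \| h \|_{L^{2}(\ee^{\Phi})}^{2}
  = - \int_{\sU} h\, \ee^{\Phi}\, \nabla \cdot \vec{F} \dd z
  = \int_{\sU} \bigl( \nabla h + h \nabla \Phi \bigr) \cdot \vec{F}\, \ee^{\Phi} \dd z
  \le \bigl\| \nabla h + h \nabla \Phi \bigr\|_{(H^{1}_{0}(\ee^{\Phi}))'} \| \vec{F} \|_{H^{1}_{0}(\ee^{\Phi})};
\end{equation*}
the case $\int h \ne 0$ is handled by subtracting $(\int h)\ee^{-\Phi}$. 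For Stokes, the bilinear form $a(\vec{u},\vec{v}) := \int_{\sU} (\nabla + \nabla \Phi \cdot)\vec{u} : (\nabla + \nabla \Phi \cdot)\vec{v}\, \ee^{\Phi} \dd z$ is coercive on $V := \{\vec{v} \in H^{1}_{0}(\sU ; \ee^{\Phi}) : \nabla \cdot \vec{v} = 0\}$ by the weighted Poincaré inequality, and Lax-Milgram yields $\vec{u}$; the pressure $p$ is then recovered from the residual linear form (which vanishes on $V$) through the closed-range statement that $\nabla \cdot$ maps $H^{1}_{0}(\sU;\ee^{\Phi})$ onto $\{q \in L^{2}(\sU ; \ee^{\Phi}) : \int q = 0\}$, which is the operator form of Poincaré-Lions.

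For Korn, writing $D_{i} := \partial_{z_{i}} + \partial_{z_{i}} \Phi$, the operators $D_{i}$ commute pairwise because $\partial_{i} \partial_{j} \Phi$ is symmetric, so the classical Korn identity persists in weighted form:
\begin{equation*}
  D_{i} D_{j} \vec{u}_{k}
  = D_{i} \bigl(D \vec{u}\bigr)^{\sym}_{jk}
  + D_{j} \bigl(D \vec{u}\bigr)^{\sym}_{ki}
  - D_{k} \bigl(D \vec{u}\bigr)^{\sym}_{ij}.
\end{equation*}
Hence $\bigl\| (\nabla + \nabla \Phi)\bigl(D \vec{u}\bigr)^{\anti}_{jk} \bigr\|_{(H^{1}_{0}(\ee^{\Phi}))'} \lesssim \bigl\| (\nabla + \nabla \Phi \cdot)^{\sym} \vec{u} \bigr\|_{L^{2}(\ee^{\Phi})}$, and Poincaré-Lions applied to each antisymmetric component gives
\begin{equation*}
  \Bigl\| (D \vec{u})^{\anti}_{jk} - \Bigl(\int_{\sU} (D \vec{u})^{\anti}_{jk} \Bigr)\ee^{-\Phi} \Bigr\|_{L^{2}(\ee^{\Phi})}
  \lesssim \bigl\| (\nabla + \nabla \Phi \cdot)^{\sym} \vec{u} \bigr\|_{L^{2}(\ee^{\Phi})}.
\end{equation*}
The symmetry conditions~\eqref{eq:korn-average-conditions} are precisely what kills the means $\int (D \vec{u})^{\anti}_{jk}$; combined with the trivial bound on the symmetric part, this yields~\eqref{eq:korn-statement}.
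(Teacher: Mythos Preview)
Your treatment of parts (2), (3) and (4) matches the paper almost exactly: Poincaré--Lions is obtained by duality from the divergence inequality, Stokes by Lax--Milgram on divergence-free fields plus the closed-range recovery of the pressure, and Korn by applying Poincaré--Lions to the antisymmetric part via the second-order identity (the paper does this after the substitution $\vec{v}=\vec{u}\,\ee^{\Phi}$, which turns your weighted $D_i$'s into ordinary partial derivatives, but this is cosmetic).

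For part (1), however, your route differs from the paper's in one substantive way. You propose to localize $g$ directly with a partition of unity, solve with the unweighted Bogovski\v{\i} on each ball, and redistribute the local averages $\bar g_i$ by solving a discrete divergence problem on the adjacency graph of the covering, appealing to the continuous Poincaré inequality to get the graph spectral gap. That can be made to work, but the passage from the continuous weighted Poincaré inequality to the needed discrete one is a real piece of work you would still have to write out. The paper sidesteps this entirely: it first solves $\nabla\cdot\vec{F}_0=g$ at the $L^2\!\to\! L^2$ level by an elliptic problem (Lax--Milgram for $\nabla\cdot(\nabla\psi+\psi\nabla\tilde\Phi)=g$ with Neumann condition, using the Poincaré inequality for the \emph{modified} potential $\tilde\Phi:=\Phi+2\ln\lfloor\nabla\Phi\rceil$), obtaining $\|\vec{F}_0\|_{L^2(\lfloor\nabla\Phi\rceil^2\ee^\Phi)}\lesssim\|g\|_{L^2(\ee^\Phi)}$. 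Only then does it localize, but with $g_k:=\nabla\cdot(\theta_k\vec{F}_0)$ rather than $\theta_k g$: since $g_k$ is a divergence and $\vec{F}_0\cdot\vec{n}=0$ on $\partial\sU$, each $g_k$ has zero average automatically, and no graph redistribution is needed at all. The local Bogovski\v{\i} step and the scaling analysis are then the same as yours. The paper's device trades your discrete graph argument for a single weighted elliptic estimate, which is cleaner and uses the Poincaré hypothesis directly in continuous form.
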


\begin{remark}\label{thm:boundary-korn}
  Without the assumption~\eqref{eq:korn-average-conditions}, the
  estimate~\eqref{eq:korn-statement} holds if the boundary prohibits
  all rotations and drifts. Let us illustrate this with the case of
  non-penetration boundary condition $\vec{n} \cdot \vec{u} = 0$ on
  $\partial \sU$. Let $p = \int_{\sU} z\, \ee^{-\Phi} \in \R^n$ be the
  weighted centre and for $1\le i < j \le n$ let
  $E^{ij} = e^i \otimes e^j - e^j \otimes e^i$, where $e^1,\dots,e^n$
  is an orthonormal basis of $\R^n$. Then suppose that there are
  scalar functions $\chi^i, \chi^{ij} \in C^1_c(\overline{\sU})$,
  $1 \le i \not = j \le n$, such that for $1 \le k \not = l \le n$,
  \begin{equation}
    \label{eq:boundary-basis-chi}
    \left\{
      \begin{lgathered}
        \int_{\partial \sU} \chi^i\, \left( \vec{n} \cdot e^{k}
        \right) = \delta_{i=k}, \quad \int_{\partial \sU}
        \chi^i\, \left[ \vec{n}
          \cdot \left( E^{kl} (z-p) \right) \right] = 0, \\
        \int_{\partial \sU} \chi^{ij}\, \left( \vec{n} \cdot
          e^{k}\right) = 0, \quad \int_{\partial \sU} \chi^{ij}\,
        \left[ \vec{n} \cdot \left( E^{kl} (z-p) \right) \right]
        = \delta_{(i,j)=(k,l)}.
      \end{lgathered}
    \right.
  \end{equation}
  Then \eqref{eq:korn-statement}
  holds. When~\eqref{eq:boundary-basis-chi} is not imposed, the
  symmetric gradient has a non-trivial kernel. This kernel can,
  e.g., be controlled by $\|\vec{u} \cdot \nabla \Phi\|$, as
  in~\cite{carrapatoso2020weighted}, if $\Phi$ has no rotation
  symmetry, which can be quantified by the constant
  \begin{equation*}
    \sup_{J \in \mathfrak S}
    \frac{\displaystyle \left| \int_{\sU} J(z)  \nabla
        \ee^{-\Phi} \right|}{\displaystyle \left\| J \right\|}
  \end{equation*}
  where the supremum is taken over the set of affine functions
  \begin{equation*}
    \mathfrak S := \left\{ J(z) =\sum_{i,j} b_{ij} E^{ij}(z-p) +
      \sum_{i} b_i e^{i}, \quad b_i, b_{ij} \in \R, \quad
    \text{ compatible with the boundary} \right\}.
  \end{equation*}
\end{remark}

\begin{remark}
  The above form of the divergence inequality seems more general than
  the existing literature, due to the addition of the potential force
  and also its combination with a boundary. We refer
  to~\cite{MR553920,MR631691,duran-2012-bogovskii,geissert-heck-hieber-2006-bogoskii,acosta-duran-2017-divergence-operator,danchin-mucha-2013-divergence,duran-garcia-2010-divergence-inequality,bourgain-brezis-2002-divergence-equation,duran-muschietti-2001-right-inverse-divergence,borchers-sohr-1990-rotation-divergence-equation}
  among an important literature. What is called Poincaré-Lions
  inequality above is used in~\cite{MR0521262} and mentioned
  in~\cite{MR1932965} and we introduced the terminology
  in~\cite{carrapatoso2020weighted}. The Stokes equation is a
  classical equation of fluid dynamics, see
  \cite{galdi-2011-navier-stokes,hieber-saal-2018-stokes-equation} for
  an overview and
  \cite{geissert-heck-hieber-sawada-2010-stokes-unbounded} for some
  extension to unbounded domains without potential. The Korn
  inequality was discovered
  in~\cite{korn-1906-abhandlungen-elastizitaetstheorie-ii,
    korn-1908-solution,
    korn-1909-ueber-ungleichungen-theorie-schwingungen-rolle} in the
  case of bounded domain with Dirichlet conditions (see
  also~\cite{MR1368384} for a somehow recent review), and extended to
  non-penetration conditions in~\cite{MR1932965}, and to the whole
  space with confining potential in~\cite{Duan_2011} (non-constructive
  argument) and~\cite{carrapatoso2020weighted} (constructive
  argument). Our statement includes these previous works and extend
  them. We deduce the Korn inequality from the Poincaré-Lions
  inequality arguing as in~\cite{carrapatoso2020weighted}, however the
  Poincaré-Lions inequality is proved in new cases and by a novel
  method.
\end{remark}

\section{Weighted divergence and related inequalities}
\label{sec:divergence}

We prove \cref{theo:ineq} in this section. Let us denote
$\nabla^\Phi := \nabla + \nabla \Phi$.

\subsection{The Poincaré inequality}
\label{sec:poincare}

We first extend the standard Poincaré-Wirtinger inequality. In the
case $\sU=\R^n$, standard arguments, see for instance~\cite[Proof of
Theorem~6.2.21]{deuschel-stroock-1989-large} and \cite[Theorem~A.1 in
A.19]{villani-2009-hypocoercivity}, show that the Poincaré inequality
follows from $\frac{|\nabla \Phi|^2}{2} - \Delta \Phi \to \infty$ as
$|z| \to \infty$. The additional weight $\lfloor \nabla \Phi \rceil^2$
in~\eqref{eq:x-coerc} is classically obtained under the assumption
$|\nabla^2 \Phi| \lesssim 1 + |\nabla \Phi|$, see for
instance~\cite[Lemma~A.24 in
Section~A.23]{villani-2009-hypocoercivity}. In order to deal with
boundaries, we will assume furthermore that
$\vec{n} \cdot \nabla \Phi \ge 0$ on $\partial \sU$, where $\vec{n}$
is the unit outgoing normal on $\sU$. (Note that the latter assumption
could likely be replaced by simply assuming the $\epsilon$-regularity
of $(\sU,\Phi)$).

\begin{lemma}\label{thm:poincare-bounded}
  Consider $n \ge 1$, $\sU \subset \R^n$ open,
  $\Phi : \sU \to \R$ in $C^2$ so that
  $\vec{n} \cdot \nabla \Phi \ge 0$ on $\partial \sU$ and
  $\frac{|\nabla \Phi|^2}{2} - \Delta \Phi \to \infty$ as
  $|z| \to \infty$. Then it satisfies the weighted
  Poincaré-Wirtinger inequality
  \begin{equation}
    \label{eq:pw-weight}
    \forall \, \rho \in H^1(\sU ; \ee^{\Phi}) \text{ with }
    \int_\sU \rho =0, \quad
    \int_{\sU} \left| \nabla^\Phi \rho \right|^2
    \ee^{\Phi} \gtrsim \int_{\sU} \rho^2\,
    \lfloor \nabla \Phi \rceil^2\, \ee^{\Phi}.
  \end{equation}
\end{lemma}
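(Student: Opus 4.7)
The plan is to substitute $\rho = h\,\ee^{-\Phi}$, under which $\nabla^\Phi\rho = \ee^{-\Phi}\nabla h$ so that $|\nabla^\Phi\rho|^2 \ee^{\Phi} = |\nabla h|^2\ee^{-\Phi}$ and $\rho^2\ee^{\Phi} = h^2\ee^{-\Phi}$, and the constraint $\int_\sU \rho\dd z = 0$ translates into $\int_\sU h\,\ee^{-\Phi}\dd z = 0$. The statement thus reduces to the classical weighted Poincaré inequality on $\sU$ equipped with $\ee^{-\Phi}\dd z$:
\[
    \int_\sU h^2 \lfloor\nabla\Phi\rceil^2\,\ee^{-\Phi}\dd z \;\lesssim\; \int_\sU |\nabla h|^2\,\ee^{-\Phi}\dd z,
\]
for $h$ with $\int_\sU h\,\ee^{-\Phi}\dd z = 0$.

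First I would establish the \emph{unweighted} Poincaré on this space via the Lyapunov method. For the generator $L = \Delta - \nabla\Phi\cdot\nabla$, the test function $W = \ee^{\Phi/2}$ satisfies
\[
    -\frac{LW}{W} = \frac{1}{2}\left(\frac{|\nabla\Phi|^2}{2} - \Delta\Phi\right) \xrightarrow[|z|\to\infty]{} +\infty
\]
by hypothesis. Combined with the local Poincaré inequality on any sufficiently large compact $K \subset \sU$ (trivial since $\ee^{-\Phi}$ is bounded above and below on $K$), the Bakry--Émery/Cattiaux--Guillin Lyapunov argument as in~\cite[Lemma~A.24]{villani-2009-hypocoercivity} produces the quantitative unweighted Poincaré with explicit constant.

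To then insert the weight $|\nabla\Phi|^2$, I would integrate by parts, using $\nabla\ee^{-\Phi} = -\nabla\Phi\,\ee^{-\Phi}$ and the divergence theorem:
\[
    \int_\sU h^2|\nabla\Phi|^2\ee^{-\Phi}\dd z = \int_\sU \bigl(2h\,\nabla h\cdot\nabla\Phi + h^2\Delta\Phi\bigr)\ee^{-\Phi}\dd z - \int_{\partial\sU} h^2(\vec{n}\cdot\nabla\Phi)\,\ee^{-\Phi}\dd S.
\]
The boundary integral is non-negative thanks to the hypothesis $\vec{n}\cdot\nabla\Phi \ge 0$ and can be discarded, while Young's inequality $2h\,\nabla h\cdot\nabla\Phi\le \tfrac12 h^2|\nabla\Phi|^2+2|\nabla h|^2$ absorbs half of the left-hand side. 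Rearranging yields
\[
    \int_\sU h^2\,G\,\ee^{-\Phi}\dd z \le 2\int_\sU |\nabla h|^2\,\ee^{-\Phi}\dd z, \qquad G := \tfrac12|\nabla\Phi|^2-\Delta\Phi.
\]

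The main obstacle is that $G$ is a priori only the coercivity quantity and need not be comparable to the target weight $\lfloor\nabla\Phi\rceil^2 = 1+|\nabla\Phi|^2$. To close this gap I would invoke the growth bound $|\nabla^2\Phi| \lesssim 1+|\nabla\Phi|$ standard in the paper's $\epsilon$-regularity framework and highlighted in the paragraph immediately preceding the lemma. Young's inequality then gives $|\Delta\Phi|\le\tfrac14|\nabla\Phi|^2 + C$, so $G \ge \tfrac14|\nabla\Phi|^2 - C$; combining with the unweighted Poincaré from the previous step to absorb the residual $\int h^2\ee^{-\Phi}\dd z$ delivers the full weighted inequality, completing the proof.
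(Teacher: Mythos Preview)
Your proof is correct and essentially identical to the paper's: the paper packages your Lyapunov computation and your integration-by-parts step into the single ground-state identity
\[
\int_\sU|\nabla^\Phi\rho|^2\ee^\Phi = \int_\sU|\nabla(\rho\ee^{\Phi/2})|^2 + \int_\sU\rho^2\Bigl(\tfrac{|\nabla\Phi|^2}{4}-\tfrac{\Delta\Phi}{2}\Bigr)\ee^\Phi + \tfrac12\int_{\partial\sU}\rho^2(\vec{n}\cdot\nabla\Phi)\ee^\Phi,
\]
and then proceeds exactly as you do (local Poincaré on a ball for the unweighted inequality, then $|\nabla^2\Phi|\lesssim 1+|\nabla\Phi|$ to upgrade to the weighted one). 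The only point you should make explicit is that your Lyapunov step on a domain with boundary also produces a surface term in the integration by parts, and it has the favourable sign for the same reason as in your third step.
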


\begin{proof}
  Assume first that $\rho = 0$ on $\partial \sU$. Then a standard
  calculation yields
  \begin{equation}
    \label{eq:strook}
    \begin{split}
      \int_{\sU} \left| \nabla^\Phi \rho \right|^2 \ee^\Phi
      & = \int_{\sU} \left|\nabla \left( \rho \ee^\Phi
        \right) \right|^2 \ee^{-\Phi} \\
      & = \int_{\sU} |\nabla(\rho \ee^{\Phi/2})|^2+ \int_\sU
      \rho^2 \left( \frac{|\nabla \Phi|^2}{4} - \frac{\Delta
          \Phi}{2} \right) \ee^{\Phi} + \frac12 \int_{\partial\sU} \rho^2
      \left( \vec{n} \cdot \nabla \Phi \right) \ee^\Phi.
    \end{split}
  \end{equation}
  Since we assume that
  $\frac{|\nabla \Phi|^2}{2} - \Delta \Phi \to \infty$ at
  $|z| \to \infty$, this controls the $L^2(\sU;\ee^\Phi)$ norm of
  $\rho$ for large $z$. As explained in \cite[Thm~A.1 in
  A.19]{villani-2009-hypocoercivity} this can be combined with a
  standard Poincaré inequality on a ball to deduce
  \begin{equation}
    \label{eq:pw-noweight}
    \int_{\sU} \left| \nabla^\Phi \rho \right|^2
    \ee^{\Phi} \gtrsim \int_{\sU} \rho^2\, \ee^{\Phi}
  \end{equation}
  and if moreover $|\nabla^2 \Phi| \lesssim 1 + |\nabla \Phi|$
  one has
  $\frac{|\nabla \Phi|^2}{2} - \Delta \Phi \gtrsim |\nabla
  \Phi|^2$ and thus the combination of~\eqref{eq:strook}
  and~\eqref{eq:pw-noweight} implies~\eqref{eq:pw-weight}.
\end{proof}

\subsection{The divergence inequality $L^2 \to L^2$}

The assumptions on the potential $\Phi$ imply the following
Poincaré-type inequality on
$\tilde \Phi := \Phi + 2 \ln \lfloor \nabla \Phi \rceil$: for any
$h \in H^1(\sU;\ee^{\tilde \Phi})$ with $\int_\sU h =0$,
\begin{align}
  \label{eq:poincare-higher-weight}
  \int_{\sU} h^2 \lfloor \nabla \Phi \rceil^2\ee^{\tilde
  \Phi} \dd z
  \lesssim \int_{\sU} \left| \nabla h + h \nabla \Phi \right|^2
  \ee^{\tilde \Phi} \dd z
\end{align}
with quantitative estimate on the constant (it follows from applying
the Poincaré inequality on $\ee^{-\Phi}$ to $h$ and
$h \lfloor \nabla \Phi \rceil$ and combining linearly the two
estimates by the assumption on $|\nabla^2 \Phi|$).

Given $g \in L^2(\sU;\ee^\Phi)$ with $\int_{\sU} g =0$ we consider the
problem
\begin{align}
  \label{eq:div1}
  \nabla \cdot \vec{F}_0 = g \text{ in } \sU,
  \quad \vec{F}_0 \cdot \vec{n} = 0 \text{ on } \partial \sU
\end{align}
for a vector field $\vec{F}_0 : \sU \to \R^n$ in
$L^2(\sU;\ee^{\tilde \Phi})$. To solve it, we consider the following
elliptic problem
\begin{equation}
  \label{eq:elliptic-div}
  \nabla \cdot (\nabla \psi + \psi \nabla \tilde \Phi) = 0
  \text{ in } \sU, \qquad
  (\nabla \psi + \psi \nabla \tilde \Phi) \cdot \vec{n} = 0
  \text{ on } \partial\sU,
\end{equation}
and then define $\vec{F}_0:= \nabla \psi + \psi \nabla \tilde
\Phi$. The existence of a unique solution
to~\eqref{eq:elliptic-div} in $L^2(\ee^{\tilde \Phi})$ follows
from~\eqref{eq:poincare-higher-weight} and the Lax-Milgram
theorem. This solution then satisfies
\begin{align*}
  \| \vec{F}_0 \|_{L^2(\sU;\lfloor \nabla \Phi \rceil^2
  \ee^\Phi)} = \| \vec{F}_0 \|_{L^2(\sU ;\ee^{\tilde \Phi})} \lesssim
  \| g \|_{L^2(\sU ; \lfloor \nabla \Phi \rceil^{-2} \ee^{\tilde
  \Phi})} = \| g \|_{L^2(\sU ; \ee^\Phi)}.
\end{align*}

(Note that the same argument also shows
$\| \vec{F}_0 \|_{L^2(\sU;\ee^{\tilde \Phi})} \lesssim \| g
\|_{\big(H^1_0(\sU ; \ee^{\tilde \Phi})\big)'}$).

\subsection{The divergence inequality $L^2 \to H^1$}

The idea is to use the $L^2 \to L^2$ divergence inequality from
the previous subsection to reduce the problem to balls $B_k$ such
that on $B_k$ the weight $\ee^{\Phi}$ is not changing
significantly and that $B_k \cap \sU$ is star-shaped. After a
change of variable to flatten the boundary we can then use the
explicit representation formula of Bogovski\v{\i}. To start, note
that $|\nabla^2 \Phi| \lesssim 1+|\nabla \Phi|$ implies by a
direct Gronwall argument that there exists $\epsilon > 0$ so that
\begin{equation}
  \label{eq:var-grad-phi}
  2 \Big(
  1 + |\nabla \Phi(x)|
  \Big)
  \ge
  1 + |\nabla \Phi(y)|
  \ge
  \frac 12 \Big(
  1 + |\nabla \Phi(x)|
  \Big)
  \qquad
  \forall x,y \in \sU \text{ with }
  |x-y| \le 2 \epsilon,
\end{equation}
which directly implies
\begin{equation}
  \label{eq:w-grad-phi}
  4
  \lfloor\nabla \Phi(x)\rceil
  \ge
  \lfloor\nabla \Phi(y)\rceil
  \ge
  \frac 14
  \lfloor\nabla \Phi(x)\rceil
  \qquad
  \forall x,y \in \sU \text{ with }
  |x-y| \le 2 \epsilon.
\end{equation}
By reducing $\epsilon$ if necessary, we may assume without loss of
generality that~\eqref{eq:var-grad-phi} and \eqref{eq:w-grad-phi} hold
with the same $\epsilon$ as the \(\epsilon\)-regularity of
\((\sU,\Phi)\) in \cref{def:weighted-reg}. We then need the following
covering lemma.
\begin{lemma}
  Consider a domain $\sU \subset \R^n$ with $C^2$ potential
  $\Phi : \sU \to \R$ and $\epsilon > 0$ so that the
  \(\epsilon\)-regularity of \((U,\Phi)\) from \cref{def:weighted-reg}
  and \eqref{eq:w-grad-phi} holds. There is a cover
  $(B_k)_{k \in \mathcal{I}}$ of $\sU$ and a subordinate partition of
  unity $(\theta_k)_{k \in \mathcal{I}}$ such that
  \begin{enumerate}
  \item $B_k=B(z_k,r_k)$ is either a ball with $z_k \in \sU$ and
    $r_k = (\epsilon/40) \lfloor \nabla \Phi \rceil^{-1}$ and
    $B_k \subset \sU$ or a ball around $z_k \in \partial \sU$ of
    radius $r_k = \epsilon \lfloor \nabla \Phi \rceil^{-1}$ in which
    case there exists $B_k' = B(y_k,r_k/4)$ and $|z_k-y_k|=r_k/2$
    so that $B_k$ is star-shaped with respect to $B_k'$,
  \item each point is covered at most $C_d$ times where $C_d \in \N$
    only depends on the dimension $d$,
  \item one has
    $\| \nabla \theta_k \|_{L^\infty(B_k)} \lesssim r_k^{-1} $ and
    $ \ee^{\Phi(z_k)} \lesssim \ee^{\Phi(z)} \lesssim
    \ee^{\Phi(z_k)} $ for $z \in B_k$.
  \end{enumerate}
\end{lemma}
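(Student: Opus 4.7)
The plan is to build the cover in two kinds of pieces --- interior balls fully contained in $\sU$ and boundary balls centred on $\partial\sU$ --- using a scale function $r(z) := \epsilon \lfloor \nabla \Phi(z) \rceil^{-1}$ that, by \eqref{eq:var-grad-phi}--\eqref{eq:w-grad-phi}, varies by at most a factor $4$ over distances $\le 2\epsilon\lfloor \nabla\Phi(z)\rceil^{-1}$. Slow variation of the scale is the workhorse for bounded multiplicity and for the comparability of $\ee^{\Phi}$ on each ball, item (3). I would first partition $\sU$ into the boundary region $\sU_{\text{bd}} := \{z\in\sU : d(z,\partial\sU) < r(z)/2\}$ and the interior region $\sU_{\text{int}} := \sU \setminus \sU_{\text{bd}}$; by the slow variation, if $z\in \sU_{\text{int}}$ then $B(z, r(z)/40) \subset \sU$.

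To cover $\sU_{\text{int}}$ I apply the Besicovitch covering theorem to the family of balls $\{B(z, r(z)/40) : z \in \sU_{\text{int}}\}$, which extracts a countable subfamily $\{B(z_k,r_k)\}$ (with $r_k = r(z_k)/40$) covering $\sU_{\text{int}}$ with multiplicity bounded by a dimensional constant. To cover $\sU_{\text{bd}}$, I cover $\partial\sU$ by Besicovitch: at each $z \in \partial\sU$ the $\epsilon$-regularity of \cref{def:weighted-reg} gives an isometry $T_z$ with $T_z(z)=0$ and a $1/8$-Lipschitz graph $f_z$ on $B(z, r(z))$; take the Besicovitch subfamily $\{B(z_k, r_k)\}_{z_k \in \partial\sU}$ with $r_k = r(z_k)$. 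Using \eqref{eq:w-grad-phi}, these boundary balls suffice to cover all of $\sU_{\text{bd}}$ (possibly after enlarging interior balls by a universal factor), and the slowly varying scale keeps the total multiplicity dimensional.

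For each boundary ball, set $y_k := T_k^{-1}(0,\dots,0,r_k/2)$ so that $|z_k-y_k|=r_k/2$ and $B_k' := B(y_k, r_k/4)$. I would then verify the star-shapedness of $B_k\cap\sU$ with respect to $B_k'$ directly in the local coordinates: for $a = (a',a_n) \in B_k\cap\sU$ (so $a_n > f_k(a')$) and $b=(b',b_n) \in B_k'$ (so $|b'| < r_k/4$ and $b_n > r_k/4$), the segment $c_t = (1-t)a + tb$ satisfies
\begin{equation*}
c_{t,n} - f_k(c_t')
\ge (1-t)\bigl(a_n - f_k(a')\bigr)
+ t\bigl(b_n - f_k(a') - \tfrac{1}{8}|b'-a'|\bigr),
\end{equation*}
by the $1/8$-Lipschitz bound on $f_k$, and both terms are nonnegative thanks to the quantitative geometry of $B_k$ and $B_k'$ (the second uses $b_n \ge r_k/4$ together with $|a'|, |b'-a'|$ controlled so that $f_k(a') + |b'-a'|/8$ stays below $r_k/4$; if the constants are tight, one shrinks $r_k$ by a universal factor).

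Finally, I construct the partition of unity by standard mollification: pick $\psi_k \in C^\infty_c(B_k)$ with $\psi_k \equiv 1$ on $\tfrac12 B_k$ and $\|\nabla\psi_k\|_\infty \lesssim r_k^{-1}$; bounded multiplicity and the covering property force $\sum_k \psi_k \ge 1$ pointwise on $\sU$, so $\theta_k := \psi_k / \sum_j \psi_j$ is well defined and inherits the gradient bound from the slow variation of the scale (neighbouring $r_j$ are comparable to $r_k$). The bound $\ee^{\Phi(z_k)} \lesssim \ee^{\Phi(z)} \lesssim \ee^{\Phi(z_k)}$ on $B_k$ follows from $|\nabla^2\Phi| \lesssim 1+|\nabla\Phi|$ via a direct Gronwall argument on the radius $r_k = O(\lfloor\nabla\Phi\rceil^{-1})$. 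The main technical difficulty is keeping the geometric constants consistent across the two families and at their interface --- specifically, establishing star-shapedness with the tight radius $r_k/4$ against the worst-case boundary Lipschitz slope $1/8$, which may require shrinking the universal constant $40$ or the radius of $B_k'$ slightly; the rest is standard Besicovitch bookkeeping.
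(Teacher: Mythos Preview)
Your plan is essentially the paper's, with one structural difference: the paper does not split into boundary and interior regions and run two separate covering arguments, but instead throws both ball families together --- $\tilde B(z)=B(z,r_b(z)/10)$ for $z\in\partial\sU$ and $\tilde B(z)=B(z,r_i(z)/10)$ for interior $z$ with $B(z,r_i(z))\subset\sU$ --- and applies a single \emph{Vitali} covering lemma. Vitali produces a disjoint subfamily whose $5$-dilates cover $\sU$; the balls $B_k$ of the lemma are then the $10$-dilates, and bounded overlap follows from disjointness of the $\tilde B(z_k)$ together with the slow variation \eqref{eq:w-grad-phi}. This is slightly cleaner than your two Besicovitch passes because it sidesteps the interface bookkeeping and, more importantly, it automatically guarantees that the \emph{half}-balls $\tfrac12 B_k=5\tilde B(z_k)$ already cover $\sU$, so that $\sum_k\psi_k\ge 1$ is immediate in the partition-of-unity step. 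In your write-up this is a small gap: Besicovitch only gives that the $B_k$ cover, not the $\tfrac12 B_k$, so ``bounded multiplicity and the covering property force $\sum_k\psi_k\ge 1$'' is not quite right --- the standard fix is to run Besicovitch at half the final radius and then double. Likewise, your remark that the boundary balls cover $\sU_{\text{bd}}$ ``possibly after enlarging interior balls'' points at the wrong family; what you need is to run the boundary Besicovitch at a fraction of $r_k$ so that the full balls $B(z_k,r_k)$ absorb the whole $r(z)/2$-collar.

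Your explicit star-shapedness computation, and the observation that the constants $(1/8,\ r_k/4,\ r_k/2)$ are borderline and may need a universal shrinkage, is correct and in fact more careful than the paper, which simply asserts that the star-shaped ball ``follows from the Lipschitz bound in \cref{def:weighted-reg}''.
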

\begin{proof}
  Introduce the radius
  $r_b(z) = \epsilon \lfloor \nabla \Phi \rceil^{-1}(z)$ for
  $z \in \partial \sU$ and
  $r_i(z) = (\epsilon/40) \lfloor \nabla \Phi \rceil^{-1}(z)$ for
  $z \in \sU$.  By~\eqref{eq:w-grad-phi} we can cover
  $\sU$ by the balls $\tilde{B}(z)=B(z,r_b(z)/10)$ for
  $z \in \partial \sU$ and the balls $\tilde{B}(z)=B(z,r_i(z)/10)$
  for $z \in \sU$ satisfying $B(z,r(z)) \subset \sU$.

  By Vitali's covering lemma, there exists a disjoint
  subcollection $(\tilde{B}(z_k))_{k\in\mathcal{I}}$ of balls
  such that $\sU \subset \bigcup_k 5\tilde{B}(z_k)$, and we
  consider the covering $(B_k)_{k \in \mathcal{I}}$. Around any
  $z \in \sU$, the radii used are comparable by
  \eqref{eq:w-grad-phi} so that the fact that the
  $(\tilde{B}(z_k))_{k\in \mathcal{I}}$ are disjoint implies that
  every point is covered at most $C_d$ times with a dimensional
  constant $C_d$. Then~\eqref{eq:w-grad-phi} and the mean-value
  theorem imply
  $\ee^{\Phi(z_k)} \lesssim \ee^{\Phi(z)} \lesssim
  \ee^{\Phi(z_k)} $ on $z \in B_k$.

  Let $\zeta \in C^\infty(\R^n)$ with $\zeta(x) = 1$ if
  $|x| \le 1/2$ and $\zeta(x) = 0$ if $|x| \ge 1$, and let
  \begin{equation*}
    \forall \, z \in \sU, \quad
    w(z) := \sum_k \zeta\left(\frac{z-z_k}{r_k}\right).
  \end{equation*}
  Then $w(z) \ge 1$ on $\sU$ and
  $|\nabla w| \lesssim \lfloor \nabla \Phi \rceil$ so that
  $\theta_k(z) := \frac{1}{w(z)}
  \zeta\left(\frac{x-z_k}{r_k}\right)$ is a partition of unity
  satisfying the claimed properties. Finally, for a ball $B_k$
  centred around a boundary point, the claimed star-shaped ball
  $B_k'$ follows from the Lipschitz bound in
  \cref{def:weighted-reg} of \(\epsilon\)-regularity.
\end{proof}

Using the partition of unity from the previous lemma, define
$g_k := \nabla \cdot ( \theta_k \vec{F}_0)$ for
$k \in \mathcal{I}$, where $\vec{F}_0$ was constructed in the
previous subsection. Each $g_k$ has support included in $B_k$ and
has zero average on $\sU \cap B_k$ due to the divergence
structure and $\vec{F}_0 \cdot \vec{n} =0$ on $\partial
\sU$. Moreover
$g_k = \nabla \theta_k \cdot \vec{F}_0 + \theta_k g$ and
$|\nabla \theta_k| \lesssim \lfloor \nabla \Phi \rceil$ on $B_k$,
therefore (using the bound on the overlaps of the covering)
\begin{align}
  \label{eq:summation-1}
  &\begin{aligned}
    \sum_{k \in \mathcal{I}} \| g_k \|^2_{L^2(\sU; \ee^\Phi)}
    & \lesssim \sum_{k \in \mathcal{I}} \int_{\sU \cap B_k} g_k^2 \ee^\Phi \dd z
    \lesssim  \sum_{k \in \mathcal{I}}
    \int_{\sU \cap B_k} \left( |\vec{F}_0|^2
      \left| \nabla \theta_k \right|^2  +
      |g|^2  \left| \theta_k \right|^2 \right) \ee^\Phi \dd z \\
    & \lesssim  \int_{\sU} \left( |\vec{F}_0|^2 \lfloor \nabla \Phi
      \rceil^2 + |g|^2\right) \ee^\Phi \dd z
    \lesssim \| g \|_{L^2(\sU ; \ee^\Phi)} ^2,
  \end{aligned}\\
  \label{eq:summation-2}
  &\sum_{k \in \mathcal{I}} \| \lfloor \nabla \Phi \rceil\, \theta_k \vec{F}_0 \|^2_{L^2(\sU; \ee^\Phi)}
    \lesssim  \int_{\sU} |\vec{F}_0|^2 \lfloor \nabla \Phi \rceil^2 \ee^{\Phi}
    \dd z
    \lesssim \| g \|_{L^2(\sU ; \ee^\Phi)} ^2.
\end{align}

On $\overline{\sU \cap B_k}$ we claim
(following~\cite{MR631691,bourgain-brezis-2002-divergence-equation})
that there exists a linear map mapping $g_k$ to a vector field
$\vec{F}_k$ on $\sU \cap B_k$ so that
\begin{equation}
  \label{eq:bogo-loc}
  \left\{
    \begin{aligned}
      &\nabla \cdot \vec{F}_k = g_k = \nabla \cdot(\theta_k \vec{F}_0)
      \text{ in } \sU \cap B_k, \\
      &\vec{F}_k = 0 \text{ on } \partial (\sU \cap B_k) \\
      &\| \vec{F}_k \|_{L^2(\sU \cap B_k)}
      \lesssim
      \| \theta_k \vec{F}_0 \|_{L^2(\sU \cap B_k)} \\
      &\| \nabla \vec{F}_k \|_{L^2(\sU \cap B_k)}
      \lesssim
      \| g_k \|_{L^2(\sU \cap B_k)} \\
    \end{aligned}
  \right.
\end{equation}
where the constants are independent of $k \in \mathcal{I}$. The
vector field $\vec{F} := \sum_{k \in \mathcal{I}} \vec{F}_k$ on
$\sU$ then solves
\begin{align}
  \label{eq:bogo}
  \left\{
  \begin{aligned}
    &\nabla \cdot \vec{F} = g \text{ in } \sU, \\
    &\vec{F} = 0 \text{ on } \partial \sU, \\
    &\| \vec{F} \|_{L^2(\sU; \lfloor \nabla \Phi \rceil^2
      \ee^\Phi)} + \| \nabla \vec{F} \|_{L^2(\sU; \ee^\Phi)}
    \lesssim \| g \|_{L^2(\sU ; \ee^\Phi)}
  \end{aligned}
  \right.
\end{align}
by
\eqref{eq:summation-1}--\eqref{eq:summation-2}--\eqref{eq:bogo-loc},
which concludes the proof.

\begin{figure}
  \centering
  \begin{tikzpicture}[scale=2]
    \path[save path=\bdd] (0,-1.5)
    .. controls (0.2,-0.5) and (-0.1,-0.5) .. (0,0)
    .. controls (0.1,0.5) .. (-0.05,1.5);
    \path[save path=\din]
    (0,-1.5)
    .. controls (0.2,-0.5) and (-0.1,-0.5) .. (0,0)
    .. controls (0.1,0.5) .. (-0.05,1.5)
    -- (2,1.5) -- (2,-1.5) -- cycle;
    \path[save path=\b] (0,0) circle (1.0);
    \path[save path=\bd] (0.5,0) circle (0.25);

    \begin{scope}
      \clip [use path=\din];
      \draw[pattern={mydlines[size=6pt,line
        width=0.5pt,angle=-45]}, pattern color=blue] [use
      path=\b]; \draw[pattern={myslines[size=6pt,line
        width=0.5pt,angle=45]}, pattern color=magenta] [use
      path=\bd];
    \end{scope}
    \draw[dotted,thin] (0,-1.5) -- (0,1.5);
    \draw[red] (0,1.5) node[anchor=north west] {$\partial\sU$};
    \draw[red] [use path=\bdd];
    \draw[blue] (60:1) node[anchor=south west] {$B_k$};
    \draw[blue] [use path=\b];
    \draw[magenta] (0.5,0.23) node[anchor=south,fill=white] {$B_k'$};
    \draw[magenta] [use path=\bd];
  \end{tikzpicture}
  \caption{A ball of the cover that intersects the boundary of $\sU$}
  \label{fig:bogo}
\end{figure}
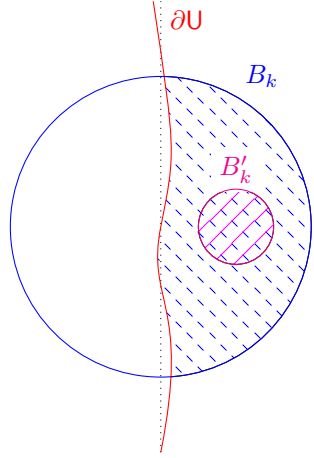

Let us now prove~\eqref{eq:bogo-loc}. We use the explicit
construction of Bogovski\v{\i}~\cite{MR553920,MR631691} (inspired
from an older idea of Sobolev~\cite{MR0165337}). As it is
important that the estimates in \eqref{eq:bogo-loc} are
independent of $k \in \mathcal{I}$ (so in particular independent
of the scale $r_k$), we explicitly scale the problem by the map
$T_k: x \to (x-y_k)/r_k$: the inner ball $B_k'$ becomes a ball
around the origin of radius $1/4$ and $T_k(B_k \cap \sU)$ is a
star-shaped domain around $T_k B_k'$ and contained in the ball of
radius $3/2$ around the origin, see \cref{fig:bogo}. We then
denote $\tilde g_k(x) = g_k(T_k x)$,
$\tilde{\vec{F}}_k(x) = r_k \vec{F}_k(T_k x)$ and
$\tilde{\vec{G}}_k(x) = r_k \vec{G}_k(T_k x) \text{ with }
\vec{G}_k =\theta_k \vec{F}_0$.  Then
$\tilde g_k = \nabla \cdot \vec{\tilde G}_k$ and \eqref{eq:bogo-loc} is
equivalent to
\begin{equation}
  \label{eq:bogo-loc-rescaled}
  \left\{
    \begin{aligned}
      &\nabla \cdot \tilde{\vec{F}}_k = \tilde g_k
      \text{ in } T_k(\sU \cap B_k), \\
      &\tilde{\vec{F}}_k = 0 \text{ on } \partial T_k(\sU \cap B_k) \\
      &\| \tilde{\vec{F}}_k \|_{L^2(T_k(\sU \cap B_k))}
      \lesssim
      \| \tilde{\vec{G}}_k \|_{L^2(T_k(\sU \cap B_k))} \\
      &\| \nabla \tilde{\vec{F}}_k \|_{L^2(T_k(\sU \cap B_k))}
      \lesssim
      \| \tilde g_k \|_{L^2(T_k(\sU \cap B_k))} \\
    \end{aligned}
  \right.
\end{equation}
We then define $\tilde{\vec{F}}_k$ through the explicit integral
representation
\begin{align*}
  \tilde{\vec{F}}_k(x) := \int_{T_k(\sU \cap B_k)} \tilde
  g_k(y)\, \vec{K}(x,y) \dd y \quad
  \text{ with } \quad
  \vec{K}(x,y) := \int_1 ^{+\infty}
  (x-y)\, \psi\left( y+ \tau (x-y)\right) \tau^{n-1} \dd \tau
\end{align*}
where $\psi : \R^n \to \R_+$ is a smooth function with support in
the ball of radius $1/4$ around the origin (which is $T_k(B_k')$)
and $\int \psi = 1$.

The estimates in the third and fourth equations
in~\eqref{eq:bogo-loc-rescaled} follow from Calderón-Zygmund
theory~\cite{MR84633} since all the components of
$\nabla \vec{K}$ behave like $|x-y|^{-n}$ (obtained by changing
variable $\tau = |x-y|^{-1} \tau'$) and satisfy the
Calderón-Zygmund conditions (for details see e.g. \cite[Section
III.3 (p~161)]{galdi-2011-navier-stokes}). Alternatively, the
bounds can be obtained by Fourier analysis
\cite{duran-2012-bogovskii}. To prove the first equation
in~\eqref{eq:bogo-loc-rescaled}, we argue by $L^2$-density and
assume $\tilde g_k \in C^0_c(T_k(\sU \cap B_k))$ and compute
\begin{align*}
  \nabla \cdot \tilde{\vec{F}}_k(x)
  & = \nabla \cdot \left[ \lim_{\ell \to +\infty}
    \int_{T_k(\sU \cap B_k)} \tilde{g}_k(y)
    \int_1 ^\ell (x-y)
    \psi\left( y+ \tau (x-y)\right) \tau^{n-1} \dd \tau  \dd y \right] \\
  & = \lim_{\ell \to +\infty}
    \int_{T_k(\sU \cap B_k)} \tilde{g}_k(y)
    \left[ \psi\left( y+ \tau (x-y)\right) \tau^n \right]_{\tau =
    1}^{\tau = \ell}
    \dd y \\
  & = \lim_{\ell \to +\infty}
    \left[ \ell^n  \int_{T_k(\sU \cap B_k)}
    \tilde g_k(y) \psi\left( y + \ell (x-y)  \right) \dd y
    \right] = \tilde g_k(x) \left( \int_{\R^n} \psi(y) \dd y \right)
    = \tilde g_k(x)
\end{align*}
where we have used $\int \tilde g_k = 0$ and $\int \psi =1$. By
density one deduces $\nabla \cdot \tilde{\vec{F}}_k = \tilde g_k$
in $L^2$. Finally to prove the second equation
in~~\eqref{eq:bogo-loc-rescaled}, we argue by density again and
consider $\tilde g_k \in C^0_c(T_k(\sU \cap B_k))$. Then the
vector field $\tilde{\vec{F}}_k$ is continuous and if
$x \not \in T_k(\sU \cap B_k)$ and $y \in T_k(\sU \cap B_k)$ then the
half-line $y + \tau (x-y)$ for $\tau \ge 1$ never intersects
$T_k(B_k')$, hence $\tilde{\vec{F}}_k$ vanishes on
$\partial T_k(\sU \cap B_k)$. By density, we thus deduce that
$\tilde{\vec{F}}_k \in H^1_0(T_k(\sU \cap B_k))$. This concludes
the proof of~\eqref{eq:bogo-loc} and thus concludes the proof
of~\eqref{eq:divergence} in \cref{theo:ineq}.

\subsection{Consequences and related inequalities}

The \emph{weighted Poincaré-Lions
  inequality}~\eqref{eq:poincare-lions} follows from integrating
$h \in L^2(\sU; \ee^\Phi)$ against $\nabla \cdot \vec{F} = g$ for any
$g \in L^2(\sU;\ee^\Phi)$ with zero-mass and $\vec{F}$
satisfying~\eqref{eq:bogo}. Regarding the \emph{Stokes
  equations}~\eqref{eq:stokes}, the a priori
estimate~\eqref{eq:stokes-estim} follows from integrating the first
equation on $\sU$ against $\vec{u} \ee^\Phi$ and using the Poincaré
inequality, and then integrating the first equation against
$\vec{F} \ee^\Phi$ where $\vec{F}$ solves~\eqref{eq:bogo} with any
$g \in L^2(\sU;\ee^\phi)$ with zero mass. In fact the right hand side
in~\eqref{eq:stokes-estim} could be replaced by
$\| \vec{s} \|_{L^2(\sU;\lfloor \nabla \Phi \rceil^{-2} \ee^\Phi)}$ or
$\| \vec{s} \|_{(H^1_0(\sU;\ee^\Phi))'}$.

The \emph{weighted Korn inequality} is slightly more involved (see
also~\cite{carrapatoso2020weighted}). Consider a vector field
$\vec{u} : \sU \to \R^n$ in $H^1(\sU;\ee^\Phi)$ satisfying the average
conditions~\eqref{eq:korn-average-conditions}.  To
prove~\eqref{eq:korn-statement}, we work on
$\vec{v} = \vec{u} \ee^\Phi \in L^2(\sU;\ee^{-\Phi})$ and write
\begin{align*}
  \left\| \nabla ^\Phi \vec{u}
  \right\|_{L^2(\sU ; \ee^\Phi)} = \left\| \nabla  \vec{v}
  \right\|_{L^2(\sU ;\ee^{-\Phi})} ^2
  & = \left\| \nabla
    ^{\sym} \vec{v}
    \right\|_{L^2(\sU ;\ee^{-\Phi})} ^2 + \left\| \nabla
    ^{\anti}\vec{v} \right\|_{L^2(\sU ;
   \ee^{-\Phi})} ^2 \\
  & \lesssim \left\| \nabla
    ^{\sym} \vec{v}
    \right\|_{L^2(\sU ;\ee^{-\Phi})} ^2 + \left\| \nabla \nabla
    ^{\anti}\vec{v} \right\|_{(H^1_0(\sU ;\ee^{-\Phi}))'} ^2
  \\
  & \lesssim \left\| \nabla
    ^{\sym} \vec{v}
    \right\|_{L^2(\sU ;\ee^{-\Phi})} ^2 + \left\| \nabla \nabla
    ^{\sym}\vec{v} \right\|_{(H^1_0(\sU ;
   \ee^{-\Phi}))'} ^2 \\
  &  \lesssim \left\| \nabla^\sym \vec{v}
    \right\|_{L^2(\sU ;\ee^{-\Phi})} ^2
    = \left\| \nabla^{\Phi,\sym} \vec{u} \right\|_{L^2(\sU;\ee^\Phi)}
\end{align*}
where we have used the previous Poincaré-Lions inequality since
$\int_\sU (\nabla ^{\anti}\vec{v} )\ee^{-\Phi} =0$ and
we have used the fact that all second-order derivatives are
linear combination of those in
$\nabla \nabla ^{\sym}$. This
proves~\eqref{eq:korn-statement} under the average
conditions~\eqref{eq:korn-average-conditions}.

In \cref{thm:boundary-korn} the average
conditions~\eqref{eq:korn-average-conditions} are replaced by boundary
conditions. Here we write
\begin{equation*}
  \vec{v} = \vec{v}_0 + \sum_{i} b_i e^i
  + \sum_{1\le i < j \le n} b_{ij}\,e^{ij} (z-p)
\end{equation*}
such that $\int_{\sU} \vec{v}_0 \dd x = 0$ and
$\int_{\sU} (\nabla^\anti \vec{v}_0) \ee^{-\Phi} = 0$. By the
non-penetration boundary condition, we find for \(\chi^k\) from
\cref{thm:boundary-korn} that
$\int_{\sU} \nabla \cdot (\chi^k \vec{v})=0$ for $k=1,\dots,n$ and
\begin{equation*}
  \int_{\sU} \nabla \cdot
  \left(\chi^k \sum_{i} b_i e^i
    + \chi^k \sum_{1\le i < j \le n} b_{ij}\, e^{ij} (z-p)
  \right)
  = b_k.
\end{equation*}
Regarding \(\vec{v}_0\) we have
\begin{equation*}
  \begin{split}
    \left|
      \int_\sU \nabla \cdot (\chi^k \vec{v}_0)
    \right|
    &\le
    \int_{\sU} |\nabla \chi^k \cdot \vec{v}_0|
    +
    \int_{\sU} |\chi^k  \nabla \cdot \vec{v}_0| \\
    &\le \| \nabla \chi^k \|_{L^2(\sU;\ee^{\Phi})}
    \| \vec{v}_0 \|_{L^2(\sU;\ee^{-\Phi})}
    + \| \chi^k \|_{L^2(\sU;\ee^{\Phi})}
    \| \nabla \cdot \vec{v}_0 \|_{L^2(\sU;\ee^{-\Phi})}\\
    &\lesssim
    \| \nabla^\sym \vec{v}_0 \|_{L^2(\sU;\ee^{-\Phi})},
  \end{split}
\end{equation*}
where we used the Poincaré inequality and the control by the
symmetric gradient for the zero average of the antisymmetric
gradient from the first part. This shows
$|b_k| \lesssim \| \nabla^\sym \vec{v}
\|_{L^2(\sU;\ee^{-\Phi})}$, and we likewise control $b_{ij}$
by testing against $\chi^{ij}$.

\section{The quantitative trajectorial method}
\label{sec:abstract}

In this section we prove \cref{theo:main}.

\subsection{Transport mapping condition}
\label{sec:transport-mapping-condition}

In \hypref{h:tcc}, we only assume a lower bound on the transport
condition. As a first step, we show that \hypref{h:tcc} implies the
following \emph{transport mapping condition} which is then used in the
following trajectorial method.

Consider the evolution~\eqref{eq:tcc-decay}, and introduce the
semigroup \(\cG_t = \ee^{t(\cT+\cB)}\) with boundary condition
\(\cR^T\). Remembering that \(\cB=0\) or $\cB=\sigma \cL^T$ in
\hypref{h:tcc}, observe that \(\cG_t\) is a semigroup in \(L^\infty\)
and has stationary state \(1\) and \(\cG_t^*\) has stationary state
\(f_\infty^2\).

\sethypothesistag{\ref*{h:tcc}M}
\begin{hypothesis}[Transport mapping condition]
  \phantomsection
  \label{h:tmc}

  Let $\cB$ be an operator such that the evolution
  \begin{align}
    \label{eq:tmc-semmigroup}
    \begin{dcases}
      \partial_t \varphi - \cT \varphi -\cB \varphi = 0
      &\text{in } \Omega \times \sV, \\
      \gamma_+ \varphi = \cR^T \tilde\gamma_- \varphi
      &\text{on } \Gamma_+
    \end{dcases}
  \end{align}
  defines a semigroup $\cG_t$ in $L^\infty$ with the constant $1$ as
  stationary state and \(f_\infty^2\) as stationary state of
  \(\cG_t^*\).

  Assume a connected domain $\Sigma \subset \Omega$ so that
  $(\Sigma,\phi)$ is $\epsilon$-regular for some $\epsilon >0$,
  $\inf_{x \in \Sigma} \sigma(x) > 0$ and a non-negative
  $\chi \in L^\infty(\Omega \times \mathcal \sV)$ with
  $\supp \chi \subset \Sigma \times \sV$. Assume that there exists a
  time $T>0$ and functions
  $\psi,\tilde \psi : [0,T] \times \Omega \times \sV \to \R$ such that
  \begin{equation}
    \label{eq:tmc-mapping}
    \left|
      1 - \int_0^T \cG_t (\psi_t + \tilde \psi_t) \dd t
    \right|
    \le \frac 18.
  \end{equation}

  Moreover, it satisfies the following compatibility conditions
  \begin{itemize}
  \item The sources \(\psi_t\) is controlled as
    \begin{equation}
      \label{eq:tmc:psi-bound}
      \forall \, t \in [0,T], \quad |\psi_t| \lesssim \chi\, w
    \end{equation}
    and \(\tilde \psi_t\) satisfies for all \(t \in [0,T]\) the estimates
    \begin{equation}\label{eq:tmc:psi-tilde-bounds}
      \left\{
        \begin{lgathered}
          \int_{\Omega \times \sV} f^2 \tilde \psi_t \dd \mu
          \lesssim  \| f \|_{L^2(\Omega\times\sV,\mathrm{d}\mu)}
          \left| \int_{\Omega \times \sV} \sigma f \cL f \dd \mu \right|^{\frac12}
          + \left| \int_{\Omega \times \sV} \sigma f \cL f \dd \mu \right|,\\
          \left| \int_{\Omega \times \sV} f f_\infty \tilde \psi_t \dd \mu \right|
          \lesssim \left| \int_{\Omega \times \sV} \sigma f \cL f \dd \mu \right|^{\frac12}.
        \end{lgathered}
      \right.
    \end{equation}
  \item For all \(t \in [0,T]\) the function
    \begin{equation}
      \label{eq:tmc:def-g}
      G_t = \int_{\tau=0}^{T-t} \cG_\tau (\psi_{t+\tau}+\tilde
      \psi_{t+\tau}) \dd \tau
    \end{equation}
    is uniformly bounded in \(L^\infty(\Omega \times \sV)\) and
    satisfies
    \begin{equation}
      \label{eq:g-bound-2}
      \int_{\Omega \times \sV}
      [\cB^*(f^2) - 2 f \sigma \cL f]\, G_t
      \dd \mu \lesssim \| f \|_{L^2(\Omega\times\sV,\mathrm{d}\mu)} \,
      \left| \int_{\Omega \times \sV} \sigma f \cL f \dd \mu \right|^{\frac12}
      + \left| \int_{\Omega \times \sV} \sigma f \cL f \dd \mu \right|
    \end{equation}
    and
    \begin{equation}
      \label{eq:g-bound-1}
      \left|
        \int_{\Omega \times \sV}
        [\cB^*(f f_\infty) - f_{\infty} \sigma \cL f]\, G_t
        \dd \mu  \right|
      \lesssim \left| \int_{\Omega \times \sV} \sigma f \cL f \dd \mu \right|^{\frac12}.
    \end{equation}
  \end{itemize}
\end{hypothesis}

The relation to \hypref{h:tcc} is captured by the following
lemma.
\begin{lemma}
  \label{thm:tmc}
  The hypothesis \hypref{h:tcc} implies \hypref{h:tmc}.
\end{lemma}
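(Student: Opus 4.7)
The plan is to identify $\psi_t$ and $\tilde{\psi}_t$ directly from the solution $\varphi$ of \eqref{eq:tcc-decay} via Duhamel's formula, and then check that each of the bounds required by \hypref{h:tmc} reduces to one of the estimates already assumed in \hypref{h:tcc}.

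Let $\varphi$ denote the solution of \eqref{eq:tcc-decay}. Writing the equation as $\partial_t \varphi = (\cT + \cB)\varphi - (\chi w + \mathfrak{C})\varphi$ with $\varphi_0 = 1$, and using that $\cG_t 1 = 1$ since $1$ is stationary, Duhamel's formula gives
\begin{equation*}
  \varphi_t = 1 - \int_0^t \cG_{t-s}\bigl[(\chi w + \mathfrak{C})\varphi_s\bigr] \dd s.
\end{equation*}
Since $\varphi$ is bounded and converges to $0$ in $L^\infty$, we may fix a time $T > 0$ such that $\|\varphi_T\|_{L^\infty} \le 1/8$. The natural candidates for the sources in \hypref{h:tmc} are then
\begin{equation*}
  \psi_t := \chi\, w\, \varphi_{T-t}, \qquad
  \tilde{\psi}_t := \mathfrak{C}\, \varphi_{T-t}, \qquad t \in [0,T].
\end{equation*}

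First, the change of variables $s = T - t - \tau$ in the definition \eqref{eq:tmc:def-g} and another application of the Duhamel identity above produce the crucial clean formula
\begin{equation*}
  G_t = \int_0^{T-t} \cG_\tau\bigl[(\chi w + \mathfrak{C})\varphi_{T-t-\tau}\bigr] \dd \tau
  = 1 - \varphi_{T-t}.
\end{equation*}
In particular $G_0 = 1 - \varphi_T$, which gives \eqref{eq:tmc-mapping} by the choice of $T$, and $G_t$ is uniformly bounded in $L^\infty$ since $\varphi$ is.

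The remaining estimates follow by unwinding definitions. The bound $|\psi_t| \lesssim \chi w$ in \eqref{eq:tmc:psi-bound} is immediate from $\|\varphi\|_{L^\infty} \lesssim 1$. For \eqref{eq:tmc:psi-tilde-bounds} we dualise, writing $\int f^2\, \tilde{\psi}_t \dd\mu = \int \mathfrak{C}^*(f^2)\, \varphi_{T-t}\dd\mu$ and similarly for the second integral, so that both inequalities are exactly the two lines of \eqref{eq:creation-bound} applied at time $T-t$. Finally, using $G_t = 1 - \varphi_{T-t}$, the inequalities \eqref{eq:g-bound-2} and \eqref{eq:g-bound-1} reduce line-by-line to the two estimates in \eqref{eq:diff-bound-2} evaluated at $T-t$. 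This completes the verification.

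The proof is essentially formal: the only nontrivial ingredient is the identification $G_t = 1-\varphi_{T-t}$, which rewrites the abstract ``transport mapping'' appearing in \hypref{h:tmc} as the controlled decay statement already postulated in \hypref{h:tcc}. No real obstacle arises; the only delicate point is keeping track of time reversal in the Duhamel formula to match the direction in which $\cG_\tau$ acts on $\psi_{t+\tau}$ inside \eqref{eq:tmc:def-g}.
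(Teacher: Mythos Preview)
Your proof is correct and follows essentially the same approach as the paper: you define $\psi_t = \chi w\,\varphi_{T-t}$ and $\tilde\psi_t = \mathfrak{C}\,\varphi_{T-t}$, use Duhamel to obtain the key identity $G_t = 1 - \varphi_{T-t}$, and then read off each required bound from the corresponding assumption in \hypref{h:tcc}. The only cosmetic difference is that you spell out the change of variables in the Duhamel computation a bit more explicitly than the paper does.
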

\begin{proof}
  By the assumed decay of \eqref{eq:tcc-decay} in \hypref{h:tcc},
  there exists a time $T>0$ such that
  $\|\varphi_T\|_{\infty} \le 1/8$. As the constant $1$ is a
  stationary state, we find by Duhamel that
  \begin{equation*}
    1
    - \int_0^T \cG_t (\mathfrak C \varphi_{T-t} + \chi w \varphi_{T-t}) \dd t
    = \varphi_T.
  \end{equation*}
  We therefore find functions \(\psi\) and \(\tilde \psi\) satisfying
  \eqref{eq:tmc-mapping} by setting
  \begin{equation*}
    \psi_t = \chi w \varphi_{T-t}
    \text{ and }
    \tilde \psi_t = \mathfrak C \varphi_{T-t}.
  \end{equation*}

  By \hypref{h:tcc} \(\varphi_t\) is bounded in \(L^\infty\) so that
  the choice of \(\psi_t\) implies \(|\psi_t| \lesssim \chi w\). The
  bounds on \(\tilde \psi\) in \eqref{eq:tmc:psi-tilde-bounds} follows
  directly from the assumption \eqref{eq:creation-bound} in
  \hypref{h:tcc}.

  Then note again by Duhamel that
  \begin{equation*}
    G(t,x,v)
    = \int_{\tau=0}^{T-t} \cG_\tau (\psi_{t+\tau}+\tilde
    \psi_{t+\tau}) \dd \tau
    = 1 - \varphi_{T-t}.
  \end{equation*}
  Hence by the boundedness of \(\varphi_t\), we find that \(G_t\) is
  uniformly bounded in \(L^\infty\). The last assumptions
  \eqref{eq:g-bound-2} and \eqref{eq:g-bound-1} then follow directly
  from \eqref{eq:diff-bound-2}.
\end{proof}

\subsection{Decay criterion}
\label{sec:preliminary}

We now prove the decay claimed in \cref{theo:main}, where we replace
\hypref{h:tcc} by \hypref{h:tmc} using \cref{thm:tmc} from the
previous subsection.  Note that \(\tilde \psi\) is only appearing in
the case when \(\mathfrak C\not = 0\) for capturing hypoelliptic
effects. Hence at first reading, \(\tilde \psi\) could be ignored.

Assume
\hypref{h:geom}-\hypref{h:eq}-\hypref{h:local}-\hypref{h:bdd}-\hypref{h:tmc}-\hypref{h:coe}
and consider $f \in L^\infty (\R_+;L^2(\Omega \times \sV, \dd \mu))$ a
real-valued solution to~\eqref{eq:gen} with zero global average
$\int_{\Omega \times \sV} f_{\init} =0$ (the complex case follows by
linearity). Its associated equilibrium is zero. The dissipation
is given by
\begin{align}
  \label{eq:dissipation}
  \cD(f) := -\frac{\mathrm{d}}{\mathrm{d}t} \| f_t \|_{L^2(\Omega
  \times \sV, \dd \mu)}^2 = - 2 \int_{\Omega \times \sV} \sigma f \cL f \dd \mu
  - \int_{\Gamma_+} \left[
  \left( \cR \gamma_+ f\right)^2 - \left( \gamma_+ f
  \right)^2 \right] \dd \nu
  \ge 0.
\end{align}

The following decay criterion is standard in semigroup theory and is
used at least since~\cite{MR1908664} in kinetic theory. Assume there
is $\eta \in (0,1)$ so that
\begin{align}
  \label{eq:crit}
  \int_0 ^T \cD(f_t) \dd t \ge \eta \left\| f_{\init}
  \right\|_{L^2(\Omega \times \sV, \dd \mu)} ^2
\end{align}
for a solution $f_t \in L^2(\mu)$, then
\begin{align*}
  \left\| f_t \right\|_{L^2(\Omega \times \sV, \dd \mu)} ^2 \le C
  \ee^{-\Lambda t} \left\| f_{\init} \right\|_{L^2(\Omega \times
  \sV, \dd \mu)} ^2
\end{align*}
with $C := (1-\eta)^{-1}>1$ and
$\Lambda := - \frac{\ln(1-\eta)}{T} >0$. We now
prove~\eqref{eq:crit} for some $\eta \in (0,1)$.

\subsection{Following trajectories}
\label{sec:following-trajectories}

The underlying idea for the first step is to use the transport to
relate the initial condition
\(\| f_\init \|^2_{L^2(\Omega\times\sV,\mathrm{d}\mu)}\) to the value
of \(f^2\) over \([0,T]\times\Omega\times\sV\) where it can be
controlled by the dissipation; in particular, to \(f^2\) in
\([0,T]\times\Omega\times\Sigma\). To achieve this idea, note that
\(f^2\) satisfies the transport equation up to error terms and that
the semigroup \(\cG_t\) is solving up to the effect of \(\cB\) the dual
transport equation.

To implement this idea, we use \eqref{eq:tmc-mapping} to relate
\(\| f_\init \|^2_{L^2(\Omega\times\sV,\mathrm{d}\mu)}\) to
\(\int f^2 (\psi+\tilde \psi) \dd \mu\).  By the evolution equations
\eqref{eq:gen} and \eqref{eq:tmc-semmigroup} we find for
$s \in [0,t]$ and $t \in [0,T]$ that
\begin{equation}
  \begin{split}
    &\frac{\mathrm{d}}{\mathrm{d}s} \int_{\Omega \times \sV}
    f_s^2\; \cG_{t-s} (\psi_t{+}\tilde \psi_t)  \dd \mu\\
    &= - \int_{\Omega \times \sV}
    \cT \Big[ f_s^2  \cG_{t-s} (\psi_t{+}\tilde \psi_t)
    \Big] \dd \mu
    + \int_{\Omega \times \sV}
    \Big(2f_s \sigma \cL f_s
    - \cB^*(f_s^2)
    \Big)  \cG_{t-s} (\psi_t{+}\tilde \psi_t) \dd \mu.
  \end{split}
\end{equation}
Using $\cT f_\infty = \cT f_\infty ^{-1} =0$, the transport term can
be integrated and then yields boundary terms. Further integrating $s$
over \([0,t]\) and then $t$ over \([0,T]\) yields
\begin{align*}
  & \int_0 ^T \int_{\Omega \times \sV}
    f_t^2\; (\psi_t{+}\tilde \psi_t) \dd \mu \dd t
    =
    \int_0 ^T \int_{\Omega \times \sV}
    f_{\init}^2\;
    \cG_t (\psi_t{+}\tilde \psi_t) \dd \mu \dd t  \\
  & + \int_0 ^T \int_0 ^t \int_{\Omega \times \sV}
    \Big(2 f_s \sigma \cL f_s - \cB^*(f_s^2)\Big)\,
    \cG_{t-s} (\psi_t {+} \tilde \psi_t) \dd \mu \dd s \dd t \\
  & + \int_0 ^T \int_0 ^t \int_{\Gamma_+}
    \left[ \left(\cR \gamma_+ f_s \right)^2
    \tilde\gamma_-\left( \cG_{t-s}(\psi_t{+}\tilde\psi_t) \right)
    - (\gamma_+ f_s)^2 \cR^T\left(\tilde \gamma_- \cG_{t-s}(\psi_t{+}\tilde
    \psi_t)\right)
    \right]
    \dd \nu \dd s \dd t \\
  & \quad =: I_1 + I_2 + I_3.
\end{align*}

The first term $I_1$ gives a control on \(\|f_{\init}\|\) by
\eqref{eq:tmc-mapping} in \hypref{h:tmc} as
\begin{align*}
  I_1   = \int_0 ^T \int_{\Omega \times \sV}
  f_{\init}^2
  \cG_t (\psi_t{+}\tilde \psi_t)  \dd \mu \dd t
  \ge \frac{7}{8} \int_{\Omega \times \sV}
  f_{\init}(x,v)^2
  \dd \mu \dd t
  = \frac{7}{8} \| f_{\init} \|^2_{L^2(\Omega \times \sV, \dd \mu)}.
\end{align*}
The second term $I_2$ can be written with \(G_t\) of
\eqref{eq:tmc:def-g} and bounded by \eqref{eq:g-bound-2} as
\begin{align*}
  -I_2
  = \int_0 ^T \int_{\Omega \times \sV}
  \Big(\cB^*(f_s^2) - 2 f_s \sigma \cL f_s\Big)
  G_s \dd \mu \dd s
  \lesssim \int_0^T
  \| f_s \|_{L^2(\Omega\times\sV,\dd \mu)}
  \sqrt{\cD(f_s)}\, \dd s
  + \int_0^T \cD(f_s)\, \dd s
\end{align*}
The third term $I_3$ is controlled by \eqref{eq:boundary-bound} in
\hypref{h:bdd} as
\begin{align*}
  -I_3
  = \int_0^T \int_{\Gamma_+}
  \left[
  (\gamma_+ f_s)^2
  \cR^T(\tilde \gamma_- G_s)
  - (\cR \gamma_+ f_s)^2 \tilde \gamma_- G_s
  \right]
  \dd \nu \dd s
  \le C_r \| G_s \|_{L^\infty([0,T]\times\Omega\times \sV)}
  \int_0 ^T \cD(f_s) \dd s.
\end{align*}

For the LHS we keep at this stage the contribution of
\(\int_0^T \int_{\Omega\times\sV} f_t^2 \psi_t \dd \mu \dd t\) and
note for the other term by \eqref{eq:tmc:psi-tilde-bounds} that
\begin{equation}
  \label{eq:traj:psi-tilde-bound}
  \int_0 ^T \int_{\Omega \times \sV}
  f_t^2\; \tilde \psi_t \dd \mu \dd t
  \lesssim \int_0^T \| f_t \|_{L^2(\Omega\times\sV,\dd\mu)}
  \sqrt{\cD(f_t)} \dd t
  + \int_0^T \cD(f_t) \dd t.
\end{equation}

As the evolution of \(f\) is dissipative, we have the trivial bound
\(\| f_t \|_{L^2(\Omega\times\sV, \dd \mu)} \le \| f_{\init}
\|_{L^2(\Omega\times\sV, \dd \mu)}\) for \(t \in [0,T]\) which we can
use to further estimate the bound on \(-I_2\) and
\eqref{eq:traj:psi-tilde-bound}. Combining the different parts, we
therefore arrive at
\begin{align}
  \label{eq:following}
  \frac 34
  \| f_{\init} \|^2_{L^2(\Omega \times \sV, \dd \mu)}
  -
  \int_0 ^T \int_{\Omega \times \sV}
  f_t^2\, \psi_t \dd \mu \dd t
  \lesssim
  \int_0 ^T \cD(f_t) \dd t.
\end{align}

\subsection{Removing the local and global averages}

Using the spatial averages
\begin{equation*}
  \vAvg{f_t}(x) =
  \vAvg{f}(t,x) := \int_{\sV} f(t,x,v) \dd v, \quad
  \langle \psi_t \rangle_M (x) :=
  \int_{\sV} \psi(t,x,v) M(v) \dd v,
\end{equation*}
split the term remaining to control by \eqref{eq:loc-coerc} of
\hypref{h:local} as
\begin{equation}
  \label{eq:split-local}
  \begin{aligned}
    \int_0 ^T \int_{\Omega \times \sV} f_t(x,v)^2
    \psi_t(x,v) \dd \mu \dd t
    &\le 2
    \int_0 ^T \int_{\Omega \times \sV} \langle f_t
    \rangle^2 M \psi_t \dd \mu \dd t
    + 2
    \int_0 ^T \int_{\Omega \times \sV} \big[ f_t -
    \langle f_t \rangle M \big]^2 \psi_t \dd \mu \dd t \\
    & \le 2
    \int_0 ^T \int_{\Omega} \langle f_t \rangle^2
    \langle \psi_t \rangle_M \ee^\phi \dd x \dd t
      + 2 \lambda_1 \int_0^T
      \left\|
      \frac{\psi_t}{\sigma w}
      \right\|_{\infty}
      \cD(f_t)
      \dd t.
  \end{aligned}
\end{equation}
and by assumption \eqref{eq:tmc:psi-bound} we know that
\(\psi_t/(\sigma w)\) is uniformly bounded. Using the global
projection
\begin{equation*}
  \gAvg{f}_\psi :=
  \frac{1}{m_\psi}
  \int_0 ^T \int_{\Omega} \vAvg{f_t}(x) \langle \psi_t \rangle_M (x)
  \dd x \dd t,\qquad
  m_\psi :=
  \int_0 ^T \int_{\Omega} \vAvg{\psi_t}_M(x)\, \ee^{-\phi(x)} \dd x \dd t,
\end{equation*}
we further split the integral over \(\vAvg{f_t}^2\) as
\begin{equation}
  \label{eq:split-global}
  \int_0 ^T \int_{\Omega} \langle f_t \rangle^2
  \langle \psi_t \rangle_M \ee^\phi \dd x \dd t
  - m_\psi \gAvg{f}_\psi^2
  \le \int_0 ^T \int_{\Omega} \left[ \vAvg{f_t} -
    \gAvg{f}_\psi\, \ee^{-\phi} \right]^2 \langle \psi_t
  \rangle_M \ee^\phi \dd x \dd t.
\end{equation}

\subsection{Control of the local average}
\label{subsec:am}

Let us prove
\begin{align}
  \label{eq:am}
  \int_{\Sigma_T}
  \left[\vAvg{f_t} - \gAvg{f}_\psi\ee^{-\phi} \right]^2
  \langle \psi_t \rangle_M \ee^\phi \dd z
  \le \var' \int_{\Sigma_T\times \sV}
  f_t(x,v)^2 \psi_t(x,v) \dd \mu \dd t
  + C(\var') \int_{0} ^T \cD(f_t) \dd t
\end{align}
for any $\var'>0$ and some corresponding constant $C(\var')>0$, and
with the notation $z:=(t,x)$ and $\Sigma_T := (0,T) \times
\Sigma$.

For proving \eqref{eq:am}, consider
\begin{equation}
  \label{eq:def-g}
  g := \langle f \rangle \langle \psi \rangle_M - \gAvg{f}_\psi \langle
  \psi \rangle_M \ee^{-\phi}
\end{equation}
and note that \(g\) has zero mass and \(g \in L^2(\Sigma_T;\ee^\phi)\)
with the bound
\begin{equation}
  \label{eq:control-g}
  \int_{\Sigma_T} |g|^2 \ee^\phi \dd z
  \lesssim \int_{\Sigma_T \times \sV}
  f^2 \psi \dd \mu \dd t
  + \int_{0} ^T \cD(f_t) \dd t
\end{equation}
which follows from
\begin{align*}
  \langle f_t \rangle (x) \langle \psi_t \rangle_M(x)
  & = \int_{v,v_*\in \sV} f_t(x,v) \psi_t(x,v_*) M(v_*) \dd v \dd
    v_* \\
  & = \int_{v_* \in \sV} f_t(x,v_*) \psi_t(x,v_*) \dd v_* +
    \int_{v_* \in \sV} \big[ f_t(x,v_*) - \langle f_t \rangle
    M(v_*) \big] \psi_t(x,v_*) \dd v_*.
\end{align*}
For this \(g\) we can therefore apply the divergence inequality of
\cref{theo:ineq} which provides a solution
$\vec{F} = \vec{F}(z) : \Sigma_T \to \R^{1+d}$
to~\eqref{eq:divergence}. Using the vector field \(\vec{F}\), we then find
\begin{align*}
  \int_{\Sigma_T} \left[ \langle
  f \rangle -
  \gAvg{f}_\psi
  \ee^{-\phi} \right]^2 \langle \psi \rangle_M\, \ee^\phi \dd z
  = \int_{\Sigma_T} \left( \nabla _z \cdot \vec{F}
  \right)  \left[ \langle
  f \rangle \ee^\phi - \gAvg{f}_\psi \right] \dd z
  = - \int_{\Sigma_T} \vec{F}
  \cdot \nabla_z \left( \langle
  f \rangle \ee^\phi  \right) \dd z
\end{align*}
where the integration by part has no boundary term since
$\vec{F} \in H^1_0(\Sigma_T;\ee^\phi)$. Let us denote
$\partial_0 = \partial_t$ and $\partial_i = \partial_{x_i}$ for
$i =1,\dots,d$ and prove
$\partial_i ( \langle f \rangle \ee^\phi ) = K_i + \sum_{j=0} ^d
\partial_j J_{ij}$, $i,j=0,\dots,d$, with
\begin{align}
  \label{eq:claim-am}
  \int_{\Sigma_T} \left( \lfloor \nabla \phi \rceil^{-2} K_i(t,x)^2 + |J_i(t,x)|^2
  \right) \ee^{-\phi} \dd z
  \lesssim
  \int_0^T \cD(f_t) \dd t.
\end{align}
Let us first accept~\eqref{eq:claim-am} and conclude the proof
of~\eqref{eq:am} (using~\eqref{eq:divergence}, \eqref{eq:control-g} and
$|\nabla^2 \phi| \lesssim 1+|\nabla \phi|$):
\begin{align*}
  & \int_{\Sigma_T}  \left[ \langle
    f \rangle -
    \gAvg{f}_\psi
    \ee^{-\phi} \right]^2 \langle \psi \rangle_M \ee^\phi \dd z
    = - \int_{\Sigma_T}  \vec{F}
    \cdot \nabla_z \left( \langle
    f \rangle \ee^\phi  \right) \dd z \\
  & \hspace{1cm} = - \sum_{i=0} ^d \int_{\Sigma_T}
    \vec{F}_i(z) K_i(z) \dd z
    + \sum_{i,j=0} ^d \int_{\Sigma_T}
    \partial_j \vec{F}_i (z) J_{ij}(z) \dd z  \\
  & \hspace{1cm} \lesssim \left(
    \int_{\Sigma_T} \left| \vec{F} \right|^2 \lfloor
    \nabla_x \phi \rceil^2 \ee^\phi \dd z + \int_{\Sigma}
    \left| \nabla_z \vec{F} \right|^2 \ee^\phi \dd z
    \right)^{\frac12}
    \left( \int_{\Sigma_T} \left( \lfloor \nabla_x
    \phi \rceil^{-2} K_i(z)^2 +
    |J_i(z)|^2 \right)\ee^{-\phi} \dd z \right)^{\frac12}  \\
  & \hspace{1cm} \lesssim \left( \int_{\Sigma_T}
    f^2 \psi \dd \mu \dd t \right)^{\frac12}
    \left[ \int_0^T \cD(f_t) \dd t \right]^{\frac 12}
    + \int_0^T \cD(f_t) \dd t.
\end{align*}
which proves~\eqref{eq:am} by splitting the product into squares
adequately.

Let us now prove~\eqref{eq:claim-am}. Define
$\varphi_i \in C^\infty_c(\sV)$, $i=0,\dots,d$ so that, denoting
$v_0=1$, $\int_{\sV} \varphi_i(v) v_j M(v) \dd v = \delta_{ij}$. Then
$\cT f_\infty = 0$ from \hypref{h:eq} implies
\begin{equation*}
  \int_{\sV} \Big\{ \left( \partial_t + v \cdot \nabla_x -
    \nabla_x \phi \cdot \nabla_v \right) \big[ \langle f
  \rangle M \big] \Big\} \varphi_i\ee^{\phi} \dd v =
  \partial_i \left( \langle f
    \rangle \ee^\phi \right)
\end{equation*}
so that the evolution~\eqref{eq:gen} implies
\begin{equation*}
  \partial_i \left( \langle f \rangle \ee^\phi \right) =
  \int_{\sV} \Big\{ \left( \partial_t + v \cdot \nabla_x -
    \nabla_x \phi \cdot \nabla_v \right) \big[ \langle f
  \rangle M - f \big] \Big\} \varphi_i\ee^{\phi} \dd v +
  \int_{\sV} (\sigma \cL f)\varphi_i\ee^{\phi} \dd v.
\end{equation*}
The terms on the RHS can be collected as
\begin{align*}
  & \int_{\sV} \Big\{ \left( \partial_t
    + v \cdot \nabla_x - \nabla_x \phi \cdot \nabla_v \right)
    \big[ \langle f \rangle M - f \big] \Big\} \varphi_i
   \ee^{\phi} \dd v  +
    \int_{\sV} (\sigma \cL f)\varphi_i\ee^{\phi} \dd v
    = K_i +
    \sum_{j=0} ^d \partial_j J_{ij}
\end{align*}
where
\begin{align*}
  \begin{dcases}
    K_i(t,x) := \int_{\sV}
    \left[ \langle f \rangle M - f \right] \nabla_x \phi \cdot
    \nabla_v \varphi_i
   \ee^{\phi} \dd v +
    \int_{\sV} (\sigma \cL f)\varphi_i\ee^{\phi} \dd v, \\
    J_{i0}(t,x) := \int_{\sV}
    \left[ \langle f \rangle M - f \right]  \varphi_i
   \ee^{\phi} \dd v,  \\
    J_{ij}(t,x) := \int_{\sV}
    \left[ \langle f \rangle M - f \right] v_j  \varphi_i
   \ee^{\phi} \dd v.
  \end{dcases}
\end{align*}

On the one hand, \hypref{h:local} implies as
\(\varphi_i \in \domain{\cL^*}\) that
\begin{align*}
  \int_{\Sigma_T}  \left| \int_{\sV} (\sigma \cL
  f_t)(x,v) \varphi_i(v) \ee^\phi \dd v \right|^2\ee^{-\phi} \dd z
  \lesssim
  \int_0^T \cD(f_t) \dd t
\end{align*}
and on the other hand $\vAvg{f} M - f$ is controlled by
assumption~\eqref{eq:loc-coerc} as $\sigma \gtrsim 1$ on $\Sigma$ so
that
\begin{align*}
  \forall \, i,j=0,\dots,d, \quad
  & \int_{\Sigma_T} \lfloor \nabla_x
    \phi \rceil^{-2} K_i^2\ee^{-\phi} \dd z +
    \int_{\Sigma_T} J_{ij}^2\ee^{-\phi} \dd z \\
  & \lesssim
    \int_0 ^T \int_{\Omega} \sigma \left| \int_{\sV}
    \left| \langle f \rangle M - f \right| \dd v \right|^2
    \ee^\phi \dd z
    \lesssim \int_0^T \cD(f_t) \dd t.
\end{align*}

The combination of~\eqref{eq:split-local},~\eqref{eq:split-global} and~\eqref{eq:am} yields by
choosing $\var'$ small enough:
\begin{align}
  \label{eq:split2}
  \int_0 ^T \int_{\Omega \times \sV} f_t(x,v)^2
  \psi_t(x,v) \dd \mu \dd t
  - 4 \gAvg{f}_\psi ^2 m_\psi
  \lesssim
  \int_0^T \cD(f_t) \dd t.
\end{align}

Together with~\eqref{eq:following} it implies
\begin{align}
  \label{eq:reduc}
  \frac 34
  \| f_{\init}\|^2_{L^2_{x,v}(\mu)}
  - 4 m_\psi \gAvg{f}^2
  \lesssim
  \int_0 ^T \cD(f_t) \dd t.
\end{align}

\subsection{Control of the global average}
\label{sec:control-global-average}

We decompose the global average as
\begin{align*}
  \gAvg{f}_\psi
  & = \frac{1}{m_\psi}
    \int_0 ^T \int_{\Omega \times \sV}
    \vAvg{f_t}(x)\, \psi_t(x,v) M(v) \dd x \dd v \dd t \\
  & = \frac{1}{m_\psi}
    \int_0 ^T \int_{\Omega \times \sV}
    \big[ \vAvg{f_t} (x) M(v) - f_t(x,v) \big]
    \psi_t(x,v) \dd x \dd v \dd t \\
  & \qquad + \frac{1}{m_\psi}
    \int_0 ^T \int_{\Omega \times \sV}
    f_t(x,v)\, \psi_t(x,v) \dd x \dd v \dd t
    =: A_1 + A_2.
\end{align*}
The first term $A_1$ is controlled using~\eqref{eq:loc-coerc} by
\begin{align*}
  |A_1| \lesssim \left( \int_0 ^T \int_{\Omega \times \sV} \sigma
  \left| \langle f \rangle M - f \right|^2
  w \dd \mu \dd t \right)^{\frac12}
  \lesssim
  \left[ \int_{0} ^T \cD(f_t) \dd t \right]^{\frac 12}.
\end{align*}
The second term \(A_2\) is controlled similarly to the idea in
\cref{sec:following-trajectories}, except that we can work directly
with \(f\). Here we find for \(s \in [0,t]\) and \(t \in [0,T]\) that
\begin{equation}
  \begin{split}
    \frac{\mathrm{d}}{\mathrm{d}s} \int_{\Omega \times \sV}
    f_s\; f_\infty \cG_{t-s} (\psi_t{+}\tilde \psi_t)  \dd \mu
    &= - \int_{\Omega \times \sV}
    \cT \Big[ f_s f_\infty \cG_{t-s} (\psi_t{+}\tilde \psi_t)
    \Big] \dd \mu \\
    &\quad+ \int_{\Omega \times \sV}
    \Big(\sigma \cL f_s
    - \frac{1}{f_\infty} \cB^*(f_s f_\infty)
    \Big) f_\infty \cG_{t-s} (\psi_t{+}\tilde \psi_t) \dd \mu.
  \end{split}
\end{equation}
Following the same calculation as in \cref{sec:following-trajectories}
we arrive at
\begin{equation}
  \label{eq:duhamel-1}
  \begin{split}
    \int_0^T \int_{\Omega\times \sV} f_t (\psi_t{+}\tilde \psi_t) \dd x \dd v
    &= \int_{\Omega\times \sV} f_{\init}
    \left(\int_0^T \cG_t(\psi_t{+}\tilde \psi_t)\dd t\right)
    \dd x \dd v\\
    &+ \int_0^T \int_{\Omega\times \sV}
    \Big(\sigma \cL f_s
    - \frac{1}{f_\infty} \cB^*(f_s f_\infty)
    \Big) f_\infty G_s \dd \mu \dd s\\
    &+ \int_0^T \int_{\Gamma_+}
    \left[
      \cR(\gamma_+f_s) f_{\infty} \tilde \gamma_- G_s
      - \gamma_+ f_s\, f_\infty \cR^T(\tilde \gamma_- G_s)
    \right] \dd \nu \dd s.
  \end{split}
\end{equation}
The last term vanishes by the definition of \(R^T\) as the
appropriately weighted adjoint.  As \(f_\init\) has mass zero, we find
by \eqref{eq:tmc-mapping} that
\begin{equation*}
  \left|
    \int_{\Omega\times \sV} f_{\init}
    \left(\int_0^T \cG_t(\psi_t{+}\tilde \psi_t)\dd t\right)
    \dd x \dd v
  \right|
  \le \frac 18 \| f_{\init} \|_{L^2(\mu)}.
\end{equation*}
Hence \eqref{eq:duhamel-1} shows by \eqref{eq:tmc:psi-tilde-bounds},
\eqref{eq:g-bound-1} for \(A_2\) that
\begin{equation*}
  \left|
    \int_0^T \int_{\Omega\times \sV} f_t \psi_t \dd x \dd v
  \right|
  - \frac 18 \| f_{\init} \|_{L^2(\mu)}
  \lesssim \int_0^T \sqrt{\cD(f_t)} \dd t.
\end{equation*}
We therefore arrive at the bound
\begin{equation*}
  4 m_\psi \gAvg{f}_\psi^2
  - \frac{1}{8m_\psi} \| f_\init \|_{L^2(\mu)}^2
  \lesssim \int_0^T \cD(f_t) \dd t.
\end{equation*}

For the final conclusion, we note that \(m_\psi\) is close to \(1\)
due to \eqref{eq:tmc-mapping}. As
\(\int_{\Omega\times \sV} \tilde \psi f_\infty \dd x \dd v = 0\) due
to \eqref{eq:tmc:psi-tilde-bounds} and \(f_\infty^2\) is a stationary
state of \(\cG^*_t\) we find that
\begin{equation*}
  m_\psi
  =
  \int_0 ^T \int_{\Omega \times \sV}
  \psi_t\, f_\infty \dd x \dd v \dd t
  =
  \int_0 ^T \int_{\Omega \times \sV}
  (\psi_t+\tilde \psi_t) f_\infty \dd x \dd v \dd t
  =
  \int_0 ^T \int_{\Omega \times \sV}
  \cG_t(\psi_t + \tilde \psi_t) f_\infty \dd x \dd v \dd t
\end{equation*}
so that \(7/8 \le m_\psi \le 9/8\). We therefore arrive at the bound
\begin{equation}
  \label{eq:average-psi}
  4m_\psi \gAvg{f}_\psi ^2
  - \frac{1}{7} \| f_{\init} \|^2_{L^2(\mu)}
  \lesssim
  \int_0^T \cD(f_t) \dd t.
\end{equation}

\subsection{Conclusion}

We combine~\eqref{eq:reduc} and~\eqref{eq:average-psi} to get
\begin{align*}
  \frac 12
  \| f_{\init} \|^2_{L^2(\mu)} \lesssim \int_0 ^T \cD(f_t) \dd t
\end{align*}
which implies the exponential convergence as discussed in
\cref{sec:preliminary}. This concludes the proof of \cref{theo:main}.

\section{Application to concrete equations}
\label{sec:concrete}

In this section we prove the results for applying \cref{theo:main} to
the concrete examples.

\subsection{Proof of local spectral gap \hypref{h:local}}
\label{ss:local-coerc-concrete}

In the described geometric settings, we can directly verify
$\cL M = 0$ and $\cR f_\infty= 0$. For the Fokker-Planck operator
\eqref{eq:FP}, we have the equilibrium measure
$M(v)=(2\pi)^{-d/2} \ee^{-v^2/2}$ for $\sV = \R^d$ or the uniform
probability measure on $\S^{d-1}$ for $\sV = \S^{d-1}$. By the
weight we find directly that
\begin{equation*}
  \int_{\sV} g(v) \cL g(v) \frac{\dd v}{M(v)}
  =
  \begin{dcases}
    -\int_{\sV} |\nabla g|^2 \frac{\dd v}{M(v)}
    &\text{if } \sV = \S^{d-1},\\
    -\int_{\sV}
    |\nabla_v g + v g|^2
    \frac{\dd v}{M(v)}
    &\text{if } \sV =\R^d,
  \end{dcases}
\end{equation*}
so that the spectral gap follows from the Poincaré inequality of the
Gaussian measure \cite{villani-2009-hypocoercivity,dolbeault-volzone-2012-improved-poincare}.

For the linear Boltzmann operator~\eqref{eq:LB} and a given
equilibrium measure $M=M(v)$, the symmetry condition
$ k(v,v_*) M(v_*) = k(v_*,v) M(v) $ corresponds to the detailed
balance and implies that $\cL$ is symmetric. For the dissipation, we
find
\begin{equation*}
  \int_{\sV} g(v) \cL g(v) \frac{\dd v}{M(v)}
  = -\frac{1}{2} \int_{v,v_* \in \sV}
  k(v,v_*) M(v_*)
  \left[
    \frac{g(v)}{M(v)} -
    \frac{g(v_*)}{M(v_*)}
  \right]^2
  \dd v \dd v_*.
\end{equation*}
For the specific kernels of the linear Boltzmann operator, several
conditions for the spectral condition are known
\cite{lods-mouhot-toscani-2008-relaxation-ficks-boltzmann,bisi-canizo-lods-2015-entropy-boltzmann,lods-mokhtar-kharroubi-2017-convergence-boltzmann,canizo-einav-lods-2018-boltzmann}.

Another viewpoint for the spectral gap can be obtained from the
Cheeger's inequality which is another popular tool to establish
spectral gaps
\cite{cheeger-1970-laplacian,chung-2010-four-cheeger,lee-gharan-trevisan-2014-multiway-cheeger,spielman-2015-conductance,de-mondino-2021-sharp-cheeger-k}.

Adapting to our case, denote the size of a set \(A \subset \sV\)
measured by \(M\) as \(|A|_M = \int_{A} M(v) \dd v\). Then the
condition
\begin{equation*}
  \Phi := \inf_{A \subset \sV}
  \frac{\int_{v\in A}\int_{v_*\in A^c} \sqrt{q(v,v_*)M(v)M(v_*)} \dd
    v_* \dd v}{\min(|A|_{M},|A^c|_{M})} > 0
  \text{ with }
  q(v,v_*) = k(v,v_*) M(v_*)
\end{equation*}
implies a spectral gap as a weighted Cheeger's inequality, which
intuitively says that we cannot split the velocity space $\sV$ into
two separate parts between which the mass is equiliberating slowly.

\begin{lemma}[Cheeger's inequality]
  For the linear Boltzmann operator, introduce the Rayleigh
  coefficients
  \begin{equation*}
    R(g) = \frac{-\ip{g}{\cL g}_{L^2(M^{-1})}}{\|g\|_{L^2(M^{-1})}^2}.
  \end{equation*}
  Then the spectral gap
  \begin{equation*}
    \lambda_1 = \inf_{g \perp M} R(g)
  \end{equation*}
  is bounded by
  \begin{equation*}
    2\lambda_1 \ge \Phi^2.
  \end{equation*}
\end{lemma}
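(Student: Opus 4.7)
The plan is to run the classical Cheeger argument from Markov chain theory, adapted to the jump kernel $q(v,v_*)=k(v,v_*)M(v_*)$ and matched carefully to the specific definition of $\Phi$: the $\sqrt{q(v,v_*)M(v)M(v_*)}$ appearing in its numerator is precisely the geometric mean that Cauchy–Schwarz produces when one splits the key identity $|f(v)^2-f(v_*)^2|=|f(v)-f(v_*)|\cdot|f(v)+f(v_*)|$ and pairs $\sqrt q$ with the first factor and $\sqrt{M(v)M(v_*)}$ with the second. First I would switch to symmetric variables. Setting $h:=g/M$ and using detailed balance $q(v,v_*)=q(v_*,v)$, the dissipation formula recalled just before the lemma reads
\begin{equation*}
  -\ip{g}{\cL g}_{L^2(M^{-1})} = D(h) := \frac{1}{2}\int_{\sV\times\sV} q(v,v_*)\,[h(v)-h(v_*)]^2 \dd v\dd v_*,
\end{equation*}
while $\|g\|_{L^2(M^{-1})}^2=\|h\|_{L^2(M)}^2$, and the constraint $g\perp M$ becomes $\int hM=0$. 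The goal then is $D(h)\ge \tfrac{\Phi^2}{2}\|h\|_{L^2(M)}^2$ for every such $h$.

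Next I would reduce to the case of a nonnegative function whose support has $M$-measure $\le 1/2$ by a median truncation. Let $m$ be an $M$-median of $h$, and write $h-m=h_+-h_-$ with $h_\pm\ge 0$ having disjoint supports, each of $M$-measure $\le 1/2$. Since $D$ is invariant under adding a constant to $h$, and since a brief case check on the supports shows that the cross term $[h_+(v)-h_+(v_*)][h_-(v)-h_-(v_*)]$ is $\le 0$ pointwise, one gets $D(h)\ge D(h_+)+D(h_-)$. Because $\int hM=0$, $L^2(M)$-optimality of the mean gives $\|h-m\|_{L^2(M)}^2\ge\|h\|_{L^2(M)}^2$, so it suffices to establish $2D(f)\ge \Phi^2\|f\|_{L^2(M)}^2$ for each $f\in\{h_+,h_-\}$.

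For such $f\ge 0$ with $|\supp f|_M\le 1/2$, I would apply the layer-cake formula to $f^2$ with superlevel sets $A_t=\{f^2>t\}\subset \supp f$, so $|A_t|_M\le 1/2$ and $\min(|A_t|_M,|A_t^c|_M)=|A_t|_M$. The identity $|f(v)^2-f(v_*)^2|=\int_0^\infty|\indicator_{A_t}(v)-\indicator_{A_t}(v_*)|\dd t$ and the definition of $\Phi$ yield
\begin{equation*}
  \int_{\sV\times\sV}|f^2(v)-f^2(v_*)|\sqrt{q(v,v_*)M(v)M(v_*)}\dd v\dd v_* \ge 2\Phi\int_0^\infty|A_t|_M\dd t = 2\Phi\|f\|_{L^2(M)}^2,
\end{equation*}
whereas Cauchy–Schwarz on the same left-hand side, splitting the factors as indicated and using $\int M=1$ together with $(a+b)^2\le 2(a^2+b^2)$ for the $\sqrt{MM}$ factor, gives
\begin{equation*}
  \int_{\sV\times\sV}|f^2(v)-f^2(v_*)|\sqrt{q(v,v_*)M(v)M(v_*)}\dd v\dd v_*  \le \sqrt{2D(f)}\cdot 2\|f\|_{L^2(M)}.
\end{equation*}
Combining the two bounds yields $\Phi\|f\|_{L^2(M)}\le \sqrt{2D(f)}$, hence $D(f)\ge\tfrac{\Phi^2}{2}\|f\|_{L^2(M)}^2$ and therefore $2\lambda_1\ge\Phi^2$.

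The step needing the most care is the decomposition $D(h)\ge D(h_+)+D(h_-)$ after median truncation: the sign of the cross term rests crucially on the disjoint-supports structure, and has to be verified by cases according to the location of $v$ and $v_*$ relative to $\supp h_\pm$. Everything else is a standard Cheeger assembly, matched to the $\sqrt{qMM}$ normalisation built into $\Phi$ precisely so that Cauchy–Schwarz closes in a single step.
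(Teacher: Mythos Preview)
Your proof is correct and follows essentially the same Cheeger strategy as the paper: reduce to a nonnegative function supported on a set of $M$-measure at most $1/2$, apply the co-area/layer-cake identity to $f^2$, and close with the Cauchy--Schwarz splitting $|f^2-f_*^2|=|f-f_*|\cdot|f+f_*|$ matched to the $\sqrt{q\,M M_*}$ weight in $\Phi$. The only notable difference is in the reduction step: the paper shifts $g$ by $sM$ until the negative part has $L^2$-mass exactly $1/2$ and then works with \emph{one} of $g_\pm$ (incurring the factor $2$ there), whereas you shift by a median so that \emph{both} $h_\pm$ have small support, prove $D(h)\ge D(h_+)+D(h_-)$ via the sign of the cross term, and sum the two estimates; the paper also phrases the level-set step probabilistically (a uniform random threshold and an expectation argument) rather than via the deterministic layer-cake formula, but this is purely cosmetic.
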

\begin{proof}
  Given $g \perp M$, assume wlog that $\|g\|_{L^2(M^{-1})} = 1$. We
  first claim that there are two vectors $g_-$ and $g_+$ with
  disjoint support such that $\max(R(g_-),R(g_+)) \le 2
  R(g)$. Indeed, for $s \in \R$, define the vector $g^s$ by
  $g^s(v) = g(v) + s\, M(v)$ and set
  \begin{equation*}
    g_{-}^s = \min(g^s,0) \text{ and }
    g_{+}^s = \max(g^s,0).
  \end{equation*}
  As $\|g_{-}^s\|_{L^2(M^{-1})}$ varies continuously for changing $s$
  and $\|g_{-}^s\|_{L^2(M^{-1})} \to \infty$ as $s\to-\infty$ and
  $\|g_{-}^s\|_{L^2(M^{-1})} \to 0$ as $s\to+\infty$, we can find some
  $\bar s$ such that $\|g_{-}^{\bar{s}}\|_{L^2(M^{-1})}^2=1/2$. Moreover,
  \begin{equation*}
    \|g_{-}^{\bar{s}}\|_{L^2(M^{-1})}^2
    + \|g_{+}^{\bar s}\|_{L^2(M^{-1})}^2
    = \|g^{\bar{s}}\|_{L^2(M^{-1})}^2
    = 1 + \bar{s}^2 \ge 1,
  \end{equation*}
  where we used that $g \perp M$. Hence we also have that
  $\|g_{+}^{\bar{s}}\|_{L^2(M^{-1})}^2\ge1/2$. From the expression of the
  dissipation
  \begin{equation*}
    \ip{g_{\pm}^{\bar s}}{-\cL g_{\pm}^{\bar s}}
    \le \ip{g^{\bar s}}{-\cL g^{\bar s}}
    = \ip{g}{-\cL g},
  \end{equation*}
  where we used in the last step that $\cL M = 0$. Hence we indeed
  find that $g_{\pm}^{\bar s}$ have disjoint support and
  \begin{equation*}
    \max(R(g_-),R(g_+)) \le 2 R(g).
  \end{equation*}

  By taking the one of $g_\pm^{\bar s}$ with smaller support, we can
  find $\bar g$ such that $R(\bar{g}) \le 2 R(g)$ and
  $|\supp \bar g|_{M} \le 1/2$. By rescaling $\bar{g}$, we may also
  assume that $\sup_{v \in\sV} \bar{g}(v) M^{-1}(v) = 1$.

  Take $t$ as a uniform random variable on the interval $[0,1]$
  and define the random set
  \begin{equation*}
    S_t := \{ v \in \supp \bar g : |\bar{g}(v)|^2 \ge t M^2(v) \}.
  \end{equation*}
  Then we find directly that
  \begin{equation*}
    \E[ |S_t|_{M} ] = \| \bar g \|_{L^2(M^{-1})}^2.
  \end{equation*}
  Also
  \begin{equation*}
    \begin{aligned}
      &\E\left[
        \int_{v \in S_t} \int_{v_*\in S_t^c} \sqrt{q(v,v_*)M(v)M(v_*)} \dd v_* \dd v
      \right]\\
      &= \frac 12 \int_{v,v_* \in \sV}
      \left|
        \left(
          \frac{\bar g(v)}{M(v)}
        \right)^2
        -
        \left(
          \frac{\bar g(v_*)}{M(v_*)}
        \right)^2
      \right|
      \sqrt{q(v,v_*)M(v)M(v_*)} \dd v_* \dd v \\
      &= \frac 12 \int_{v,v_* \in \sV}
      \left|
          \frac{\bar g(v)}{M(v)}
        -
          \frac{\bar g(v_*)}{M(v_*)}
      \right|
      \left|
          \frac{\bar g(v)}{M(v)}
        +
          \frac{\bar g(v_*)}{M(v_*)}
      \right|
      \sqrt{q(v,v_*)M(v)M(v_*)} \dd v_* \dd v \\
      &\le \frac 12
      \left(\int_{v,v_* \in \sV}
      \left(
          \frac{\bar g(v)}{M(v)}
        -
          \frac{\bar g(v_*)}{M(v_*)}
      \right)^2
      q(v,v_*) \dd v_* \dd v\right)^{1/2} \times \\
    & \hspace{3cm}
      \times \left(\int_{v,v_* \in \sV}
        \left( \frac{\bar g(v)}{M(v)} +
          \frac{\bar g(v_*)}{M(v_*)}
      \right)^2
      M(v)M(v_*) \dd v_* \dd v\right)^{1/2} \\
      &\le \sqrt{R(\bar g)}\; \| \bar g \|_{L^2(M^{-1})}^2.
    \end{aligned}
  \end{equation*}
  Hence there exists a $t \in (0,1)$ such that
  \begin{equation*}
    \int_{v \in S_t} \int_{v_*\in S_t^c} \sqrt{q(v,v_*)M(v)M(v_*)} \dd
    v_* \dd v
    \le \sqrt{R(\bar{g})}\, |S_t|_{M}.
  \end{equation*}
  As $|S_t|_{M} \le 1/2$ by construction, this set $S_t$ implies
  \begin{equation*}
    \Phi \le \sqrt{R(\bar g)} \le \sqrt{2 R(g)},
  \end{equation*}
  which yields the claimed bound.
\end{proof}

\subsection{Boundary compatibility}\label{sec:boundary-compatibility}

As already noted in \cite[Eq.~(5)]{MR432101}, it is natural to
derive the boundary interaction like the Boltzmann equation from
some reversible dynamics. Assuming that the boundary has the
final temperature so that the system converges to equilibrium,
this yields the detailed-balance condition
\begin{equation}
  \label{eq:detailed-balance-bdd}
  r(x,v,v_*) f_\infty(x,v_*) = r(x,v_*,v) f_\infty(x,v).
\end{equation}
By the mass conservation \eqref{eq:mass-conservation-bdd}, the
operator $\cR$ can be understood as step for a time-discrete Markov
chain on $\Gamma_+$ and then the detailed-balance
equation~\eqref{eq:detailed-balance-bdd} means that it is a reversible
Markov chain with stationary state $f_\infty$.

For a given $\mathfrak f \in L^2(\Gamma_+,(\vec{n}\cdot v)\dd \nu)$,
the corresponding limiting state of the Markov chain is denoted by
$\Pi \mathfrak f$ and takes the form
\begin{equation}
  \label{eq:equi-bdd}
  (\Pi \mathfrak f)(x,v)
  = \lambda(x,v) f_{\infty}(x,v)
\end{equation}
where \(\lambda(x,v) = \lambda(x,v_*)\) if \(r(x,v,v_*) > 0\).  For
Maxwell boundary conditions \eqref{eq:boundary-concrete} it is
explicitly given by
\begin{equation}
  \label{eq:proj-maxwell}
  (\Pi \mathfrak f)(x,v)
  =
  \begin{cases}
    \mathfrak f(x,v) &\text{if } \alpha(x) = 0, \\
    \sqrt{2\pi} M(v) \int_{(\vec{n}\cdot v_*)}
    \mathfrak f(x,v_*)\, (\vec{n}\cdot v_*) \dd v_* &\text{if } \alpha(x) \in (0,1].
  \end{cases}
\end{equation}
\begin{remark}
  In terms of Markov chains, \eqref{eq:equi-bdd} states that
  \(\Pi \mathfrak f\) is proportional to \(f_{\infty}\) where the
  proportionality factor can be different in different communicating
  classes. For many boundary conditions with a diffusive component,
  e.g.\ Maxwell boundary conditions with \(\alpha>0\), all velocities
  are related so that
  \((\Pi \mathfrak f)(x,v) = \lambda(x) f_{\infty}(x,v)\).  A case
  with different communicating classes would be boundary conditions
  which only thermalise the normal velocity component but keep the
  tangential velocity component unchanged.
\end{remark}

Indeed we can verify \hypref{h:bdd} in this framework, if there exists
either a uniform spectral gap or we have the special algebra
\eqref{eq:proj-maxwell} (in which case $\alpha$ can be
arbitrary). Note that it handles further boundary conditions discussed
in \cite{MR432101}.

\begin{proposition}[Boundary compatibiliy]\label{thm:boundary-compatibility}
  Let $r$ be a boundary condition kernel with a projection $\Pi$
  satisfying \eqref{eq:equi-bdd}. Assume  either
  \begin{itemize}
  \item the uniform bound
    \begin{equation}
      \label{eq:uniform-gap}
      \int_{\Gamma_+}
      \left[(\mathfrak f - \Pi \mathfrak f)^2
        + (R \mathfrak f - \Pi \mathfrak f)^2
      \right]
      \dd \nu
      \lesssim
      \int_{\Gamma_+}
      \Big[ \mathfrak f^2 - \left(\cR \mathfrak f \right)^2 \Big]
       \dd \nu,
    \end{equation}
  \item or that $r$ has the form of Maxwell boundary
    conditions~\eqref{eq:boundary-concrete} for any
    $\alpha : \partial \Omega \to [0,1]$.
  \end{itemize}
  Then the boundary compatibility condition \hypref{h:bdd} is satisfied.
\end{proposition}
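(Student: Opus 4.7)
The plan is to verify each of the three constituents of \hypref{h:bdd} — real-valuedness, contractivity in $L^2(\Gamma_+,\dd\nu)$, and the quantitative bound \eqref{eq:boundary-bound} — by exploiting the Markov structure of $\cR$ and the detailed-balance relation \eqref{eq:detailed-balance-bdd}. Real-valuedness is immediate from $r\ge 0$. For contractivity, I would first combine \eqref{eq:detailed-balance-bdd} with \eqref{eq:mass-conservation-bdd} to obtain both $\cR f_\infty = f_\infty$ and the mass identity $\int (\cR h)(\vec n \cdot v)\dd v = \int h\,(\vec n \cdot v)\dd v$; Cauchy-Schwarz applied to the probability measure $\mu_v(\dd v_*) = r(v,v_*)(\vec n\cdot v_*)\dd v_*$ then yields the key pointwise estimate $(\cR g)^2 \le f_\infty \cR(f_\infty^{-1} g^2)$, and integration against $\dd\nu$ gives $\|\cR g\|_{L^2(\dd\nu)} \le \|g\|_{L^2(\dd\nu)}$.

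The heart of the argument is a cancellation identity for the integrand in \eqref{eq:boundary-bound}. I would decompose $g = g_0 + g_1$ with $g_0 := \Pi g$ and $g_1 := g - g_0$. By \eqref{eq:equi-bdd}, within each communicating class of the Markov kernel one has $g_0 = \lambda f_\infty$ for a class-constant $\lambda$; because $r(v,v_*)$ vanishes across classes, this yields $\cR g_0 = g_0$, $f_\infty \cR(f_\infty^{-1} g_0^2) = g_0^2$, and $f_\infty\cR(f_\infty^{-1} g_0 g_1) = g_0 \cR g_1$. Expanding $(g_0 + g_1)^2$ in both quadratic forms, all $g_0$-carrying terms cancel, leaving
\[
f_\infty\cR(f_\infty^{-1} g^2) - (\cR g)^2 \;=\; f_\infty\cR(f_\infty^{-1} g_1^2) - (\cR g_1)^2 \;\ge\; 0.
\]
Bounding $\varphi$ by its $L^\infty$ norm and using the mass-conservation identity on $h = f_\infty^{-1} g_1^2$ then yields
\[
\int_{\Gamma_+} \varphi \bigl[f_\infty\cR(f_\infty^{-1} g^2) - (\cR g)^2\bigr] \dd\nu \;\le\; \|\varphi\|_{L^\infty(\Gamma_+)} \, \|g - \Pi g\|_{L^2(\dd\nu)}^2.
\]

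To close the two cases, in Case 1 the uniform gap \eqref{eq:uniform-gap} bounds $\|g - \Pi g\|_{L^2(\dd\nu)}^2$ directly by the entropy production $\int[g^2 - (\cR g)^2]\dd\nu$, delivering \eqref{eq:boundary-bound}. In Case 2 (Maxwell), the hypothesis \eqref{eq:uniform-gap} degenerates where $\alpha(x) \to 0$, so I would close by explicit computation using \eqref{eq:boundary-concrete}--\eqref{eq:proj-maxwell}: the decomposition reads $g_1 = g - \sqrt{2\pi} M \langle g\rangle$ with $\langle g_1\rangle = 0$ and $\cR g_1 = (1-\alpha) g_1$, from which the factorisation $(g - \cR g)(g + \cR g)$ and an elementary integration yield $\int[g^2 - (\cR g)^2]\dd\nu = \int \alpha(2-\alpha) g_1^2 \dd\nu$; carrying out the Gaussian integrals in the right-hand side of the previous display returns exactly this same weight $\alpha(2-\alpha) g_1^2$, giving \eqref{eq:boundary-bound} with $C_r = 1$ uniformly in $\alpha \in [0,1]$ (points where $\alpha = 0$ contribute zero on both sides). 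The main delicate point I foresee is the class-decomposition in Step 2 when $r$ has several communicating classes: the identity $f_\infty \cR(f_\infty^{-1} g_0 g_1) = g_0 \cR g_1$ requires that $\lambda$ takes the same value at $v$ and $v_*$ whenever $r(v, v_*) > 0$, which is exactly the content of \eqref{eq:equi-bdd}.
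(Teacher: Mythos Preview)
Your proof is correct and shares the paper's central idea: exploit the Markov/detailed-balance structure so that the $\Pi$-component cancels, reducing both the left-hand side of \eqref{eq:boundary-bound} and the dissipation to quadratic expressions in $g - \Pi g$ with matching $\alpha$-weights in the Maxwell case.

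The organizational route differs slightly. You decompose $g = \Pi g + g_1$ and prove the pointwise cancellation
\[
f_\infty\,\cR\!\bigl(f_\infty^{-1} g^2\bigr) - (\cR g)^2
= f_\infty\,\cR\!\bigl(f_\infty^{-1} g_1^2\bigr) - (\cR g_1)^2 \ge 0,
\]
then bound $\varphi$ by $\|\varphi\|_\infty$ on this non-negative quantity; in the Maxwell case the explicit Gaussian integration of the right-hand side yields exactly the weight $\alpha(2-\alpha)\,g_1^2$, matching the dissipation with $C_r=1$. The paper instead passes to the adjoint form $\int[\cR^T(\varphi)\,\mathfrak f^2 - \varphi(\cR\mathfrak f)^2]\dd\nu$, subtracts $\Pi(f_\infty\varphi)/f_\infty$ from both $\cR^T\varphi$ and $\varphi$ (producing one factor of $\|\varphi\|_\infty$ times the dissipation), and then proves the identity \eqref{eq:bdd-comp-inter} that replaces $\mathfrak f$ by $\mathfrak f - \Pi\mathfrak f$ in the remainder; in the Maxwell case this remainder carries the weight $\alpha(1-\alpha)$. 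Your primal decomposition is somewhat more direct and gives a slightly better constant; the paper's dual decomposition makes more transparent why the hypothesis \eqref{eq:uniform-gap} needs a bound on \emph{both} $\mathfrak f - \Pi\mathfrak f$ and $\cR\mathfrak f - \Pi\mathfrak f$. Either way the substance is the same.
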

\begin{proof}
  By \eqref{eq:equi-bdd} it holds that
  \begin{align*}
    &\int_{\Gamma_+} \left[
      \frac{(\cR^*(f_\infty \varphi))}{f_\infty}
      \mathfrak f^2
      - \varphi \left( \cR \mathfrak f \right)^2
      \right]  \dd \nu \\
    &\le
      \int_{\Gamma_+} \left[
      \frac{(\cR^*(f_\infty \varphi)-\Pi(f_\infty \varphi))}{f_\infty}
      \mathfrak f^2
      - \frac{(f_\infty\varphi - \Pi(f_\infty \varphi))}{f_\infty}
      \left( \cR \mathfrak f \right)^2
      \right]  \dd \nu \\
    &\qquad + \| \phi \|_\infty
      \int_{\Gamma_+}
      \Big[ \mathfrak f^2 - \left(\cR \mathfrak f \right)^2 \Big]
       \dd \nu.
  \end{align*}
  Furthermore, note that
  \begin{equation} \label{eq:bdd-comp-inter}
    \begin{aligned}
      &\int_{\Gamma_+} \left[
        \frac{(\cR^*(f_\infty \varphi)-\Pi(f_\infty \varphi))}{f_\infty}
        \mathfrak f^2
        - \frac{(f_\infty\varphi - \Pi(f_\infty \varphi))}{f_\infty}
        \left( \cR \mathfrak f \right)^2
      \right]  \dd \nu \\
      &=
      \int_{\Gamma_+} \left[
        \frac{(\cR^*(f_\infty \varphi)-\Pi(f_\infty \varphi))}{f_\infty}
        (\mathfrak f- \Pi \mathfrak f)^2
        - \frac{(f_\infty\varphi - \Pi(f_\infty \varphi))}{f_\infty}
        \left( \cR \mathfrak f - \Pi \mathfrak f\right)^2
      \right]  \dd \nu
    \end{aligned}
  \end{equation}
  because over regions that $R$ is relating
  $(\Pi \mathfrak f)/f_\infty$ is constant by \eqref{eq:equi-bdd} so
  that by the definition of the adjoint
  \begin{equation*}
    \int_{\Gamma_+}
    \left[
      \frac{(\cR^*(f_\infty \varphi)-\Pi(f_\infty \varphi))}{f_\infty}
      \mathfrak f\; \Pi \mathfrak f
      -
      \frac{(f_\infty\varphi - \Pi(f_\infty \varphi))}{f_\infty}
      R \mathfrak f\; \Pi \mathfrak f
    \right] \dd \nu
    = 0
  \end{equation*}
  and by the conservation of mass~\eqref{eq:mass-conservation-bdd}
  \begin{equation*}
    \int_{\Gamma_+}
    \left[
      \frac{(\cR^*(f_\infty \varphi)-\Pi(f_\infty \varphi))}{f_\infty}
      (\Pi \mathfrak f)^2
      -
      \frac{(f_\infty\varphi - \Pi(f_\infty \varphi))}{f_\infty}
      (\Pi \mathfrak f)^2
    \right] \dd \nu
    = 0.
  \end{equation*}

  In the case of the uniform bound~\eqref{eq:uniform-gap}, the resulting
  form in~\eqref{eq:bdd-comp-inter} can be directly estimated by the
  boundary dissipation as required.

  In the case that $R$ is a Maxwell boundary condition, we have that
  $R \mathfrak f = (1-\alpha)\mathfrak f + \alpha \Pi \mathfrak f$ so that the
  final expression from~\eqref{eq:bdd-comp-inter} is
  \begin{equation*}
    \begin{aligned}
      &\int_{\Gamma_+} \left[
        \frac{(\cR^*(f_\infty \varphi)-\Pi(f_\infty \varphi))}{f_\infty}
        (\mathfrak f- \Pi \mathfrak f)^2
        - \frac{(f_\infty\varphi - \Pi(f_\infty \varphi))}{f_\infty}
        \left( \cR \mathfrak f - \Pi \mathfrak f\right)^2
      \right]  \dd \nu \\
      &=
      \int_{\Gamma_+}
      \left[(1-\alpha) - (1-\alpha)^2\right]
      \left[
        \frac{(f_\infty\varphi - \Pi(f_\infty \varphi))}{f_\infty}
        (\mathfrak f- \Pi \mathfrak f)^2
      \right]
       \dd \nu \\
      &\le
      \| \phi \|_\infty
      \int_{\Gamma_+}
      (1-\alpha) \alpha\,
      (\mathfrak f- \Pi \mathfrak f)^2
       \dd \nu.
    \end{aligned}
  \end{equation*}
  In this case the boundary dissipation is
  \begin{equation*}
    \int_{\Gamma_+}
    \Big[ \mathfrak f^2 - \left(\cR \mathfrak f \right)^2 \Big]
     \dd \nu
    =
    \int_{\Gamma_+}
    \alpha\, (2-\alpha)
    \Big[ \mathfrak f - \Pi \mathfrak f \Big]^2
     \dd \nu,
  \end{equation*}
  which yields the uniform bound for any
  $\alpha : \partial \Omega \to [0,1]$.
\end{proof}

\subsection{Proof of the control condition for deterministic
  transport}

In this subsection, we cover the case of deterministic transport
\eqref{eq:ugcc-deterministic} in \cref{cor:LB}. This conclusion also
proves the decay in \cref{ex:weight-gain}. Instead of using the
transport control condition \hypref{h:tcc}, we can directly verify the
transport mapping condition \hypref{h:tmc} with vanishing
\(\tilde \psi = 0\).

In this case, we take \(\cB = 0\) so that \(\cG_t\) is the dual
transport semigroup following the trajectories backward. We then set
\(\tilde \psi \equiv 0\) and
\begin{align}
  \label{eq:deterministic-psi}
  \psi(t,x,v) :=
  \frac{\chi(x) w(v)}
  {\int_0 ^T \chi(X_{t-s}(x,v)) w(V_{t-s}(x,v)) \dd s},\quad
  \forall \, (t,x,v) \in [0,T] \times \Omega \times \sV.
\end{align}
It is well-defined as the denominator is uniformly bounded from
below. Moreover it is
$W^{1,\infty}([0,T] \times \Omega \times \sV)$, non-negative,
$\supp \psi(t,\cdot,v) = \supp \chi \subset \sigma$ and
importantly
\begin{align*}
  \forall \, (x,v) \in \Omega \times \sV, \qquad
  \int_0 ^T \left( \cG_t \psi_t \right)(x,v) \dd t = 1.
\end{align*}

For the required bounds in \hypref{h:tmc} note that \(G\) from
\eqref{eq:tmc:def-g} becomes in this choice
\begin{equation*}
  G(t,x,v) =
  \frac{\int_{s=t}^T \chi(X_{t-s}(x,v)) w(V_{t-s}(x,v)) \dd s}
  {\int_{s=0}^T \chi(X_{t-s}(x,v)) w(V_{t-s}(x,v)) \dd s}.
\end{equation*}
Then by construction \(|G| \le 1\). In the case of a bounded collision
operator, this implies the required bounds \eqref{eq:g-bound-2} and
\eqref{eq:g-bound-1} by the spectral gap of \(\cL\).

For the case of the Fokker-Planck operator, the assumed propagation of
regularity along the transport shows with \(|\nabla w| \lesssim w\)
and \eqref{eq:ugcc-deterministic-upper} that \(G \in W^{1,\infty}\).
In the case $\sV=\R^d$, we find that
\begin{equation*}
  \begin{split}
    &- \int_{0}^T \int_{\Omega \times
      \sV}  G(t,x,v) (\sigma f \cL f)(t,x,v) \dd \mu
    \dd t \\
    &= \int_{0}^T \int_{\Omega \times \sV}
    \sigma G\, |(\nabla_v + v)f|^2 \dd \mu \dd t
    + \int_{0}^T \int_{\Omega \times \sV}
    \sigma f \nabla_v G \cdot (\nabla_v + v)f \dd \mu \dd t \\
    &\le \int_{0}^T \int_{\Omega \times \sV}
    |G|\, \sigma\, |(\nabla_v + v)f|^2 \dd \mu \dd t \\
    &\qquad +
    \int_0^T \int_{\Omega}
    \| \nabla_v G(t,x,\cdot) \|_\infty \sqrt{\sigma}\,
    \| f(t,x,\cdot) \|_{L^2(M^{-1})}
    \| \sqrt{\sigma} (\nabla_v +v) f(t,x,\cdot) \|_{L^2(M^{-1})}
    \ee^{\phi} \dd x \dd t
  \end{split}
\end{equation*}
and
\begin{equation*}
  \begin{split}
    \left| \int_{\sV}
      G(t,x,v) (\sigma \cL f)(t,x,v) \dd v \right|^2
    &=
    \left| \int_{\sV}
      \sigma\, \nabla_v G(t,x,v) \cdot (\nabla_v+v)f(t,x,v) \dd v
    \right|^2 \\
    &\le
    \left(
      \int_{\sV} \sigma |\nabla_vG|^2 M(v)\dd v
    \right)
    \left(
      \int_{\sV} \sigma |(\nabla_v+v)f|^2 \frac{\dd v}{M(v)}
    \right).
  \end{split}
\end{equation*}
and the desired bounds follow from the fact that $G_t$ and
$\sqrt{\sigma} \nabla_v G_t$ are uniformly bounded by construction.
The case of the Laplace-Baltrami operator is similar.

\subsection{Proof that \hypref{h:simple-tcc} implies \hypref{h:tcc}
  (with vanishing corrector $\mathfrak C =0$)}

We first cover the case 2 of \hypref{h:simple-tcc}.  We claim
\eqref{eq:gcc} implies that
\begin{equation}\label{eq:decay-l1-dual}
  \begin{dcases}
    \partial_t f + \cT f - \sigma \cL f = - \chi f
    &\text{in } \Omega \times \sV, \\
    \gamma_- f(v) = \left( \cR \gamma_+ f \right)
    \big(v-2(\vec{n}\cdot v)\vec{n}\big) &\text{in } \partial
    \Omega \times \sV \text{ with } \vec{n} \cdot v \le 0,
  \end{dcases}
\end{equation}
decays in $L^1$. To prove the claimed decay, it suffices by linearity
to take the positive part and note the decay as
\begin{equation*}
  \| f_T \|_{L^1} = \| f_{\init} \|_{L^1}
  - \int_{s=0}^T \int |f_s \chi| \dd x \dd v \dd s.
\end{equation*}
By the assumption \eqref{eq:gcc} it holds that
\begin{equation*}
  \int_{s=0}^T \int \sfull_s f \chi \dd x \dd v \dd s \ge c
\end{equation*}
and by Duhamel
\begin{equation*}
  \int_{s=0}^T \int |f_s - \sfull_s f| \dd x \dd v \dd s
  \lesssim \int_{s=0}^T \int |f_s \chi| \dd x \dd v \dd s.
\end{equation*}
As $\chi$ is bounded we thus find the claimed decay in \(L^1\).

By duality, the dual evolution to \eqref{eq:decay-l1-dual} decays in
\(L^\infty\) which is the required decay of \eqref{eq:tcc-decay} in
\hypref{h:tcc} with \(\cB = \sigma \cL^T\). To verify the bounds of
\eqref{eq:diff-bound-2} note by the choice of \(\cB\) that
\(\cB^*(f f_\infty) - f_{\infty} \sigma \cL f =0\) which proves the
second part. For the first part note that
\begin{equation*}
  \int_{\Omega \times \sV}
  [\cB^*(f^2) - 2 f \sigma \cL f]\, (1-\varphi_t)
  \dd \mu
  \lesssim \| 1 - \varphi_t \|_{L^\infty}
  \int_{\Omega \times \sV}
  \left|\cB^*(f^2) - 2 f \sigma \cL f\right|
  \dd \mu
\end{equation*}
As \(\varphi_t\) is bounded in \(L^\infty\), the factor \(\| 1 -
\varphi_t \|_{L^\infty}\) is bounded. By the assumed sign in
\eqref{eq:gamma-2-condition-l}, we can drop the absolute value and
find
\begin{equation*}
  \int_{\Omega \times \sV}
  \left|\cB^*(f^2) - 2 f \sigma \cL f\right|
  \dd \mu
  =
  \int_{\Omega \times \sV}
  \cB^*(f^2) - 2 f \sigma \cL f
  \dd \mu
  = -2
  \int_{\Omega \times \sV}
  f \sigma \cL f
  \dd \mu
\end{equation*}
where we used that
\(\int_{\Omega \times \sV} \cB^*(f^2) \dd \mu = \int \sigma
\cL(f^2/f_\infty) f_\infty \dd \mu = 0\) by mass conservation. This
shows the required first bound in \eqref{eq:diff-bound-2}.

For the case 1 and 1' of \hypref{h:simple-tcc}, the given transport
control assumptions imply by the same decay of \eqref{eq:tcc-decay}
with \(\cB=0\). Then \eqref{eq:diff-bound-2} can be verified as in the
previous subsection for the deterministic case.

\subsection{Proof of the $\Gamma$ condition}\label{sec:gamma-2-verification}

Here we prove \cref{thm:gamma-2-verification}. It is classical for the
Fokker-Planck operator so that we focus on the linear Boltzmann
operator of the form \eqref{eq:LB} with a reversible kernel, i.e.\ for
all \(v,v_*\in \sV\) it holds that $ k(v,v_*) M(v_*) = k(v_*,v) M(v) $.

\begin{proof}[Proof of \cref{thm:gamma-2-verification}]
  For the linear Boltzmann operator we find
  \begin{equation*}
    \begin{aligned}
      \left[M \cL \left(\frac{f^2}{M}\right)
      - 2 f \cL f\right](v)
      &= \int k(v_*,v) \dd v_*
        \left[
        f(v) - \frac{\int k(v,v_*) f(v_*) \dd v_*}{\int k(v_*,v) \dd v_*}
        \right]^2 \\
      &\quad
        - \frac{\left[\int k(v,v_*) f(v_*) \dd v_*\right]^2}{\int k(v_*,v)
        \dd v_*}
        + M(v) \int k(v,v_*) \frac{f(v_*)^2}{M(v_*)}\dd v_*
    \end{aligned}
  \end{equation*}
  Under the reversibility condition
  $ k(v,v_*) M(v_*) = k(v_*,v) M(v) $ we find
  \begin{equation*}
    \begin{split}
      &M(v) \int k(v,v_*) \frac{f(v_*)^2}{M(v_*)} \dd v_*
      -
      \frac{\left[\int k(v,v_*) f_* \dd v_*\right]^2}{\int k(v_*,v)
        \dd v_*} \\
      &= M(v) \int
      \left(
        \frac{f(v_*)}{M(v_*)}
        -
        \frac{\int k(v,w) f(w) \dd w}{\int k(v,w) M(w) \dd w}
      \right)^2
      k(v,v_*) M(v_*) \dd v_* \ge 0
    \end{split}
  \end{equation*}
  which implies the claimed sign.
\end{proof}

\subsection{Proof of \hypref{h:tcc} in the hypoelliptic case}

We now complete the proof of \cref{thm:w-example} by constructing the
corrector operator $\mathfrak C$ in \hypref{h:tcc} for the setting of
\cref{ex:hypoelliptic-decay}. Take $\cB = 0$ in \hypref{h:tcc}. The
characteristics are given by
\begin{equation*}
  \begin{pmatrix}
    X_t \\ V_t
  \end{pmatrix}
  = r
  \begin{pmatrix}
    \sin(\theta + t)\\
    \cos(\theta + t)
  \end{pmatrix}
\end{equation*}
for parameters $r$ and $\theta$ determined by $X_0,V_0$. Introduce a
cutoff function $\gamma : \R \to [0,1]$ with
$\supp \gamma \in [-2,2]$ and $\gamma(x)=1$ for $|x| \le 1$ and
introduce
\begin{equation*}
  a(x,v) := - f_\infty^{-1}(x,v) \gamma(x) \gamma(v)\, x
  \max\left(-1,\min\left(\frac{v}{x}, 1\right)\right).
\end{equation*}
Using the Hörmander notation for the Fokker-Planck operator
$\cL = - \cA^* \cA$ with $\cA = (\nabla_v + v)$ and
$\cA^* = - \nabla_v$, we then take
\begin{equation*}
  \mathfrak C \varphi = \epsilon f_\infty^{-1} \cA^*(f_\infty a \varphi)
\end{equation*}
for a small constant $\epsilon >0$. Then \eqref{eq:tcc-decay} takes
the form
\begin{equation*}
  \partial_t \varphi - \cT \varphi - \epsilon a \nabla_v \varphi
  = -\epsilon f_\infty^{-1} \nabla_v\left(\gamma(x) \gamma(v)\, x
    \max\left(-1,\min\left(\frac{v}{x}, 1\right)\right) \right)
  \varphi
  - \chi \varphi
\end{equation*}
where we take $\chi$ smooth and zero around $x=0$ and with
$\chi \gtrsim 1$ for $|x| \ge 1/2$. The LHS then defines a transport
semigroup which is still essentially a circle for small enough
$\epsilon$.  For $\|(x,v)\| \le 3/4$ the new term on the right hand
side will create some decay. Outside it creates some growth which
can be absorbed by $\chi$ when choosing $\epsilon$ small enough. As
for the compatibility condition~\eqref{eq:diff-bound-2} note that
$|\nabla \varphi_t| \lesssim r^{-1}$ so that we can absorb it with the
factor $\sqrt{\sigma}$.

\appendix

\section{The commutator method for hypoelliptic control}
\label{sec:counter}

In this appendix, we provide another viewpoint on why the uniform
transport control condition can be partially relaxed in the case of
hypoelliptic operators, based on Villani's commutator conditions for
hypocoercivity~\cite{villani-2009-hypocoercivity}, itself inspired by
Hörmander~\cite{hoermander-1967-hypoelliptic}. It is an interesting
example of the commutator method that requires \emph{three}
commutators. We consider a slight generalisation of the setting of
\cref{ex:hypoelliptic-decay} as in \cref{thm:w-example}, with
$\Omega=\sV=\R$ and $\phi(x) = x^2/2$ (in fact the method can cover
small variations of the harmonic potential), and $\sigma = \kappa^2$
with
\begin{itemize}
\item $\kappa$ and $\kappa'$ are bounded and $\kappa' \ge -1/4$,
  and $|\kappa|$ is strictly bounded away from zero in the
  complement of every neighbourhood of $x=0$,
\item $\kappa'(0) = 1$ and $\kappa'(x) |x|$ is uniformly bounded
  for $x \in \R$,
\item $\kappa$ is smooth enough (as needed in the proof).
\end{itemize}

Following the setup of \cite{villani-2009-hypocoercivity} we
equivalently prove the decay of $h := f/f_\infty$ in
$L^2(f_\infty)$ where the equation write
$\partial_t h + \cB h = -\cA^* \cA h$ with
\begin{equation}
  \label{eq:definition-a-and-b}
  \cA = \kappa \partial_v, \qquad
  \cB = v \partial_x - \phi' \partial_v,
\end{equation}
where $\cB^* = -\cB$. Then \cite[Thm~24 and
Remark~26]{villani-2009-hypocoercivity} shows exponential decay
if there exist operators $\cC_0,\dots,\cC_{N_C+1}$,
$\cR_1,\dots,\cR_{N_C+1}$ and $\cZ_1,\dots,\cZ_{N_C+1}$ such that
\begin{align*}
  &\cC_0 = \cA,\\
  &[\cC_j,\cB] = \cZ_{j+1} \cC_{j+1} + \cR_{j+1}, \qquad
    \text{for $0\le j \le  N_c$},\\
  &\cC_{N_C+1} = 0
\end{align*}
and for $k = 0,\dots N_c$
\begin{equation}
  \label{eq:ass-villani}
  \left\{
    \begin{lgathered}
      \text{$[\cA,\cC_k]$ is bounded relative to
        $(\cC_j)_{0\le j \le k}$ and $(\cC_j \cA)_{0\le j \le k-1}$} \\
      \text{$[\cC_k,\cA^*]$ is bounded relative to
        $\mathrm{id}$ and $(\cC_j)_{0\le j \le k}$}\\
      \text{$\cR_k$ is bounded relative to
        $(\cC_j)_{0\le j \le k-1}$ and $(\cC_j \cA)_{0\le j \le k-1}$} \\
      \text{there are positive constants $\lambda_k$ and
        $\Lambda_k$ such that
        $\lambda_j \mathrm{id} \le \cZ_j \le \Lambda_j
        \mathrm{id}$}.
    \end{lgathered}
  \right.
\end{equation}
and
\begin{equation}
  \label{eq:coercive-villani}
  \sum_{j=0}^{N_c} \cC_j^* \cC_j \text{ is coercive.}
\end{equation}

In order to fix the operators, take a cutoff
$\gamma \in C^\infty(\R)$ with $\gamma : \R \to [0,1]$ and
$\gamma(x)=1$ for $|x| \le 1$ and $\supp \gamma \subset
[-2,2]$. As the rescaled version define
$\gamma_\delta(x) = \gamma(x/\delta)$. Then consider
\begin{equation}
  \label{eq:commutator:def-kappa-tilde}
  w := \gamma_\delta \kappa' + (1-\gamma_\delta)
  \quad\text{and}\quad
  \tilde{\kappa} := \frac{\kappa}{w}
\end{equation}
for a sufficiently small $\delta$. By choosing $\delta$
sufficiently small we can ensure that
\begin{equation*}
  \frac 12 \le w \le \frac 32,\quad
  \| \tilde{\kappa} \|_\infty + \| (1+|x|) \tilde{\kappa}' \| \le
  \infty\quad
  \text{and}\quad
  \tilde{\kappa}' \ge - 1/2.
\end{equation*}

We then build a sequence of commutators $(\cC_n)_n$ where we
take away known parts using $(\cZ_n)_n$ and $(\cR_n)_n$ in
order to avoid problematic additional commutator terms. Starting
with $\cC_0=\cA$ find
\begin{equation*}
  [\cC_0,\cB] = \kappa \partial_x - v \kappa' \partial_v
  = w (\tilde{\kappa} \partial_x - v \partial_v)
  + (1-\gamma_\delta) (1-\kappa') v \partial_v
\end{equation*}
so that we take
\begin{align*}
  \cZ_1 &= w \\
  \cC_1 &= \tilde{\kappa} \partial_x - v \partial_v \\
  \cR_1 &= (1-\gamma_\delta) (1-\kappa') v \partial_v.
\end{align*}

In the next iteration we find
\begin{equation*}
  [\cC_1,\cB] = - \tilde{\kappa} \phi'' \partial_v - v \partial_x
  - v \tilde{\kappa}' \partial_x - \phi' \partial_v
  = (1+\tilde{\kappa}') (-v\partial_x - \phi' \partial_v)
  + (\tilde{\kappa}' \phi' - \tilde{\kappa} \phi'') \partial_v
\end{equation*}
so that we take
\begin{align*}
  \cZ_2 &= 1 + \tilde{\kappa}' \\
  \cC_2 &= -v \partial_x - \phi' \partial_v \\
  \cR_2 &= (\tilde{\kappa}' \phi' - \tilde{\kappa} \phi'')
          \partial_v.
\end{align*}

In the next iteration we find
\begin{equation*}
  [\cC_2,\cB] = 2 \phi'' (\tilde{\kappa} \partial_x - \cC_1) -
  2\phi' \partial_x = 2(\phi'' \tilde{\kappa} - \phi') \partial_x
  - 2 \phi'' \cC_1
\end{equation*}
so that we take
\begin{align*}
  \cZ_3 &= 2 \\
  \cC_3 &= (\phi'' \tilde{\kappa} - \phi') \partial_x \\
  \cR_3 &= 2 \phi'' \cC_1.
\end{align*}

In the last step we find
\begin{equation*}
  \cR_4 := [\cC_3,\cB] = (-\phi'' \tilde{\kappa} - \phi') \phi''
  \partial_v - v (\phi'' \tilde{\kappa} - \phi')' \partial_x
\end{equation*}
where we have taken $\cZ_4 = 1$ and $\cC_4 = 0$.

The operators $\cC_1,\cC_2,\cC_3$ give a good weighted control.
\begin{lemma}
  \label{thm:controlled-derivatives}
  There exists a constant $c$ such that
  \begin{equation*}
    \| (x^2+v^2) |\nabla h|^2 \|_{L^2(f_{\infty})}^2
    \le c  \left(
      \| \cC_1 h \|_{L^2(f_{\infty})}^2 + \| \cC_2 h
      \|_{L^2(f_{\infty})}^2 + \| \cC_3 h \|_{L^2(f_{\infty})}^2
    \right).
  \end{equation*}
\end{lemma}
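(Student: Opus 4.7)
The plan is to exploit the specific algebraic structure of $\cC_1,\cC_2,\cC_3$ in the harmonic case $\phi(x)=x^2/2$ so that a suitable combination inverts to recover $\partial_x$ and $\partial_v$ weighted exactly by $x^2+v^2$.

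First I would use $\phi'=x$ and $\phi''=1$ to rewrite
\[
\cC_1 = \tilde\kappa\,\partial_x - v\,\partial_v,\qquad
\cC_2 = -v\,\partial_x - x\,\partial_v,\qquad
\cC_3 = (\tilde\kappa - x)\,\partial_x,
\]
and record the \emph{key cancellation}
\[
\cC_1 - \cC_3 = x\,\partial_x - v\,\partial_v,
\]
in which the $\tilde\kappa$ dependence disappears completely. Coupling this with $\cC_2$ yields a $2\times 2$ linear system for $(\partial_x,\partial_v)$ whose matrix
\[
A = \begin{pmatrix} x & -v \\ -v & -x\end{pmatrix}
\]
has determinant exactly $-(x^2+v^2)$. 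Inverting $A$ then produces the pointwise operator identities
\begin{align*}
(x^2+v^2)\,\partial_x h &= x\,(\cC_1 h) - x\,(\cC_3 h) - v\,(\cC_2 h),\\
(x^2+v^2)\,\partial_v h &= -v\,(\cC_1 h) + v\,(\cC_3 h) - x\,(\cC_2 h),
\end{align*}
which are valid pointwise everywhere, including at the origin where both sides vanish.

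Next I would apply $(a+b+c)^2 \le 3(a^2+b^2+c^2)$ together with the trivial bounds $x^2,v^2 \le x^2+v^2$ to deduce the pointwise estimate
\[
(x^2+v^2)^2\,|\nabla h|^2 \le 6\,(x^2+v^2)\bigl(|\cC_1 h|^2 + |\cC_2 h|^2 + |\cC_3 h|^2\bigr).
\]
Dividing by $x^2+v^2$ where it is positive (and handling the origin trivially) and then integrating against $f_\infty$ yields the claimed inequality with the explicit constant $c=6$.

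There is no real analytical obstacle: the harmonic potential enforces $\phi''=1$, which is precisely what is required so that the $\tilde\kappa\,\partial_x$ contributions of $\cC_1$ and $\cC_3$ cancel, reducing the control of $|\nabla h|$ to the inversion of the rotation-dilation pair $(\cC_1-\cC_3,\cC_2)$. The apparent degeneracy of this inversion at $(x,v)=0$ is illusory, since the $x$ and $v$ prefactors on the right-hand sides vanish at exactly the same rate as the weight $x^2+v^2$ on the left.
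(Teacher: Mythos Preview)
Your argument is correct and is in fact cleaner than the paper's. The key observation that $\cC_1-\cC_3 = x\,\partial_x - v\,\partial_v$ eliminates $\tilde\kappa$ entirely is exact, and together with $\cC_2 = -v\,\partial_x - x\,\partial_v$ it gives a global pointwise inversion with determinant $-(x^2+v^2)$, yielding the bound with an explicit constant (in fact $c=3$ suffices; your $c=6$ is a harmless overcount). The paper instead splits into the regions $|x|\le\delta$ and $|x|>\delta$: near the origin it uses the algebraic identity
\[
(x^2+v^2)|\nabla h|^2 = |\cC_2 h|^2 + \tfrac{x}{\tilde\kappa}|\cC_1 h|^2 + (x^2-x\tilde\kappa)|\partial_x h|^2 + v^2\bigl(1-\tfrac{x}{\tilde\kappa}\bigr)|\partial_v h|^2
\]
and absorbs the last two terms using $\tilde\kappa\approx x$ for $\delta$ small, while for $|x|\ge\delta$ it first extracts $x^2|\partial_x h|^2$ from $\cC_3$ (using that $\tilde\kappa$ is bounded so $|\tilde\kappa-x|\gtrsim|x|$ there) and then peels off the remaining pieces from $\cC_1$ and $\cC_2$. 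Your route bypasses this case analysis and the attendant smallness choice of $\delta$; the paper's decomposition, on the other hand, keeps the roles of $\tilde\kappa$ visible and would adapt more directly to perturbations of the harmonic potential where $\phi''$ is not identically $1$ and the exact cancellation $\cC_1-\cC_3=x\partial_x-v\partial_v$ no longer holds.
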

\begin{proof}
  Split into the cases $|x| \le \delta$ and $|x| \ge \delta$. For
  $|x| \le \delta$ find by elementary algebra
  \begin{align*}
    &\int_{|x| \le \delta}\int_{v \in \R}
      (x^2 + v^2) |\nabla h|^2\, f_{\infty} \dd x \dd v \\
    &=
      \int_{|x| \le \delta}\int_{v \in \R}
      \left [
      |\cC_2 h|^2 + \frac{x}{\tilde\kappa} |\cC_1 h|^2
      + (x^2-x\tilde\kappa) |\partial_x h|^2 + v^2
      \left(1-\frac{x}{\tilde\kappa}\right) |\partial_v h|^2
      \right] \, f_{\infty} \dd x \dd v.
  \end{align*}
  The last two summands in the integral of the RHS can be
  absorbed into the LHS giving the claimed inequality.

  For $|x| \ge \delta$, first note that there exists a constant
  $c$ with
  \begin{equation*}
    \int_{|x| \ge \delta}\int_{v \in \R} |x^2|\; |\partial_x h|^2
    f_{\infty} \dd x \dd v
    \le c \int_{|x| \ge \delta}\int_{v \in \R} |\cC_3 h|^2
    f_{\infty} \dd x \dd v.
  \end{equation*}
  Hence we can find a constant $c$ such that
  \begin{equation*}
    \int_{|x| \ge \delta}\int_{v \in \R} |v^2| |\partial_v h|^2
    f_{\infty} \dd x \dd v \le c \int_{|x| \ge \delta}\int_{v \in
      \R} \left[|C_1 h|^2
      + |C_3 h|^2\right] f_{\infty} \dd x \dd v.
  \end{equation*}
  Finally with $C_2$ we can control the remaining terms
  $v^2 |\partial_x h|^2$ and $x^2 |\partial_v h|^2$ giving the
  claimed result.
\end{proof}

For the bounds we use a simple adaptation of the Poincaré inequality:
\begin{lemma}
  \label{thm:iterated-a-bound}
  There exists a constant $c$ such that
  \begin{equation*}
    \| \kappa^2 v \partial_v h \|_{L^2(f_\infty)}^2
    \le c \left[ \| \cA h \|_{L^2(f_\infty)}^2 + \| \cA \cA h \|_{L^2(f_\infty)}^2
    \right].
  \end{equation*}
\end{lemma}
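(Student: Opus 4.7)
The operator $\cA = \kappa(x)\partial_v$ depends on $x$ only through the multiplier $\kappa$, so $\cA h = \kappa \partial_v h$ and $\cA \cA h = \kappa^2 \partial_v^2 h$. The goal is therefore to establish
\[
  \int \kappa^4 v^2 (\partial_v h)^2\, f_\infty \dd x \dd v
  \lesssim \int \kappa^2 (\partial_v h)^2\, f_\infty \dd x \dd v
  + \int \kappa^4 (\partial_v^2 h)^2\, f_\infty \dd x \dd v,
\]
and since $\kappa$ is bounded, it suffices to prove the pointwise-in-$x$ one-dimensional Gaussian estimate
\[
  \int_{\R} v^2\, g(v)^2\, M(v)\dd v \lesssim \int_{\R} g(v)^2\, M(v)\dd v + \int_{\R} (\partial_v g)^2\, M(v)\dd v
\]
and then apply it with $g = \partial_v h(x,\cdot)$, with $x$ playing the role of a parameter.

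The one-dimensional estimate above is a classical consequence of integration by parts against the Gaussian $M(v) = (2\pi)^{-1/2} \ee^{-v^2/2}$, which satisfies $\partial_v M = -v M$. First I would write
\[
  \int v^2 g^2 M \dd v
  = -\int v\,g^2\, \partial_v M \dd v
  = \int g^2 M \dd v + 2\int v\,g\,(\partial_v g)\, M \dd v,
\]
and then absorb the cross term by Cauchy--Schwarz:
\[
  2\left|\int v\,g\,(\partial_v g) M \dd v\right|
  \le \tfrac12 \int v^2 g^2 M \dd v + 2 \int (\partial_v g)^2 M \dd v.
\]
This yields $\int v^2 g^2 M \dd v \le 2\int g^2 M \dd v + 4\int (\partial_v g)^2 M \dd v$.

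To conclude, I would apply this with $g(v) := \partial_v h(x,v)$, multiply both sides by $\kappa(x)^4 \ee^{-\phi(x)}$ and integrate in $x$. Using that $\kappa^4 = \kappa^2 \cdot \kappa^2 \le \|\kappa\|_\infty^2\,\kappa^2$, the right-hand side becomes $\le 2\|\kappa\|_\infty^2\,\|\cA h\|_{L^2(f_\infty)}^2 + 4\,\|\cA\cA h\|_{L^2(f_\infty)}^2$, while the left-hand side is exactly $\|\kappa^2 v\partial_v h\|_{L^2(f_\infty)}^2$. Since no obstacle of real substance arises — the only quantitative input is boundedness of $\kappa$, which is part of the standing assumptions on $\sigma = \kappa^2$ — this gives the claim with $c := \max(2\|\kappa\|_\infty^2,4)$.
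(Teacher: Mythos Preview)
Your proof is correct and follows essentially the same approach as the paper: both reduce to the one-dimensional Gaussian estimate $\int v^2 g^2 M \lesssim \int g^2 M + \int (\partial_v g)^2 M$ applied with $g = \partial_v h(x,\cdot)$, then integrate in $x$ with the appropriate power of $\kappa$. The paper states this estimate as a consequence of a ``refined Poincar\'e inequality'' while you derive it directly via integration by parts using $\partial_v M = -vM$; these are the same computation, and your treatment of the $\kappa$ weights is correct.
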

\begin{proof}
  Use the refined Poincaré inequality
  \begin{equation*}
    \int_{\R} v^2 |f|^2 \ee^{-v^2/2} \dd v
    \le c \left[
      \int_{\R} |\partial_v f|^2 \ee^{-v^2/2} \dd v
      + \left( \int_{\R} f(v)\, \ee^{-v^2/2} \dd v \right)^2
    \right]
  \end{equation*}
  for a constant $c$, which immediately implies
  \begin{equation*}
    \int_{\R} v^2 |f|^2 \ee^{-v^2/2} \dd v
    \le c \left[
      \int_{\R} |\partial_v f|^2 \ee^{-v^2/2} \dd v
      + \int_{\R} |f|^2 \ee^{-v^2/2} \dd v
    \right].
  \end{equation*}
  Applying this to $\partial_v h$ yields the claimed result.
\end{proof}

We now prove the relevant error bounds.
\begin{lemma}
  The operators $\cC_0,\dots,\cC_{N_C+1}$,
  $\cR_1,\dots,\cR_{N_C+1}$, $\cZ_1,\dots,\cZ_{N_C+1}$ satisfy
  the bounds \eqref{eq:ass-villani}.
\end{lemma}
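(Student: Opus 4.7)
The plan is to verify \eqref{eq:ass-villani} by computing each commutator and remainder $\cR_k$ explicitly, and then expressing the result as a bounded combination of the operators permitted on the right-hand side. The design feature that makes this possible is the weight $w = \gamma_\delta \kappa' + (1-\gamma_\delta)$ and the associated $\tilde\kappa = \kappa/w$: they are engineered precisely so that the leading-order singular contributions near $x=0$ (where $\cA = \kappa\partial_v$ degenerates) reassemble into allowed operators $\cC_j$ rather than producing uncontrollable first-order pieces.

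I would dispatch the trivial parts first: the bounds $\cZ_1 = w \in [1/2, 3/2]$, $\cZ_2 = 1 + \tilde\kappa' \ge 1/2$ (from $\tilde\kappa' \ge -1/2$), $\cZ_3 = 2$ and $\cZ_4 = 1$ are immediate; and $[\cA, \cA^*] = \kappa^2\,\mathrm{id}$ is a bounded multiplication, handling $k=0$. For each $k = 1, 2, 3$ a direct computation then gives, after the algebraic rewriting $-\kappa\partial_x = -w(\cC_1 + v\partial_v)$,
\begin{align*}
  [\cA, \cC_1] &= -(1 + \kappa'/w)\,\cA,\\
  [\cA, \cC_2] &= -w\,\cC_1 + v(\kappa' - w)\partial_v,\\
  [\cA, \cC_3] &= -\frac{(\phi''\tilde\kappa - \phi')\kappa'}{\kappa}\,\cA.
\end{align*}
The first is visibly a bounded multiple of $\cC_0$. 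The third is likewise controlled by $\cC_0$ once one verifies the arithmetic identity $\tilde\kappa - x = O(\kappa^2)$ near $x=0$ (using $\kappa(0)=0$, $\kappa'(0)=1$ and $w=\kappa'$ on $|x|\le\delta$), which renders the coefficient $(\phi''\tilde\kappa-\phi')\kappa'/\kappa$ bounded. In the second, the remainder coefficient $\kappa' - w = (1-\gamma_\delta)(\kappa' - 1)$ is supported on $|x|\ge\delta$, where $\kappa$ is bounded below; there $\|v(\kappa'-w)\partial_v h\|_{L^2(f_\infty)} \lesssim \|\kappa^2 v\partial_v h\|_{L^2(f_\infty)}$, which \cref{thm:iterated-a-bound} bounds by $\|\cA h\| + \|\cA^2 h\|$, i.e.\ in terms of $\cC_0$ and $\cC_0\cA$ as required.

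The adjoint commutators $[\cC_k, \cA^*]$ and the remainders $\cR_k$ are handled by the same two ingredients. Each splits into a differential part, reducible to an allowed $\cC_j$ by the algebraic manipulations above together with $\partial_v = \kappa^{-1}\cA$ on $\{|x|\ge\delta\}$, and a multiplicative part involving polynomials in $(x,v)$ weighted by $\kappa,\tilde\kappa,\kappa'$ and the derivatives of $\phi$. The multiplicative pieces are controlled in $L^2(f_\infty)$ by the Gaussian Poincar\'e inequality combined with \cref{thm:controlled-derivatives}, which supplies weighted control of $(x^2+v^2)|\nabla h|^2$ by the $\cC_j$. In particular, $\cR_3 = 2\phi''\cC_1$ is literally a bounded multiple of $\cC_1$ (since $\phi''=1$), $\cR_1$ is supported on $|x|\ge\delta$ with bounded coefficient, and in $\cR_2, \cR_4$ the coefficients either vanish to the correct order at $x=0$ or are supported away from it, making the analysis analogous to that for $[\cA,\cC_2]$.

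The main obstacle is precisely the degeneracy of $\cA$ at $x=0$: a generic commutator produces terms $\partial_x$ or $v\partial_v$ that cannot be dominated by $\cA = \kappa\partial_v$ alone on $\{|x|\le\delta\}$. The resolution is exactly the definition of $w$ and $\tilde\kappa$: either the offending term cancels to order $O(\kappa^2)$ as in $[\cA,\cC_3]$, or it is confined to $\{|x|\ge\delta\}$ by the cut-off factor $1-\gamma_\delta$ as in $[\cA,\cC_2]$ and $\cR_1$, where \cref{thm:iterated-a-bound} takes over. Carrying out this case-by-case verification systematically across all the commutators and remainders, with explicit tracking of constants, concludes the proof.
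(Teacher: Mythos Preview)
Your approach is the same as the paper's: compute each $[\cA,\cC_k]$, $[\cC_k,\cA^*]$ and $\cR_k$ explicitly and bound the pieces by the permitted operators, exploiting either the $O(\kappa^2)$ cancellation near $x=0$ or the localisation to $\{|x|\ge\delta\}$ via $1-\gamma_\delta$. Your computations for $[\cA,\cC_k]$ are correct, and your rewriting $-\kappa\partial_x = -w(\cC_1+v\partial_v)$ for $k=2$ is a clean variant of the paper's argument; the paper instead bounds both $\kappa\partial_x$ and $v\partial_v$ directly by $\cC_1,\cC_2$.

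There is one point to tighten. For the multiplicative part of $[\cC_2,\cA^*]$, namely $-v^2\kappa'-x\kappa$, you propose to invoke \cref{thm:controlled-derivatives}. But that lemma uses $\cC_3$, whereas the Villani condition for $k=2$ only permits $\mathrm{id},\cC_0,\cC_1,\cC_2$. The $v^2\kappa'$ term carries no $\kappa$ factor (since $\kappa'(0)=1$), so the Gaussian Poincar\'e alone is insufficient: it produces $\|\partial_v h\|$, which is not dominated by $\cA=\kappa\partial_v$ near $x=0$. The paper closes this by the pointwise estimate
\[
\|\kappa\partial_x h\|_{L^2(f_\infty)}^2 + \|v\partial_v h\|_{L^2(f_\infty)}^2 \lesssim \|\cC_1 h\|_{L^2(f_\infty)}^2 + \|\cC_2 h\|_{L^2(f_\infty)}^2,
\]
which follows from expanding the right-hand side and using $x\tilde\kappa\ge 0$ (a consequence of $\kappa(0)=0$, $\kappa'(0)=1$, and $\kappa$ nonvanishing away from $0$). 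With $\|v\partial_v h\|$ thus controlled by $\cC_1,\cC_2$, the refined Poincar\'e handles $\|v^2 h\|$. For $k=1$ and $k=3$ the multiplicative pieces do carry a factor bounded by $\kappa$ (respectively $\kappa(\kappa'-w)/w$ and $(\tilde\kappa-x)\kappa'=O(\kappa^2)$), so there the Gaussian Poincar\'e plus $\cA$ suffices and \cref{thm:controlled-derivatives} is not needed either.
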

\begin{proof}
  For the commutator $[\cA,\cC_k]$ and $[\cC_k,\cA^*]$, note that
  $ \cA^* = \kappa v - \cA $ and we find
  \begin{align*}
    [\cA,\cC_0] &= 0 & [\cC_0,\cA^*] &= \kappa^2 \\
    [\cA,\cC_1] &= -\kappa (1+\kappa') \partial_v
                   &  [\cC_1,\cA^*] &= [\cA,\cC_1] + \kappa (\kappa' -1) v \\
    [\cA,\cC_2] &= -\kappa \partial_x - v\kappa' \partial_v
                   &  [\cC_2,\cA^*] &= [\cA,\cC_2] - v^2 \kappa - \kappa x \\
    [\cA,\cC_3] &= (\kappa-x) \kappa' \partial_v
                   &  [\cC_3,\cA^*] &= [\cA,\cC_3] - (\kappa-x) \kappa' v
  \end{align*}
  Here are $[\cA,\cC_1]$ and $[\cA,\cC_3]$ relatively bounded to
  $\cC_0=\cA$. For $\cC_2$ note that
  \begin{equation*}
    \| \kappa \partial_x h\|_{L^2(f_\infty)}^2 + \| v \partial_v h \|_{L^2(f_\infty)}^2
    \le \|\cC_1 h\|_{L^2(f_\infty)}^2 + \| \cC_2 h \|_{L^2(f_\infty)}^2.
  \end{equation*}
  Therefore, $[\cA,\cC_2]$ is bounded relative to $\cC_1$ and $\cC_2$.

  Now consider the additional terms in the commutator with $\cA^*$. In
  $[\cC_0,\cA^*]$ the additional term is bounded relative to the
  identity.  As in \cref{thm:iterated-a-bound}, we find from the
  refined Poincaré inequality that the additional term in
  $[\cC_1,\cA^*]$ and $[\cC_3,\cA^*]$ is bounded relative to
  $\mathrm{id}$ and $\cC_0=\cA$. For the additional term in
  $[\cC_2,\cA^*]$ use the refined Poincaré inequality with
  $|v \partial_v h|^2$ which as before is controlled. Hence the
  additional term is bounded relative to $\mathrm{id}$, $\cC_1$ and
  $\cC_2$.

  This proves the assumptions (1) and (2).

  For the bound on the error terms note that $\cR_1$ is bounded
  relative to $\cC_0$ and $\cC_0\cA$. The remainder $\cR_2$ is bounded
  relative to $\cC_0$ and $\cR_3$ is bounded relative to $\cC_1$. For
  $\cR_4$ use \cref{thm:controlled-derivatives} to show that it is
  bounded relative to $\cC_1$, $\cC_2$ and $\cC_3$.

  Finally, the explicit form of the factors $\cZ_1,\dots \cZ_4$ shows
  the required control (4).
\end{proof}

The last remaining part is a weighted Poincaré ineqality.
\begin{lemma}
  There exists a constant $c$ such that
  \begin{equation*}
    \int_{\R} \int_{\R} |h|^2 \, f_\infty \dd x \dd v
    \le c \int_{\R} \int_{\R} (x^2+v^2) |\nabla h|^2 \, f_\infty \dd x \dd v
  \end{equation*}
  for all $h$ with
  \begin{equation*}
    \int_{\R} \int_{\R} h(x)\, f_\infty \dd x \dd v = 0.
  \end{equation*}
\end{lemma}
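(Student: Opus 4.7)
The plan is to prove the weighted Poincaré inequality by exploiting the rotational symmetry shared by $f_\infty$ and by the weight $x^2+v^2=r^2$. In polar coordinates $(r,\theta)$, I would split $h(r,\theta) = \bar h(r) + h^\perp(r,\theta)$ into its angular mean $\bar h(r) = \frac{1}{2\pi}\int_0^{2\pi} h \dd\theta$ and its orthogonal complement $h^\perp$; these two pieces are orthogonal in $L^2(f_\infty)$, so I can treat them separately.

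The angular part is easy: since $h^\perp$ has zero angular mean on each circle, the Poincaré inequality on $S^1$ gives $\int_0^{2\pi} |h^\perp|^2 \dd\theta \le \int_0^{2\pi} |\partial_\theta h|^2 \dd\theta$, and the pointwise identity $\partial_\theta h = -v\, \partial_x h + x\, \partial_v h$ combined with Cauchy--Schwarz yields $|\partial_\theta h|^2 \le (x^2+v^2)|\nabla h|^2$. Integrating against $(2\pi)^{-1} r \ee^{-r^2/2}\dd r\dd\theta$ gives
\begin{equation*}
  \int |h^\perp|^2 f_\infty \dd x \dd v \lesssim \int (x^2+v^2)|\nabla h|^2 f_\infty \dd x \dd v.
\end{equation*}

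For the radial part $\bar h(r)$, the global zero-mean hypothesis becomes $\int_0^\infty \bar h(r) \, r \ee^{-r^2/2} \dd r = 0$, and Cauchy--Schwarz applied to $\partial_r \bar h = \frac{1}{2\pi}\int \partial_r h \dd\theta$ gives $\int_0^\infty r^3 |\bar h'|^2 \ee^{-r^2/2}\dd r \le \int (x^2+v^2)|\nabla h|^2 f_\infty \dd x \dd v$. Performing the change of variable $T = r^2/2$ (so that $r\ee^{-r^2/2}\dd r = \ee^{-T}\dd T$ and $r^3 |\bar h'|^2\dd r = 4T^2 |g'|^2 \dd T$ with $g(T) := \bar h(\sqrt{2T})$), the problem reduces to the one-dimensional weighted Poincaré inequality
\begin{equation*}
  \int_0^\infty g^2 \ee^{-T} \dd T \lesssim \int_0^\infty T^2 |g'|^2 \ee^{-T} \dd T
  \quad \text{for $g$ with } \int_0^\infty g\, \ee^{-T} \dd T = 0.
\end{equation*}

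The hard part is this last one-dimensional inequality, because the Dirichlet weight $T^2$ degenerates precisely at $T=0$ where the exponential measure concentrates its mass. I would establish it via the classical Muckenhoupt criterion for weighted Poincaré inequalities on $(0,\infty)$: with median $m = 1$, one verifies the finiteness of
\begin{equation*}
  B_- = \sup_{0<x<1}\Bigl(\int_0^x \ee^{-T}\dd T\Bigr)\Bigl(\int_x^1 T^{-2}\ee^T \dd T\Bigr),
  \quad
  B_+ = \sup_{x>1}\Bigl(\int_x^\infty \ee^{-T}\dd T\Bigr)\Bigl(\int_1^x T^{-2}\ee^T\dd T\Bigr).
\end{equation*}
Near $T=0$ the apparent blow-up $\int_x^1 T^{-2}\dd T \sim 1/x$ is balanced exactly by $\int_0^x \ee^{-T}\dd T \sim x$, so the product stays bounded; at $T=\infty$ the asymptotic $\int_1^x T^{-2}\ee^T \dd T \sim \ee^x/x^2$ is beaten by the tail $\int_x^\infty \ee^{-T}\dd T = \ee^{-x}$, producing a product of order $1/x^2 \to 0$. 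This delicate cancellation at the origin is the true difficulty; once it is confirmed, Muckenhoupt's theorem yields the desired Poincaré inequality with an explicit constant, completing the proof.
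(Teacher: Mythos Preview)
Your proof is correct and takes a genuinely different route from the paper's. You exploit the exact rotational symmetry of $f_\infty$ and the weight: decompose into angular mean plus fluctuation, handle the fluctuation by the Poincar\'e inequality on $S^1$ together with $|\partial_\theta h|^2 \le r^2|\nabla h|^2$, and reduce the radial part via $T=r^2/2$ to a one-dimensional weighted Poincar\'e inequality against $\ee^{-T}\dd T$, which you then verify by Muckenhoupt's criterion. The borderline check at $T\to 0$ (product tending to a nonzero constant) is exactly right, and boundedness is all Muckenhoupt requires.

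The paper instead follows the template of \cite[Theorem~A.1]{villani-2009-hypocoercivity}: it introduces the modified potential $V=\tfrac{x^2+v^2}{2}-\log(x^2+v^2)$ and expands $\int |\nabla(h\,\ee^{-V/2})|^2$ to obtain the Bakry--\'Emery-type estimate
\[
  \int |h|^2\bigl(4-6(x^2{+}v^2)+(x^2{+}v^2)^2\bigr)\,\ee^{-(x^2+v^2)/2}
  \le 4\int (x^2{+}v^2)|\nabla h|^2\,\ee^{-(x^2+v^2)/2},
\]
which controls the regions $r\to 0$ and $r\to\infty$; it then patches the intermediate annulus with the standard Poincar\'e inequality on a bounded domain. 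The paper also mentions a third route via Hermite expansion. Your approach is more elementary and gives a cleaner explicit constant, but it leans on exact radial symmetry; the paper's potential-modification argument is less sharp but more robust, in that it would survive perturbations of the harmonic potential and does not require separating variables.
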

The proof of the remaining Poincaré inequality can be done
explicitly using the expansion into Hermite polynomials, see
\cite{dietert-thesis-2017}. However, it can also be shown by
adapting Theorem A.1 of \cite{villani-2009-hypocoercivity}:
\begin{proof}
  Define the measure $\nu$ by
  \begin{equation*}
    \dd \nu(x,v) = \ee^{-V(x,v)} \dd x \dd v
  \end{equation*}
  with
  \begin{equation*}
    V(x,v) = \frac{x^2}{2} + \frac{v^2}{2} - \log(x^2+v^2).
  \end{equation*}
  By expanding $0 \le \int \nabla (h \ee^{-V}) \dd x\dd v$, we
  arrive at
  \begin{equation*}
    \int_{\R\times \R} |h|^2
    (|\nabla V|^2 - 2 \Delta V) \dd x \dd v
    \le 4 \int_{\R\times\R} |\nabla h|^2 \dd x \dd v.
  \end{equation*}
  In our case
  \begin{equation}
    \label{eq:counter:poin:1}
    \int_{\R \times \R}
    |h|^2 (4 - 6(x^2+v^2) + (x^2+v^2)^2)
    \ee^{-x^2/2-v^2/2} \dd x \dd v
    \le 4 \int_{\R \times \R} (x^2+v^2) |\nabla h|^2
    \ee^{-x^2/2-v^2/2} \dd x \dd v,
  \end{equation}
  which shows the required inequality for $(x,v) \to 0$ and
  $(x,v) \to \infty$.

  Following the proof in \cite{villani-2009-hypocoercivity}, this
  shows the claimed inequality together with the standard
  Poincaré inequality on bounded domains. This inequality ensures
  that for every $r>0$, there exists a constant $c(r)$ such that
  \begin{align*}
    \int_{r^{-1} \le \| (x,v) \| \le r}
    |h|^2 \mathcal{Z}^{-1}(r) \ee^{-x^2/2-v^2/2} \dd x \dd v
    &\le c(r) \int_{r^{-1} \le \| (x,v) \| \le r}
      |\nabla h|^2 \mathcal{Z}^{-1}(r) \ee^{-x^2/2-v^2/2} \dd x \dd v\\
    &+ \left[
      \int_{r^{-1} \le \| (x,v) \| \le r}
      h\, \mathcal{Z}^{-1}(r) \ee^{-x^2/2-v^2/2} \dd x \dd v
      \right]^2
  \end{align*}
  where
  \begin{equation*}
    \mathcal{Z}(r) = \int_{r^{-1} \le \| (x,v) \| \le r}
    \ee^{-x^2/2-v^2/2} \dd x \dd v.
  \end{equation*}
  As $\mu(h) = 0$ we find
  \begin{align*}
    \left[
    \int_{r^{-1} \le \| (x,v) \| \le r}
    h\, \mathcal{Z}^{-1}(r) \ee^{-x^2/2-v^2/2} \dd x \dd v
    \right]^2
    &= \mathcal{Z}^{-2}(r)
      \left[
      \int_{\| (x,v) \| \not\in [r^{-1},r]}
      h\,  \ee^{-x^2/2-v^2/2} \dd x \dd v
      \right]^2\\
    &\le \epsilon(r)
      \int_{r^{-1} \le \| (x,v) \| \le r}
      |h|^2 \ee^{-x^2/2-v^2/2} \dd x \dd v
  \end{align*}
  where
  \begin{equation*}
    \epsilon(r) = \mathcal{Z}^{-2}(r)
    \int_{r^{-1} \le \| (x,v) \| \le r}
    \ee^{-x^2/2-v^2/2} \dd x \dd v.
  \end{equation*}
  As $r \to \infty$, we have that $\epsilon(r) \to 0$, so that
  for large enough $r$ we can combine this inequality with
  \eqref{eq:counter:poin:1} to find the result.
\end{proof}

Hence with this commutator setup we find exponential convergence
to equilibrium.

\section{Traces for the considered solutions}
\label{sec:traces}

We adapt the techniques in~\cite{mischler-2000-trace} to conclude the
following lemma (the result in~\cite{mischler-2000-trace} only applies
to bounded collision operators in $L^1$):
\begin{proposition}[Existence of the trace]
  Consider $f \in L^\infty([0,+\infty);L^2(\Omega \times \sV, \dd\mu)$
  so that
  $\partial_t f + \cT f \in L^\infty([0,+\infty);L^2(\Omega ; \mathcal
  H(\sV, \langle \cdot \rangle^{-k} \dd\mu))$ for some $k \in \N$,
  then the traces in time and at $\partial \Omega$ exist in
  $L^2(L^2(\partial \Omega; \mathcal H(\sV, (n \cdot v)^2 \langle
  \cdot \rangle^{-k} \dd\mu))$, where $\mathcal H=L^2$ for the bounded
  linear Boltzmann operator and $\mathcal H=H^{-1}$ for the
  Fokker-Planck operator, satisfy the Green formula, and can be
  renormalised by any $\beta \in W^{1,\infty}(\R_+)$.
\end{proposition}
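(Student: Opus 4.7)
The proof proceeds by adapting the framework of~\cite{mischler-2000-trace} to the weaker regularity on the source term that appears in the Fokker-Planck setting. The plan is to first produce a smooth approximation $f_\varepsilon$ of $f$ (by velocity mollification combined with time regularization) for which the traces exist classically, then derive quantitative trace estimates via the Green formula, and finally pass to the limit. The polynomial weight $\langle v \rangle^{-k}$ is natural since the transport $\cT = v \cdot \nabla_x - \nabla_x \phi \cdot \nabla_v$ grows linearly in $v$, so that $\partial_t f + \cT f$ inherits at most a polynomial growth relative to $f$.

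The key step is the standard Green identity. Writing $g = \partial_t f + \cT f$ and multiplying by $2 f \langle v \rangle^{-k} f_\infty^{-1}$, integration by parts gives
\begin{equation*}
  \int_0^T \int_{\Gamma_+} (\gamma_+ f_\varepsilon)^2 \langle v \rangle^{-k} \dd\nu \dd t
  - \int_0^T \int_{\Gamma_-} (\tilde\gamma_- f_\varepsilon)^2 \langle v \rangle^{-k} \dd\nu \dd t
  = \| f_\varepsilon(0) \|^2_{L^2(\mu_k)} - \| f_\varepsilon(T) \|^2_{L^2(\mu_k)}
  + 2 \int_0^T \langle g_\varepsilon, f_\varepsilon \rangle\, \dd t,
\end{equation*}
with $\dd\mu_k := \langle v \rangle^{-k} \dd\mu$. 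In the bounded linear Boltzmann case $g \in L^2$ and the duality pairing on the right is a genuine integral that passes to the limit under mollification, giving the trace in $L^2(\Gamma_\pm; (n\cdot v)\dd\nu)$. Localizing by $(n \cdot v) \varphi(x,v)$ with $\varphi \in L^\infty$ and carefully re-applying the identity on the truncated problem then upgrades the weight to $(n\cdot v)^2$, which is what is needed to absorb grazing characteristics.

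For the Fokker-Planck case, $g$ only lies in $H^{-1}(\sV)$, so the pairing $\langle g_\varepsilon, f_\varepsilon \rangle$ must be interpreted as the $H^{-1}_v$--$H^1_v$ duality. This requires establishing the \emph{a priori} bound $\nabla_v f \in L^2_{\mathrm{loc}}(\dd t \dd x ; L^2(\sV))$, which is obtained by exploiting the Hörmander factorization $\cL = -\cA^* \cA$ with $\cA = \nabla_v + v$: locally in the region where the dissipation can be tested, this furnishes the missing velocity-regularity. The bound on the traces then holds in $L^2(\partial\Omega; H^{-1}(\sV, (n\cdot v)^2 \dd\mu_k))$, defined by duality against $H^1_v$ test functions extended suitably to $\Omega$.

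The main obstacle is precisely this $H^{-1}_v$ case: one must check that the duality pairing is stable under the velocity mollification (the mollifier commutes with $\cA$ up to bounded commutators) and that the limit object identifies with a genuine trace, not merely a distribution on $\partial \Omega$. Once this is established, the Green formula passes to the limit, and renormalization by $\beta \in W^{1,\infty}(\R_+)$ is handled by applying the chain rule to $\beta(f_\varepsilon)$, noting that $\beta(f_\varepsilon)$ satisfies a transport equation with source $\beta'(f_\varepsilon) g_\varepsilon$, which remains in the required space thanks to $\| \beta' \|_\infty < \infty$. The same bound and density argument then yield the trace and Green identity for $\beta(f)$.
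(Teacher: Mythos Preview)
Your approach is in the same spirit as the paper's: the paper does not give a detailed proof of this proposition but simply states that it follows by adapting the techniques of~\cite{mischler-2000-trace}, lists references for related trace results for transport semigroups, and then discusses how the traces are applied to the specific trilinear boundary term $\int \cT[f_s^2\, \cG_{t-s}(\psi_t+\tilde\psi_t)]\,\dd\mu$ arising in the main argument (and there only for bounded $\cL$). Your mollification-plus-Green-formula outline is precisely the Mischler strategy, and your identification of the $H^{-1}_v$--$H^1_v$ duality as the new obstacle in the Fokker-Planck case is accurate.

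Two technical points in your sketch deserve more care. First, the weight $\langle v\rangle^{-k}$ is not invariant under $\cT$ (the term $\nabla_x\phi\cdot\nabla_v$ hits it), so your Green identity acquires a commutator term; it is lower order and absorbed by adjusting $k$, but it should be tracked. Second, in the renormalization step for the Fokker-Planck case, the product $\beta'(f)\,g$ with $g\in H^{-1}_v$ and $\beta'(f)\in L^\infty$ is not automatically in $H^{-1}_v$: one also needs control on $\nabla_v(\beta'(f)) = \beta''(f)\,\nabla_v f$, which requires either $\beta\in W^{2,\infty}$ or again the a priori bound $\nabla_v f\in L^2$ you already invoke from the dissipation. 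The paper in fact does not claim a fully rigorous treatment of the unbounded case either, remarking just before \cref{theo:main} that when $\cL$ is Fokker-Planck and the full semigroup is used for the control condition, the a priori estimates should be justified by approximating $\cL$ by bounded operators so that all trilinear boundary terms are well-defined.
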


For the transport semigroup in $L^1$ we can construct a semigroup
which contracts the mass (in general the mass can escape to infinity),
see
\cite{voigt-1981-boundary-value,lods-2005-semigroup,arlotti-lods-2005-substochastic,mokhtar-kharroubi-2008-transport-semigroups,arlotti-banasiak-lods-2011-transport}. These
works also include trace theorems with Green's formula. The oldest
reference for the Green's formula seems to be
\cite{cessenat-1984-theoremes,cessenat-1985-theoremes} but they can
also be found in
\cite{mischler-2000-trace,mischler-2010-kinetic-maxwell}. For
completeness
\cite{arlotti-lods-2019-lp-transport,arlotti-lods-2019-lp-transport-2}
constructs the transport semigroup in $L^p$.

The key difficulty with traces in the proof of
\cref{sec:following-trajectories} is dealing with the term
\begin{equation*}
  \int_{\Omega \times \sV}
  \cT \Big[ f_s^2  \cG_{t-s} (\psi_t{+}\tilde \psi_t)
  \Big] \dd \mu.
\end{equation*}
Assume that $\cL$ is bounded. Then first note that then
$f \in C(0,\infty,L^2)$ and $(\partial_t + \cT)f \in L^2$ and its
trace exists in $L^2$; moreover if $g \in C(0,\infty,L^\infty)$ and
$(\partial_t + \cT)g \in L^\infty$ its trace exists in
$L^\infty$. Therefore we have traces for $f$, $f^2$ and
$f^2 \cG_{t-s} (\psi_t{+}\tilde \psi_t)$ by the product rule for the
weak derivative in the interior, since
$g:=\cG_{t-s} (\psi_t{+}\tilde \psi_t)$ satisfies the $L^\infty$ a
priori bounds. By weak-strong convergence of
approximations we also see that the trace commutes, i.e.
$\gamma f^2 = (\gamma f)^2$ and likewise with $\psi$. During the trace
we loose a weight (related to the return time and typically like
$|v|^{-1}$) and we need enough integrability at infinity to make sure
that $f$ preserves mass and the boundary operator makes sense.

\section{Non-uniform control conditions}
\label{sec:non-unif}

We can treat \emph{non-uniform} geometric control
conditions, with less quantitative estimates and additional
non-concentration and tightness assumptions on the solution. For
simplicity we formulate as Case 2 of \hypref{h:simple-tcc} which we
replace by the following condition.

\sethypothesistag{\ref*{h:tcc}''}
\begin{hypothesis}[Non-uniform Geometry Control Condition]
  \phantomsection
  \label{h:nugcc}
  Assume \(\cL\) satisfies \eqref{eq:gamma-2-condition-l} and that
  there is a connected domain $\Sigma \subset \Omega$ with the
  following properties: (i)~$(\Sigma,\phi)$ is $\epsilon$-regular for
  some $\epsilon >0$, (ii) $\inf_{x \in \Sigma} \sigma(x) > 0$, and
  (iii) there is a non-negative $\chi \in L^\infty(\Omega)$ with
  $\supp \chi \subset \Sigma$ so that (with
  $\cG_t \varphi := f_\infty^{-1} {\sfull_t}^*( \varphi f_\infty)$)
  \begin{align}
    \label{eq:nugcc}
     \forall \, (x,v) \in \Omega \times \sV, \quad \exists
     \, T=T(x,v) >0 \ \text{ such that } \
    \int_0 ^{T(x,v)} \cG_t \chi (x,v) \dd t >0.
  \end{align}
  Moreover we assume there is a
  Banach space $\mathcal B$ dense in $L^2(\Omega \times \sV, \dd \mu)$
  so that $\| f_t \|_{\mathcal B} \lesssim \| f_\init \|_{\mathcal B}$
  uniformly in time, and the norm of $\mathcal B$ provides
  non-concentration and tightness on the solution: that there is
  $a:\R_+^* \to \R_+^*$ going to zero at zero and infinity so that for
  any $B \subset \Omega \times \sV$ Borel set and $M>0$:
  \begin{equation}
    \label{eq:higher}
    \int_{\Omega \times \sV} \indicator_B f^2 \dd
    \mu \le a(|B|) \| f \|_{\mathcal H}^2 \quad
    \text{ and } \quad
    \int_{\Omega \times \sV} \left( \indicator_{|x|
        \ge M} + \indicator_{|v| \ge M} \right) f^2 \dd
    \mu  \le a(M) \| f \|_{\mathcal H}^2.
  \end{equation}
\end{hypothesis}

\begin{theorem}[Non-uniform control condition]
  \label{theo:numain}
  Assume \hypref{h:geom}-\hypref{h:eq}-\hypref{h:local}-\hypref{h:bdd}-\hypref{h:coe}-\hypref{h:nugcc}. Then given any
  $f_{\init} \in L^2(\Omega \times \sV, \dd \mu)$, any
  $f=f(t,x,v) \in C^0([0,+\infty);L^2(\Omega \times \sV, \dd \mu))$
  admitting traces
  $\gamma f \in L^2([0,+\infty) \times \Omega; H^{-1}(\sV, (n \cdot
  v)^2 \dd \mu))$ and solution to~\eqref{eq:gen} satisfies
  \begin{align}
    \label{eq:nurelax}
    \left\| f_t - \left( \int_{\Omega \times \sV}
    f_{\init}\right) f_\infty  \right\|_{L^2(\Omega \times \sV,
    \dd \mu)} \xrightarrow[t \to \infty]{} 0.
  \end{align}
\end{theorem}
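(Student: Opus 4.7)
The plan is to argue by contradiction using the tightness and non-concentration supplied by $\mathcal B$ to extract an accumulation point of the trajectory, and then to identify it as zero through the vanishing of the dissipation combined with the pointwise control \eqref{eq:nugcc}. First I normalise by subtracting the equilibrium mass: setting $\tilde f := f - (\int f_\init)\, f_\infty$, one has $\int \tilde f\,\dd x\,\dd v = 0$, and it suffices to prove $\|\tilde f_t\|_{L^2(\mu)} \to 0$. The $L^2(\mu)$-contractivity of \hypref{h:bdd} together with the density of $\mathcal B$ in $L^2(\mu)$ reduces matters to $\tilde f_\init \in \mathcal B$; then the uniform bound $\|\tilde f_t\|_{\mathcal B} \lesssim \|\tilde f_\init\|_{\mathcal B}$ and \eqref{eq:higher} yield time-uniform tightness of $\tilde f_t^2$ in $(x,v)$ and $L^1(\mu)$-equi-integrability.

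Since $t \mapsto \|\tilde f_t\|_{L^2(\mu)}$ is non-increasing it admits a limit $\ell \ge 0$, and I assume for contradiction $\ell > 0$. Using the tightness and equi-integrability together with the regularity input from $\mathcal B$ (through a compact embedding into $L^2_{\mathrm{loc}}(\mu)$, hence a.e.\ convergence up to a subsequence by Rellich, followed by strong $L^2(\mu)$ convergence by Vitali), I extract $t_n \to \infty$ with $\tilde f_{t_n} \to g$ strongly in $L^2(\mu)$, $\|g\|_{L^2(\mu)} = \ell$, and $\int g\,\dd x\,\dd v = 0$ (the latter obtained by pairing with $f_\infty \in L^2(\mu)$). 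Let $g_t$ denote the solution of \eqref{eq:gen} with datum $g$: contractivity forces $\|g_t\|_{L^2(\mu)} = \ell$ for every $t \ge 0$, hence the dissipation $\cD(g_t)$ from \eqref{eq:dissipation} vanishes identically in $t$.

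The vanishing of the bulk term $\int \sigma g_t \cL g_t\,\dd\mu$ combined with the spectral gap in \hypref{h:local} forces $g_t(x,v) = \rho_t(x) M(v)$ on $\{\sigma > 0\} \supset \Sigma$; in particular $\cL g_t = 0$ there, so $g_t$ actually satisfies the free transport equation $\partial_t g_t + \cT g_t = 0$ on the whole domain, with the boundary reflection of \hypref{h:bdd} saturated. Substituting $g_t = \rho_t M$ into the transport equation on $\Sigma$ and taking moments in $v$ yields $\partial_t \rho_t = 0$ and $\nabla_x \rho_t + \rho_t \nabla_x \phi = 0$, so the Poincaré inequality \hypref{h:coe} on the connected set $\Sigma$ gives $\rho_t \equiv c\, \ee^{-\phi}$ for some constant $c$, i.e.\ $g_t = c f_\infty$ on $\Sigma \times \sV$. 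To extend this identity to all of $\Omega \times \sV$, I invoke \eqref{eq:nugcc}: since $f_\infty$ is transport-invariant and $g_t$ is constant along the characteristics of $\cT$ (with the boundary operator $\cR$), pairing $g_t$ against $\hat\cS_t^*\chi$ and using the pointwise positivity of $\int_0^{T(x_0,v_0)} \hat\cS_t^*\chi(x_0,v_0)\,\dd t$ at each $(x_0,v_0)$ together with the identification $g_t = c f_\infty$ on $\supp \chi \subset \Sigma$ propagates $g(x_0,v_0) = c\, f_\infty(x_0,v_0)$ to every phase-space point. The zero-mass constraint then forces $c = 0$, hence $g \equiv 0$, contradicting $\|g\|_{L^2(\mu)} = \ell > 0$.

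The main technical obstacle is the compactness step: mere tightness and equi-integrability of $\tilde f_t^2$ from \eqref{eq:higher} are insufficient for strong $L^2(\mu)$ convergence, since they yield weak $L^1$ compactness of $\tilde f_t^2$ but not the absence of a defect measure; the argument therefore relies on $\mathcal B$ providing a form of regularity (typically a compact embedding into $L^2_{\mathrm{loc}}(\mu)$, e.g.\ a Sobolev-type space or one in which averaging lemmas apply). A secondary subtlety is the passage to the limit in the trace terms when $\cL$ is unbounded (as for Fokker-Planck), which is handled along the lines of \cref{sec:traces} combined with weak-strong lower semicontinuity of the dissipation functional $\cD$.
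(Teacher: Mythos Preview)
Your argument has a genuine gap at the compactness step, which you yourself flag: hypothesis \hypref{h:nugcc} supplies only tightness and non-concentration via \eqref{eq:higher}, not a compact embedding of $\mathcal B$ into $L^2_{\mathrm{loc}}(\mu)$. Without that embedding you cannot extract a \emph{strong} $L^2(\mu)$ limit $g$ with $\|g\|_{L^2(\mu)} = \ell > 0$, and your contradiction argument never launches. Adding such an embedding would be an extra assumption not present in the statement.

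The paper circumvents this by extracting only a \emph{weak} limit $\tilde g$ in $L^2_{t,\mathrm{loc}}L^2_{x,v}(\mu)$ of the normalised translates $g_k(t) := f(t_k+t)/\|f(t_k)\|$; your identification steps then give $\tilde g = 0$, but weak convergence to zero is of course not yet a contradiction. The contradiction is obtained instead by feeding $g_k$ into the quantitative trajectorial estimates of \cref{sec:abstract} (the chain \eqref{eq:following}--\eqref{eq:split-local}--\eqref{eq:split-global}--\eqref{eq:am}), which yield an inequality of the form
\[
  \int_0^T \int_{\Omega \times \sV} g_k(0,x,v)^2\, \hat\cS_t^*\psi_t(x,v) \,\dd\mu\,\dd t
  \;\lesssim\; \gAvg{g_k}_\psi + \left|\int_0^T\!\!\int_{\Omega\times\sV} \sigma g_k \cL g_k\,\dd\mu\,\dd t\right|.
\]
The right-hand side vanishes as $k\to\infty$: the global average by weak convergence to zero, the dissipation term because the total dissipation along the original trajectory is finite. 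For the left-hand side, \eqref{eq:nugcc} together with \eqref{eq:higher} let one choose $T$ large and a small bad set $B$ so that $\int_0^T \hat\cS_t^*\psi_t\,\dd t \ge 1$ off $B$ while $\int_B g_k(0)^2\,\dd\mu \le \tfrac12$; this forces $\|g_k(0)\|_{L^2(\mu)}^2 < 1$ for large $k$, the sought contradiction. Thus the role of \eqref{eq:higher} in the paper is not to upgrade weak convergence to strong, but to control the residual mass on the set where the trajectorial weight may be small; and the proof genuinely relies on the main machinery of the paper rather than a classical ``extract a limit and identify it'' compactness argument.
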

This implies the convergence to equilibrium without rate for the
concrete equations treated in \cref{cor:LB} when~\hypref{h:tcc} is
replaced by~\hypref{h:nugcc}.

A similar result was already obtained in
\cite{han-kwan-leautaud-2015-geometric} for the linear Boltzmann
equation and we adapt here their idea in our setting. It is
enough to prove the relaxation to equilibrium for initial data
$f_{\init} \in L^2(\mu) \cap \mathcal B$ by density of
$\mathcal B$ in $L^2(\mu)$. Assume by contradiction that there is
$f_{\init} \in L^2(\mu) \cap \mathcal B$ with zero global average
so that the solution does \emph{not} converge to zero in
$L^2(\mu)$. There is therefore $t_k \to \infty$ with
$t_{k+1}-t_k \to \infty$ as $k \to \infty$ so that
$\| f(t_k) \|_{L^2(\mu)}$ is uniformly bounded below in
$k \ge 1$. The sequence
$g_k(t) := f(t_k+t) \| f(t_k) \|_{L^2(\mu)}^{-1}$ then satisfies
$g_k \in L^2(\mu) \cap \mathcal B$ with
\begin{align*}
  \forall \, k \ge 1, \quad \forall \, t \ge 0, \qquad
  \| g_k(0) \|_{L^2(\mu)}=1,
  \quad
  \| g_k(t) \|_{\mathcal B} \lesssim 1,
\end{align*}
and for any fixed $T>0$,
\begin{align*}
  \left| \int_0 ^T \int_{\Omega \times \sV} \sigma g_k
  \cL g_k \dd \mu \dd t \right| \xrightarrow[k \to \infty]{} 0.
\end{align*}
By weak compactness of the unit ball in $L^2_{t,loc}L^2_{x,v}(\mu)$ we
can also\ assume that $(g_k)$ has a weak limit in this space. Such a
weak limit $\tilde g$ satisfies the transport equation without
collision operator and with $\tilde g = \langle \tilde g \rangle M$ on
$\text{supp } \psi$. Since~\eqref{eq:nugcc} implies that all points
can be connected to $\text{supp } \psi$ by a trajectory, $\tilde g$ is
at local equilibrium everywhere, and is thus zero since it solves the
transport equation. We then argue as
in~\eqref{eq:following}-\eqref{eq:split-local}-\eqref{eq:split-global}-\eqref{eq:am}
on $g_k$ with the weight $\chi$ to get
\begin{align}
  \label{eq:estim-cpct}
  \int_0 ^T \int_{\Omega \times \sV}
  g_k(0,x,v)^2
  \cG_t \chi(x,v) \dd \mu \dd t
  \lesssim  \gAvg{g_k}_\chi
  + \left| \int_0 ^T \int_{\Omega \times \sV} \sigma g_k
  \cL g_k \dd \mu \dd t \right|.
\end{align}
We then use~\eqref{eq:nugcc}-\eqref{eq:higher} to find
$B \subset \Omega \times \sV$ small enough and $T$ large
enough so that
\begin{align*}
  \forall \, (x,v) \in \left( \Omega \times \sV\right) \setminus B, \quad
  \left( \int_0 ^T\hat \cS_t ^* \psi_t (x,v) \dd
  t \right) \ge 1 \quad \text{ and } \quad
  \int_B g_k(0,x,v)^2 \dd \mu \le \frac12.
\end{align*}
Then, given this choice of $T$, the RHS of~\eqref{eq:estim-cpct}
goes to zero as $k \to \infty$ since $g_k$ weakly converges to
zero and the dissipation vanishes asymptotically on any time
interval, and thus for $k$ large enough
\begin{align*}
  \int_{\Omega \times \sV} g_k(0,x,v)^2 \dd \mu <1
\end{align*}
which contradicts the assumptions.

\section*{Acknowledgements}

All authors acknowledge partial support from the ERC grant
\textit{MATKIT} grant. HD \& CM acknowledge partial support from the
ERC grant \textit{MAFRAN} and would like to thank the Isaac Newton
Institute for Mathematical Sciences, Cambridge, for support and
hospitality during the programme ``Frontiers in kinetic theory''.
This work was supported by EPSRC grant no EP/R014604/1 and a grant
from the Simons Foundations. HD also acknowledges support from the UK
CDT EPSRC grant \textit{EP/H023348/1} \textit{Cambridge Centre for
  Analysis}, Universit\'e Sorbonne Paris Cit\'e in the framework of
the \textit{``Investissements d'Avenir'' convention ANR-11-IDEX-0005},
and the \textit{People Programme (Marie Curie Actions)} of the
European Union's Seventh Framework Programme (FP7/2007-2013) under REA
grant agreement n.\ PCOFUND-GA-2013-609102, through the PRESTIGE
programme coordinated by Campus France. HH also acknowledges the
support of the EPSRC programme grant \textit{Mathematical fundamentals
  of Metamaterials for multiscale Physics and Mechanics}
(EP/L024926/1).

\bibliographystyle{siam}
\bibliography{lit}

\end{document}